\documentclass[fullpage, 11pt]{article}

\usepackage[english]{babel}
\usepackage{amsfonts}
\usepackage{amssymb}
\usepackage{amsmath}
\usepackage{latexsym}
\usepackage{amsthm}
\usepackage{epsfig}
\usepackage{subfig}
\usepackage{color}

\textwidth 15.5cm \textheight 21.5cm \topmargin 0cm \evensidemargin
0in \oddsidemargin 0in

\newcommand{\bb}{\mathbb}
\newcommand{\conv}{\mathrm{conv}}
\newcommand{\cone}{\mathrm{cone}}
\newcommand{\lin}{\mathrm{lin}}
\newcommand{\R}{\bb R}
\newcommand{\Q}{\bb Q}
\newcommand{\Z}{\bb Z}

\newcommand{\floor}[1]{\lfloor#1\rfloor}
\newcommand{\ceil}[1]{\lceil#1\rceil}
\newcommand{\proj}{\mathrm{proj}}
\newcommand{\intr}{\mathrm{\bf int}}
\newcommand{\relint}{\mathrm{\bf relint}}
\newcommand{\cl}{\mathrm{\bf cl}}
\newcommand{\bd}{\mathrm{\bf bd}}
\newcommand{\ol}{\overline}
\newcommand{\rec}{\mathrm{rec}}

\newcommand{\V}{\mathcal V}
\newcommand{\W}{\mathcal W  }


\newcommand{\old}[1]{{}}

\newtheorem{prop}{Proposition}
\newtheorem{theorem}[prop]{Theorem}
\newtheorem{corollary}[prop]{Corollary}
\newtheorem{lemma}[prop]{Lemma}

\newtheorem{claim}{Claim}

\newtheorem{definition}[prop]{Definition}
\newtheorem{remark}[prop]{Remark}

\newenvironment{pf}{\begin{trivlist} \item[] {\em Proof:}}{\hfill $
\Box$
                       \end{trivlist}}
\newcommand{\sm}{\setminus}

\newcommand{\aff}{\mathrm{aff}}

\def\st{\,|\,}

\old{ \addtolength{\oddsidemargin}{-30pt}
\addtolength{\evensidemargin}{-30pt} \addtolength{\textwidth}{60pt}

\addtolength{\topmargin}{-22pt} \addtolength{\textheight}{32pt} }

\begin{document}
\title{Maximal lattice-free convex sets in linear subspaces}

\author{%
Amitabh Basu \\
Carnegie Mellon University,
abasu1@andrew.cmu.edu\\\\
Michele Conforti \\
Universit\`a di Padova,
conforti@math.unipd.it \\ \\
G\'erard Cornu\'ejols \thanks{
Supported by   NSF grant CMMI0653419,
ONR grant N00014-03-1-0188 and ANR grant BLAN06-1-138894.} \\
Carnegie Mellon University and Universit\'e d'Aix-Marseille \\
gc0v@andrew.cmu.edu \\ \\
Giacomo Zambelli \\
Universit\`a di Padova,
giacomo@math.unipd.it
}

\date{March 30, 2009, Revised April 21, 2010.}

\maketitle

\begin{abstract}
We consider a model that arises in integer programming, and show that all irredundant inequalities are obtained from maximal lattice-free convex sets in an affine subspace. We also show that these sets are polyhedra. The latter result extends a theorem of Lov\'asz characterizing maximal lattice-free convex sets in $\R^n$.
\end{abstract}

\section{Introduction}
The study of maximal lattice-free convex sets dates back to
Minkowski's work on the geometry of numbers. Connections between
integer programming and the geometry of numbers were investigated in
the 1980s starting with the work of Lenstra~\cite{lenstra}. See
Lov\'asz~\cite{Lovasz} for a survey. Recent work in cutting plane
theory~\cite{AndLouWeiWol},\cite{AndLouWei},\cite{ALW1},\cite{ALW2},\cite{AWW},\cite{BBCM},\cite{BorCor},\cite{cm},\cite{Dey-Wol},\cite{DW},\cite{Esp},\cite{go07},\cite{giacomo}
has generated renewed interest in the study of maximal lattice-free
convex sets. In this paper we further pursue this line of research.
In the first part of the paper we consider convex sets in an affine
subspace of $\R^n$ that are maximal with the property of not
containing integral points in their relative interior. When this
affine subspace is rational, these convex sets are characterized by
a result of Lov\'asz~\cite{Lovasz}. The extension to irrational
subspaces appears to be new, and it has already found an application
in the proof of a key result in~\cite{BorCor}. It is also used to
prove the main result in the second part of this paper: We consider
a model that arises in integer programming, and show that all
irredundant inequalities are obtained from maximal lattice-free
convex sets in an affine subspace.
\bigskip

Let $W$ be an affine subspace of $\R^n$. Assume that $W$ contains
an integral point, i.e. $W\cap \Z^n\neq
\emptyset$. We say that a set $B\subset \R^n$ is a {\em maximal
lattice-free convex set in $W$} if $B\subset W$, $B$ is convex, $B$
has no integral point in its interior with respect to the topology
induced on $W$ by $\R^n$, and $B$ is inclusionwise maximal with
these three properties. This definition implies that either $B$
contains no integral point in its relative interior or $B$ has
dimension strictly less than $W$.

The subspace $W$ is said to be {\em rational} if it is generated by
the integral points in $W$. So, if we denote by $V$ the affine hull
of the integral points in $W$, $V=W$ if and only if $W$ is rational.
If $W$ is not rational, then the inclusion $V\subset W$ is strict.
When $W$ is not rational, we will also say that $W$ is {\em
irrational}. An example of an irrational affine subspace $W\subseteq
\R^3$ is the set of points satisfying the equation $x_1 + x_2 +
\sqrt{2}x_3 = 1$. The affine hull $V$ of $W \cap \Z^3$ is the set of
points satisfying the equations $x_1 + x_2 = 1, \; x_3 = 0$.

\begin{theorem}\label{thm:main-intr} Let $W \subset \R^n$ be an affine space containing an integral point and $V$ the affine hull of $W \cap \mathbb{Z}^n$. A set $S\subset W$ is a maximal lattice-free convex set of $W$ if and only if one of the following holds:
\begin{itemize}
\item[(i)] $S$ is a polyhedron in $W$ whose dimension equals $\dim(W)$,
$S\cap V$ is a maximal lattice-free convex set of $V$ whose
dimension equals $\dim(V)$, the facets of $S$ and $S\cap V$ are in
one-to-one correspondence and for every facet $F$ of $S$, $F\cap V$
is the facet of $S\cap V$ corresponding to $F$;
\item[(ii)] $S$ is an hyperplane of $W$ of the form $v+L$, where $v\in S$ and $L\cap V$ is an irrational hyperplane of $V$;
\item[(iii)]  $S$ is a half-space of $W$ that contains $V$ on its boundary.
\end{itemize}
\end{theorem}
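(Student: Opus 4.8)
The plan is to reduce everything to the rational subspace $V$, where Lov\'asz's theorem applies, exploiting two elementary facts. First, since $V=\aff(W\cap\Z^n)$ we have $W\cap\Z^n=V\cap\Z^n$, and, writing $V_0=V-V$ and $W_0=W-W$ for the underlying linear spaces, every integer vector of $W_0$ lies in $V_0$ (take any integer point $z_0\in W$; if $m\in\Z^n\cap W_0$ then $z_0,z_0+m\in W\cap\Z^n\subseteq V$, so $m\in V_0$); consequently any line whose direction lies in $W_0\sm V_0$ meets $\Z^n$ in at most one point. Second, for any convex $S\subseteq W$ with $V\cap\intr(S)\neq\emptyset$ (all interiors are taken in $W$) one has $\intr(S)\cap V=\relint(S\cap V)$ relative to $V$: the inclusion $\subseteq$ is immediate, and $\supseteq$ holds because any point of $\relint(S\cap V)$ is a strict convex combination of a point of $V\cap\intr(S)$ and another point of $S\cap V$. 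I fix a linear complement $U$ with $W_0=V_0\oplus U$ and let $\proj\colon W\to V$ be the affine projection with kernel direction $U$; note $\proj$ fixes every integer point. The proof then splits according to $\dim(S)$ and to whether $V$ meets $\intr(S)$.

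For the forward implication, if $\dim(S)<\dim(W)$ then, since every hyperplane of $W$ has empty interior in $W$ and is therefore lattice-free, maximality forces $S$ to be a hyperplane $H$; maximality also forces both open halfspaces bounded by $H$ to contain integer points, so $H\cap V$ is a hyperplane of $V$, and if it were rational one could extend $H$ to the lattice-free slab $\{x\in W:\gamma\le\phi(x)\le\gamma+1\}$ for an affine functional $\phi$ on $W$ restricting to the primitive integer functional of $H\cap V$, contradicting maximality; hence $H\cap V$ is irrational, which is case (ii). If $\dim(S)=\dim(W)$ and $V\cap\intr(S)=\emptyset$, then separating the affine space $V$ from the open convex set $\intr(S)$ yields a hyperplane containing $V$ with $S$ on one side, and maximality makes $S$ the corresponding halfspace, which is case (iii). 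The reverse implications for (ii) and (iii) are checked directly: in each case $\intr(S)\cap\Z^n=\emptyset$ since all integer points lie in $V\subseteq\bd(S)$, and no proper lattice-free extension exists --- for (iii) enlarging the halfspace immediately puts $V$ in the interior, while for (ii) one checks that any full-dimensional lattice-free $S'\supsetneq H$ forces $S'\cap V$ to be a lattice-free subset of $V$ properly containing the (maximal, by Lov\'asz) irrational hyperplane $H\cap V$, which is impossible.

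The substantial case is $\dim(S)=\dim(W)$ with $V\cap\intr(S)\neq\emptyset$, which must yield (i). Here $S\cap V$ is full-dimensional in $V$ and, by the relative-interior identity, lattice-free in $V$. It is moreover maximal: if some lattice-free $T\subseteq V$ strictly contained $S\cap V$, a short computation gives $\conv(S\cup T)\cap V=T$ (any point of $V$ lying on a segment from a point of $S$ to a point of $T$ with positive weight on the $S$-endpoint forces that endpoint into $V$, hence into $S\cap V\subseteq T$), so $\conv(S\cup T)$ would be lattice-free and strictly contain $S$, a contradiction. Thus $S\cap V$ is a maximal lattice-free convex set of the rational space $V$, and by Lov\'asz's theorem it is a polyhedron each of whose facets contains an integer point in its relative interior.

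It remains to transfer polyhedrality from $S\cap V$ to $S$ and to establish the facet correspondence, and this is where I expect the real difficulty. The goal is the cylinder structure $S=(S\cap V)+U$, equivalently $\proj(S)\subseteq S\cap V$, after which $S=\proj^{-1}(S\cap V)$ is a polyhedron whose facets are exactly $F+U$ for $F$ a facet of $S\cap V$, giving $(F+U)\cap V=F$ as required. The mechanism is that, once $S$ is known to be a polyhedron, each of its facets must (by a Lov\'asz-type argument, using maximality of $S$ in $W$) contain an integer point in its relative interior; but every integer point lies in the $\dim(V)$-dimensional set $V$, while a facet is $(\dim(W)-1)$-dimensional, and any facet direction in $W_0\sm V_0$ is irrational, so by the at-most-one-integer bound the only way an integer point can occupy the relative interior of a facet is for that facet to recede exactly in the $U$-directions, i.e. to be a translate by $U$ of a facet of $S\cap V$ --- forcing the cylinder structure. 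The crux, and the step I expect to absorb most of the work, is proving that $S$ is a polyhedron at all in the irrational setting (the Lov\'asz extension announced in the abstract); granting that, the irrational-line bound forces (i). The reverse implication for (i) is then routine: the relative-interior identity shows $S$ is lattice-free, and any proper lattice-free extension $S'$ would satisfy $S'\cap V=S\cap V$ by maximality in $V$, whereas the cylinder form places an integer point of a facet of $S\cap V$ in the relative interior of the corresponding facet of $S$, hence in $\intr(S')$ after the extension --- a contradiction.
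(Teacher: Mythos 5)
Your treatment of the preliminary cases is sound and runs parallel to the paper's own proof: the setup facts, the derivation of (ii) and (iii) in the forward direction, the reverse checks for (ii) and (iii), and especially the proof that $S\cap V$ is a maximal lattice-free convex set of $V$ (via $\conv(S\cup T)\cap V=T$ and the relative-interior identity) are all correct and essentially the paper's arguments. The proposal fails exactly where you locate the ``real difficulty,'' and it fails in two ways. First, the structural goal you set --- the cylinder structure $S=(S\cap V)+U$, equivalently $S=\proj^{-1}(S\cap V)$ --- is false. Take $n=3$, $W=\{x\in\R^3\st x_3=\sqrt{2}\,x_2\}$, so that $W\cap\Z^3=\{(m,0,0)\st m\in\Z\}$ and $V$ is the $x_1$-axis, and consider the wedge $S=\{x\in W\st x_1+x_2\geq 0,\ x_1-x_2\leq 1\}$. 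This $S$ satisfies every condition in (i): it is a full-dimensional polyhedron of $W$, $S\cap V$ is the segment with endpoints $(0,0,0)$ and $(1,0,0)$ (maximal lattice-free in $V$), and the two facets of $S$ meet $V$ exactly in these two endpoints; maximality can also be checked directly, since any convex set strictly containing $S$ has $(0,0,0)$ or $(1,0,0)$ in its interior. Yet the recession cone of $S$ is a pointed two-dimensional cone, so $S$ is not of the form $C+U$ for any nonzero linear subspace $U$. The mechanism you use to force the cylinder is a non sequitur: ``integral point in the relative interior of a facet'' requires only \emph{one} integral point, so the bound ``an irrational line meets $\Z^n$ at most once'' imposes no constraint --- the wedge's facets have irrational directions and each contains exactly one integral point, sitting in its relative interior. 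Since your reverse implication for (i) also leans on the cylinder form, it too is incomplete, because class (i) contains non-cylinders.

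Second, you explicitly defer the polyhedrality of $S$ (``granting that\dots''), but that is the main content of the theorem --- it \emph{is} the extension of Lov\'asz's theorem the paper is proving --- so there is nothing to cite. The paper obtains polyhedrality and the facet correspondence in a single stroke, from ingredients you already have: let $F_1,\ldots,F_t$ be the facets of $S\cap V$ and pick $z_i\in\relint(F_i)\cap\Z^n$ (Lov\'asz in $V$). By the relative-interior identity, $z_i\notin\intr_W(S)$, so there is a closed half-space $H_i$ of $W$ with $S\subseteq H_i$ and $z_i\notin\intr_W(H_i)$. Each $H_i\cap V$ is then a supporting half-space of the full-dimensional polyhedron $S\cap V$ at a relative-interior point of the facet $F_i$, hence is the facet-defining half-space of $F_i$; consequently $\bigl(\bigcap_{i=1}^t H_i\bigr)\cap V=S\cap V$, so $\bigcap_{i=1}^t H_i$ is lattice-free in $W$, and maximality of $S$ forces $S=\bigcap_{i=1}^t H_i$, a polyhedron. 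Finally, $z_j\in\intr_W\bigl(\bigcap_{i\neq j}H_i\bigr)$ shows that each $H_j$ defines a facet of $S$, and that facet meets $V$ exactly in $F_j$, which is the one-to-one correspondence claimed in (i). In short: rather than first proving $S$ is a polyhedron and then reading off its structure, maximality together with the finitely many points $z_i$ cuts $S$ out as a polyhedron directly; no cylinder structure is needed, and none holds.
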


\begin{figure}[htbp]\begin{center}
\includegraphics[scale=0.65]{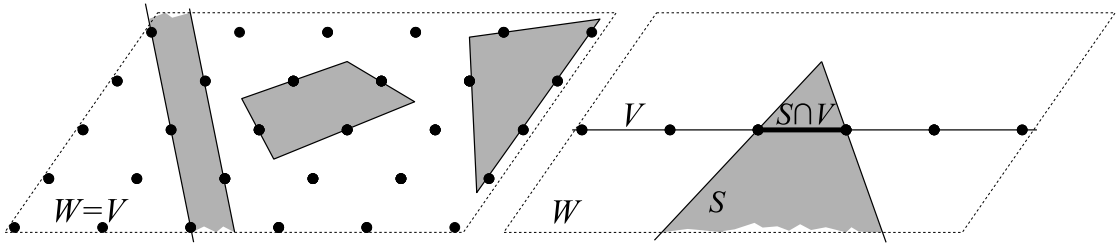}
\caption{\label{fig:2D}
Maximal lattice-free convex sets in a 2-dimensional subspace (Theorem~\ref{thm:main-intr}(i)).}
\end{center}
\end{figure}

A characterization of maximal lattice-free convex sets of $V$, needed in $(i)$ of the previous theorem, is given by the following.

\begin{theorem}\label{thm:lattice-free-intr} {\em (Lov\'asz \cite{Lovasz})} Let $V$ be a rational affine subspace
of $\R^n$ containing an integral point. A set $S\subset V$ is a maximal lattice-free convex set of $V$ if and only if one of the following holds:
\begin{itemize}
\item[(i)] $S$ is a polyhedron of the form $S= P+L$ where $P$ is a polytope,
$L$ is a rational linear space, $\dim(S)=\dim(P)+\dim(L)=\dim(V)$, $S$ does not
contain any integral point in its relative interior and there is an integral point in the relative interior of each facet of $S$;
\item[(ii)] $S$ is an affine hyperplane of $V$ of the form $v+L$, where $v\in S$ and $L$ is an irrational hyperplane of $V$;
\end{itemize}
\end{theorem}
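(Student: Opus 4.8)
The plan is to pass to convenient coordinates and then prove the two implications, the forward (``only if'') direction being the real work. Since $V$ is rational we have $V=\aff(V\cap\Z^n)$, so $V\cap\Z^n$ is a full-rank affine sublattice of the direction space of $V$; hence there is an affine isomorphism $\phi$ carrying $V$ onto $\R^d$, with $d=\dim V$, and $V\cap\Z^n$ onto $\Z^d$. Such a $\phi$ preserves convexity, relative interiors, and inclusions (hence both lattice-freeness and maximality), and it sends rational (resp.\ irrational) affine subspaces, polytopes, and rational linear spaces to objects of the same type. I may therefore assume $V=\R^d$ and characterise the maximal lattice-free convex sets of $\R^d$.

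For \textbf{sufficiency}, suppose first $S=v+L$ with $L$ an irrational hyperplane; write $L=\{x:a^\top x=0\}$ with $a$ not proportional to any rational vector. Then $\intr(S)=\emptyset$, so $S$ is lattice-free, and any convex set strictly containing $S$ is full-dimensional and therefore contains an open slab along one side of $S$. Because $a^\top\Z^d$ is a dense subgroup of $\R$ (and $\Z^d$ projects densely in the directions of $L$), every such slab meets $\Z^d$ in its interior, so $S$ is maximal. Now suppose $S=P+L$ is as in (i). Then $S$ is lattice-free by hypothesis, and if some lattice-free convex $S'\supsetneq S$ existed, a point of $S'\sm S$ would violate one facet inequality $a_i^\top x\le b_i$ of $S$; since the corresponding blocking point $z_i\in\relint(F_i)$ satisfies $a_i^\top z_i=b_i$ while lying strictly inside all other facet inequalities, the extra slack provided by $S'$ would put $z_i$ in $\intr(S')$, contradicting lattice-freeness. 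Hence $S$ is maximal.

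For \textbf{necessity}, let $S$ be maximal lattice-free in $\R^d$. Since $\cl(S)$ has the same interior as $S$ it is again lattice-free, so maximality gives $S=\cl(S)$. If $\dim S<d$ then $\intr(S)=\emptyset$, so $\aff(S)$ is lattice-free and contains $S$; maximality forces $S=\aff(S)$, an affine subspace, and applying the same reasoning to any hyperplane containing it forces $\dim S=d-1$. Moreover $S$ cannot be a rational hyperplane: such a hyperplane can be thickened to the lattice-free slab bounded by its two neighbouring integral translates, strictly enlarging $S$. This yields case (ii).

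It remains to treat the full-dimensional case, which is where the work lies. Writing $Z=\bd(S)\cap\Z^d$, every $z\in Z$ admits a supporting halfspace of $S$, and the crux is to show that finitely many of these already cut out $S$, so that $S$ is a polyhedron; this is the step I expect to be the main obstacle, and the natural tool is the Doignon--Bell--Scarf theorem on integer-infeasible systems, which bounds the number of facets by $2^d$ and lets maximality identify $S$ with the resulting polyhedron. Granting polyhedrality, maximality yields an integral point in the relative interior of each facet, since an unblocked facet could be translated outward without meeting $\Z^d$. The final structural point is that $\rec(S)$ is a rational linear subspace $L$. First, $\rec(S)$ is a subspace: a recession cone that is not linear would have a pointed part, which is excluded because a $2$-dimensional pointed cone encloses arbitrarily large balls (hence interior lattice points), while a $1$-dimensional pointed recession would leave the capping facet without a blocking integral point, contradicting the previous step. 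Second, $L$ is rational: if $L$ contained a vector $r$ spanning an irrational line, then $\intr(S)$, being invariant under translation by $\R r\subseteq L$, would be invariant under a subset of $\Z^d$ dense modulo $\R r$, forcing $\intr(S)\cap\Z^d\neq\emptyset$. Hence $S=(S\cap L^{\perp})+L$ with $S\cap L^{\perp}$ a polytope $P$ (its recession cone being $\rec(S)\cap L^{\perp}=\{0\}$) and $\dim P+\dim L=d$, which is exactly case (i).
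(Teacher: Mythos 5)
Your reduction to $(\R^d,\Z^d)$, both sufficiency arguments, and the lower-dimensional half of necessity are sound (for the standard lattice, density of the subgroup $a^\top\Z^d$ in $\R$ is a clean substitute for the paper's Lemma~\ref{lemma:irrat-space}). All of the real difficulty, however, sits in the full-dimensional case, and there your proposal has gaps at every step. Polyhedrality is not proved: you explicitly defer it (``the step I expect to be the main obstacle'') and point to Doignon--Bell--Scarf without carrying out the argument; an argument along those lines does exist in the literature (Averkov), but it is not in your text, and the paper itself needs no such tool --- for bounded $S$ a box plus finiteness of the lattice points in it suffices (Lemma~\ref{lemma:bounded}), while unbounded $S$ is handled by first establishing the lineality structure and projecting down to the bounded case. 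Your facet-blocking step is likewise unjustified in the unbounded setting: ``an unblocked facet could be translated outward without meeting $\Z^d$'' is exactly what must be shown, and by maximality \emph{every} outward translation does capture interior lattice points; the paper's fix is to retract the pushed facet to the captured lattice point minimizing $\alpha_t x$, and that minimizer exists only because boundedness gives finitely many candidates --- for an unbounded facet with irrational normal the infimum need not be attained, which is precisely why the paper proves blocking only after reducing to the bounded case.

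The structural claims about $\rec(S)$ are not just gaps but errors. The assertion that a $2$-dimensional pointed recession cone ``encloses arbitrarily large balls'' is false for $d\geq 3$: it yields large $2$-dimensional disks inside a plane, which may be an irrational plane containing no lattice points at all, so no interior lattice point follows. The $1$-dimensional claim (a pointed recession ray forces its capping facet to be unblocked) is true, but proving it is exactly where the paper uses Dirichlet's theorem, via Lemma~\ref{lemma:half-line} (lattice points come within $\varepsilon$ of any half-line emanating from a lattice point) and Lemma~\ref{lemma:recession-cone}; you never state or prove anything of this kind. Finally, your rationality argument for $L$ is unsound: it claims that if $\intr(S)$ is invariant under an irrational line $\R r$ then $\intr(S)\cap\Z^d\neq\emptyset$, but the split $S=\{x\in\R^3 \st 0\leq x_3\leq 1\}$ is invariant under $\R(1,\sqrt{2},0)$ and is lattice-free; you are conflating ``$L$ contains an irrational line'' with ``$L$ is irrational'' (every rational subspace of dimension at least $2$ contains irrational lines). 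The property actually needed is that $L$ as a whole is spanned by lattice vectors, and the paper's proof of this (Claim~2 in the proof of Theorem~\ref{thm:lattice-free}) cannot conclude by exhibiting an interior lattice point --- none need exist; instead it constructs the subspace $U$ spanned by arbitrarily short projections of lattice vectors, shows the further projection $Q$ is lattice-free, and contradicts maximality because $S\subseteq Q+L+U$ would strictly enlarge the lineality space.
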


The polyhedron $S= P+L$ in Theorem~\ref{thm:lattice-free-intr}(i) is
called a {\em cylinder over the polytope} $P$ and can be shown to
have at most $2^{\dim(P)}$ facets \cite{doignon}.

Theorem~\ref{thm:main-intr} is new and it is used in the proof of
our main result about integer programming,
Theorem~\ref{thm:min-ineq-intr} below. It is also used to prove the
last theorem in~\cite{BorCor}. Theorem~\ref{thm:lattice-free-intr}
is due to Lov\'asz (\cite{Lovasz} Proposition 3.1). Lov\'asz only
gives a sketch of the proof and it is not clear how case (ii) in
Theorem~\ref{thm:lattice-free-intr} arises in his sketch or in the
statement of his proposition. Therefore in
Section~\ref{sec:max-conv} we will prove both theorems.

Figure~\ref{fig:2D} shows examples of maximal lattice-free convex
sets in a 2-dimensional affine subspace $W$ of $\R^3$. We denote by
$V$ the affine space generated by $W\cap \Z^3$. In the first picture
$W$ is rational, so $V=W$, while in the second one $V$ is a subspace
of $W$ of dimension 1.\medskip

We now give an example of Theorem~\ref{thm:main-intr}(ii). Let
$W\subseteq \R^4$ be the set of points satisfying the equation $x_1
+ x_2 + x_3 + \sqrt{2}x_4 = 1$. The affine hull $V$ of $W \cap \Z^4$
is the set of points satisfying the equations $x_1 + x_2 + x_3 = 1,
\; x_4 = 0$. The set $S \subset W$ defined by the equations $x_1+x_2+x_3+\sqrt{2}x_4 =1$,
 $x_1 +\sqrt{2}x_2 = 1$ satisfies
Theorem~\ref{thm:main-intr}(ii). Indeed, $\dim(W) = \dim(S) + 1 =
3$. Furthermore, $\dim(V) = 2$ and $S\cap V$ is the line satisfying
the equations $x_1 + x_2 + x_3 = 1,\; x_1 + \sqrt{2}x_2 = 1,\; x_4 =
0$ and it is an irrational subspace since the only integral point it
contains is $(1,0,0,0)$.
\bigskip

Next we highlight the relation between lattice-free convex sets and
valid inequalities in integer programming. This was first observed
by Balas~\cite{bal1}.

Suppose we consider $q$ rows of the optimal tableau of the LP
relaxation of a given MILP, relative to $q$ basic integer variables
$x_1,\ldots, x_q$. Let $s_1,\ldots, s_k$ be the nonbasic variables,
and $f\in\R^q$ be the vector of components of the optimal basic
feasible solution. The tableau restricted to these $q$ rows is of
the form
$$x=f+\sum_{j=1}^k r^js_j,\quad x\geq 0\mbox{ integral},\, s\geq 0,\, \mbox { and } s_j\in \Z, j\in I,$$
where $r^j\in\R^q$, $j=1,\ldots, k$, and $I$ denotes the set of integer nonbasic variables. Gomory \cite{gom69} proposed to consider the relaxation of the above problem obtained by dropping the nonnegativity conditions $x\geq 0$. This gives rise to the so called {\em corner polyhedron}. A further relaxation is obtained by also dropping the integrality conditions on the nonbasic variables, obtaining the mixed-integer set
$$x=f+\sum_{j=1}^k r^js_j,\; x\in\Z^q,\, s\geq 0.$$
Note that, since $x\in\R^q$ is completely determined by $s\in\R^k$, the above is equivalent to
\begin{equation}\label{eq:Rf-finite}f+\sum_{j=1}^k r^js_j\in\Z^q,\quad s\geq 0.\end{equation}
We denote by $R_f(r^1,\ldots,r^k)$ the set of points $s$ satisfying
(\ref{eq:Rf-finite}). The above relaxation was studied by~Andersen
et al.~\cite{AndLouWeiWol} in the case of two rows and Borozan and
Cornu\'ejols~\cite{BorCor} for the general case. In these papers
they showed that the irredundant valid inequalities
for~$R_f(r^1,\ldots,r^k)$ correspond to maximal lattice-free convex
sets in $\R^q$. In~\cite{AndLouWeiWol,BorCor} data are assumed to be
rational. Here we consider the case were $f, r^1,\ldots,r^k$ may
have irrational entries.

Let $W=\langle r^1,\ldots,r^k\rangle$ be the linear space generated
by $r^1,\ldots, r^k$. Note that, for every $s\in
R_f(r^1,\ldots,r^k)$, the point
$f+\sum_{j=1}^kr^js_j\in(f+W)\cap\Z^q$, hence we assume $f+W$
contains an integral point. Let $V$ be the affine hull of $(f+W)\cap
\Z^q$. Notice that $f+W$ and $V$ coincide if and only if $W$ is a
rational space. Borozan and Cornu\'ejols \cite{BorCor} proposed to
study the following {\em semi-infinite relaxation}, which is a
special case of Gomory and Johnson's group problem~\cite{gj}. Let
$R_f(W)$ be the set of points $s=(s_r)_{r\in W}$ of $\R^{W}$
satisfying
\begin{eqnarray}\label{eq:Rf}
&&f+\sum_{r\in W} r s_r\in \Z^q\nonumber\\
&&s_r\geq 0,\quad r\in W\\
&&s \in \mathcal W\nonumber
\end{eqnarray}
where $\mathcal W$ is the set of all $s\in\R^{W}$ with
{\em finite support}, i.e. the set $\{r\in W\st s_r>0\}$ has finite cardinality. Notice that $R_f(r^1,\ldots,r^k)=R_f(W)\cap\{s\in\W\st s_r=0\mbox{ for all } r\neq r^1,\ldots,r^k\}$.\medskip

Given a function $\psi\,:\,W\rightarrow \R$ and $\alpha\in\R$, the linear inequality
\begin{equation}\label{eq:valid}\sum_{r\in W}\psi(r)s_r\geq\alpha\end{equation}
is {\em valid for $R_f(W)$} if it is satisfied by every $s\in R_f(W)$.

Note that, given a valid inequality (\ref{eq:valid}) for $R_f(W)$, the inequality
$$\sum_{j=1}^k \psi(r^j)s_j\geq \alpha$$ is valid for $R_f(r^1,\ldots,r^k)$.
Hence a characterization of valid linear inequalities for $R_f(W)$
provides a characterization of valid linear inequalities for
$R_f(r^1,\ldots,r^k)$.
\medskip

Next we observe how maximal lattice-free convex sets in $f+W$ give
valid linear inequalities for $R_f(W)$. Let $B$ be a maximal
lattice-free convex set in $f+W$ containing $f$ in its interior.
Since, by Theorem~\ref{thm:main-intr}, $B$ is a polyhedron and since
$f$ is in its interior, there exist $a_1,\ldots,a_t\in\R^q$ such
that $B=\{x\in f+W\st a_i(x-f)\leq 1,\, i=1\ldots, t\}$. We define
the function $\psi_B\,:\, W\rightarrow \R$ by
\begin{equation}
\label{eq:psi_B}\psi_B(r)=\max_{i=1,\ldots,t}a_ir.
\end{equation} Note that the function $\psi_B$ is {\em subadditive}, i.e.
$\psi_B(r)+\psi_B(r')\geq \psi_B(r+r')$, and {\em positively
homogeneous}, i.e. $\psi_B(\lambda r)=\lambda\psi_B(r)$ for every
$\lambda\geq 0$. We claim that $$\sum_{r\in W}\psi_B(r)s_r\geq 1$$
is valid for $R_f(W)$.

Indeed, let $s\in R_f(W)$, and $x=f+\sum_{r\in W} rs_r$. Since
$x\in\Z^n$ and $B$ is lattice-free, $x\notin\intr(B)$. Then
$$\sum_{r\in W}\psi_B(r)s_r=\sum_{r\in
W}\psi_B(rs_r)\geq\psi_B(\sum_{r\in W}rs_r)=\psi_B(x-f)\geq 1,$$
where the first equation follows from positive homogeneity, the
first inequality follows from subadditivity of $\psi_B$ and the last
one follows from the fact that $x\notin\intr(B)$.

We will show that all nontrivial irredundant valid linear
inequalities for $R_f(W)$ are indeed of the type described above.
Furthermore, if $W$ is irrational, we will see that $R_f(W)$ is
contained in a proper affine subspace of $\W$, so each inequality
has infinitely many equivalent forms. Note that, by definition of
$\psi_B$, $\psi_B(r)>0$ if $r$ is not in the recession cone of $B$,
$\psi_B(r) < 0$ when $r$ is in the interior of the recession cone of
$B$, while $\psi_B(r)=0$ when $r$ is on the boundary of the
recession cone of $B$. We will show that one can always choose a
form of the inequality so that $\psi_B$ is a nonnegative function.
We make this more precise in the next theorem.
\bigskip

Given a point $s\in R_f(W)$, then $f+\sum_{r\in W} r s_r\in \Z^q\cap
(f+W)$. Recall that we denote by $V$ the affine hull of $\Z^q\cap
(f+W)$. Thus $R_f(W)$ is contained in the affine subspace $\mathcal
V$ of $\mathcal W$ defined as $$\mathcal V=\{s\in\mathcal W\st
f+\sum_{r\in W} r s_r\in V\}.$$ Observe that, given
$C\in\R^{\ell\times q}$ and $d\in\R^\ell$ such that $V=\{x\in f+W\st
Cx=d\}$, we have \begin{equation}\label{eq:V}\V=\{s\in\W\st
\sum_{r\in W}(Cr)s_r=d-Cf\}.\end{equation}

Given two valid inequalities $\sum_{r\in W} \psi(r)s_r\geq \alpha$
and $\sum_{r\in W} \psi'(r)s_r\geq \alpha'$ for $R_f(W)$, we say
that they are {\em equivalent} if there exist $\rho>0$ and
$\lambda\in\R^\ell$ such that $\psi(r)=\rho\psi'(r)+\lambda^T Cr$
and $\alpha=\rho\alpha'+\lambda^T(d-Cf)$. Note that, if two valid
inequalities $\sum_{r\in W} \psi(r)s_r\geq \alpha$ and $\sum_{r\in
W} \psi'(r)s_r\geq \alpha'$ for $R_f(W)$ are equivalent, then
$\mathcal V \cap \{s \st \sum_{r\in W} \psi(r)s_r\geq \alpha\} =
\mathcal V \cap \{s \st \sum_{r\in W} \psi'(r)s_r\geq \alpha\}.$

A linear inequality $\sum_{r\in W}\psi(r)s_r\geq \alpha$ that is
satisfied by every element in $\{s\in \mathcal V\st s_r\geq 0\mbox{
for every }r\in W\}$ is said to be {\em trivial}.

We say that inequality $\sum_{r\in W}\psi(r)s_r\geq \alpha$ {\em
dominates} inequality $\sum_{r\in W}\psi'(r)s_r\geq \alpha$ if
$\psi(r) \leq \psi'(r)$ for all $r\in W$. Note that, for any $\bar
s\in\W$ such that $\bar s_r \geq 0$ for all $r \in W$, if $\bar s$
satisfies the first inequality, then $\bar s$ also satisfies the
second. A valid inequality $\sum_{r\in W}\psi(r)s_r\geq \alpha$ for
$R_f(W)$ is {\em minimal} if it is not dominated by any valid linear
inequality $\sum_{r\in W}\psi'(r)s_r\geq\alpha$ for $R_f(W)$ such
that $\psi'\neq\psi$. It is not obvious that nontrivial valid linear
inequalities are dominated by minimal ones. We will show that this
is the case. Note that it is not even obvious that minimal valid
linear inequalities exist.

We will show that, for any maximal lattice-free convex set $B$ of
$f+W$ with $f$ in its interior, the inequality $\sum_{r\in
W}\psi_B(r)s_r\geq 1$ is a minimal valid inequality for $R_f(W)$.
The main result is a converse, stated in the next
theorem. We need the notion of equivalent inequalities, which define
the same region in $\mathcal{V}$.

\begin{theorem}\label{thm:min-ineq-intr}
Every nontrivial valid linear inequality for $R_f(W)$ is dominated by a nontrivial minimal valid linear inequality for $R_f(W)$. \\
Every  nontrivial minimal valid linear inequality for $R_f(W)$ is equivalent to an inequality of the form $$\sum_{r \in W} \psi_B(r)s_r\geq 1$$ such that $\psi_B(r)\geq 0$ for all $r\in W$ and $B$ is a maximal lattice-free convex set in $f+W$ with $f$ in its interior.
\end{theorem}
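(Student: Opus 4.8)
The plan is to handle the two assertions separately, both resting on a dictionary between \emph{sublinear} valid functions and lattice-free convex sets, where Theorem~\ref{thm:main-intr} supplies polyhedrality. Call $\psi\colon W\to\R$ sublinear if it is positively homogeneous and subadditive, and to such a $\psi$ associate $B_\psi=\{x\in f+W\st\psi(x-f)\le 1\}$. Sublinearity makes $B_\psi$ closed and convex, and since $\psi$ is real-valued on the finite-dimensional space $W$ it is continuous, so the interior of $B_\psi$ is $\{x\st\psi(x-f)<1\}$ and $f\in\intr(B_\psi)$. The equivalence I need is that $\psi$ is valid for $R_f(W)$ with right-hand side $1$ if and only if $B_\psi$ is lattice-free: if some $x\in\Z^q\cap\intr(B_\psi)$, then the point $s\in\W$ with $s_{x-f}=1$ and all other entries $0$ lies in $R_f(W)$ and violates $\sum_r\psi(r)s_r\ge 1$, while the converse is exactly the subadditivity-and-homogeneity computation given before the statement of Theorem~\ref{thm:min-ineq-intr}. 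Moreover, when $\psi\ge 0$ one checks directly that $\psi=\psi_{B_\psi}$ in the sense of (\ref{eq:psi_B}).

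For the second assertion I would start from a nontrivial minimal valid inequality and first normalize its right-hand side: one checks the right-hand side is positive, and scaling is an equivalence, so I may assume $\alpha=1$. The crucial step is that minimality forces $\psi$ to be sublinear. This follows by the ``replace by a smaller valid function'' method: the minorants $r\mapsto\inf_{\lambda>0}\psi(\lambda r)/\lambda$ and $r\mapsto\inf\{\sum_i\psi(r_i)\st\sum_i r_i=r\}$ are each $\le\psi$ and are \emph{still valid}, because rescaling or splitting the directions in the support of a feasible $s$ leaves the image $f+\sum_r rs_r\in\Z^q$ unchanged; minimality then forces equality, yielding positive homogeneity and subadditivity. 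Setting $B=B_\psi$, the dictionary makes $B$ lattice-free with $f$ in its interior, and Theorem~\ref{thm:main-intr} makes it a polyhedron. I would show $B$ is maximal by contraposition: if some facet of $B$ failed the integral-point condition of Theorem~\ref{thm:main-intr}(i), pushing that facet outward gives a strictly larger lattice-free convex set whose gauge, after the linear adjustment described next, dominates and differs from $\psi$ within its equivalence class, contradicting minimality. Finally, the gauge identity gives $\psi_B=\max(\psi,0)$, so $\psi$ and $\psi_B\ge 0$ agree except possibly on the interior of $\rec(B)$; there, by the description of $V$ and Theorem~\ref{thm:main-intr}(ii)--(iii), their difference is a linear functional vanishing on the direction of $V$, hence of the form $\lambda^TCr$. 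Thus $\psi$ is equivalent to $\psi_B$, as required.

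For the first assertion I would invoke Zorn's lemma. Fixing the right-hand side $\alpha$ of a given nontrivial valid inequality $(\psi_0,\alpha)$, let $\mathcal F$ be the set of valid functions $\psi\le\psi_0$, ordered so that pointwise-smaller functions are larger; a maximal element is then a minimal valid inequality dominating $(\psi_0,\alpha)$, which the second assertion identifies with an inequality of the required form. For a chain in $\mathcal F$ the natural upper bound is the pointwise infimum $\psi^*$, and $\psi^*$ is valid: on the finite support of any $s\in R_f(W)$, total ordering of the chain lets me pick a single member within $\varepsilon$ of $\psi^*$ simultaneously at all support directions, so the inequality survives in the limit. The main obstacle is proving that $\psi^*$ is \emph{real-valued} --- a priori a chain could drive $\psi$ to $-\infty$ in some direction, and this is genuinely possible in the recession directions that appear when $W$ is irrational, where validity alone imposes no lower bound. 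I expect to resolve this by reducing at the outset to \emph{nonnegative} valid functions modulo equivalence, using the linear freedom $\lambda^TCr$ --- precisely the freedom encoded in the affine subspace $\mathcal V$ of (\ref{eq:V}) --- to absorb the negative part in those directions; once every function in $\mathcal F$ is bounded below by $0$, the infimum is automatically finite, Zorn applies, and the resulting minimal inequality is of the stated form.
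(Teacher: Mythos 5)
Your dictionary between sublinear valid functions and the sets $B_\psi$, and your idea that minimality forces sublinearity via ``replace by a smaller valid function,'' do match the paper's Lemma~\ref{lemma:sublinear} (modulo finiteness of your two infima, which needs an argument like Claim~1 in that lemma's proof). But the core of your argument for the second assertion breaks down at two places. First, you set $B=B_\psi$ and invoke Theorem~\ref{thm:main-intr} to conclude that $B$ is a polyhedron; that theorem describes only \emph{maximal} lattice-free sets, and maximality of $B_\psi$ is precisely what you are trying to establish, so the step is circular. The paper gets polyhedrality differently: it embeds $B_\psi$ into some maximal lattice-free $S$ via Corollary~\ref{cor:Bexists}, uses the facet structure of $S$ guaranteed by Theorem~\ref{thm:main} to build a dominating valid function $\psi'$ with $B_{\psi'}$ a polyhedron satisfying $\rec(B_{\psi'}\cap V)=\lin(B_{\psi'}\cap V)$ (Claim~\ref{claim:Bpoly}), and only then lets minimality bite. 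Second, your facet-pushing argument and your identity $\psi_B=\max(\psi,0)$ both founder on exactly the recession-cone phenomenon you flag in your last paragraph: when $\psi$ takes negative values, enlarging the set does \emph{not} yield a pointwise-smaller function (containment of $1$-sublevel sets controls the function only where it is positive), and $\psi_B=\max_i a_i r$ is strictly negative on the interior of $\rec(B)$ while $\max(\psi,0)$ vanishes there, so the identity is false; moreover the difference $\psi_B-\psi$ is not linear, so asserting it has the form $\lambda^T Cr$ is assuming the theorem's conclusion. Producing $\lambda$ with $\psi(r)+\lambda^T Cr\geq 0$ is the technical heart of the paper --- Claim~\ref{claim:lambda}, a Farkas/LP-duality argument that is only available \emph{after} the condition $\rec(B_\psi\cap V)=\lin(B_\psi\cap V)$ has been secured by Claim~\ref{claim:Bpoly} --- and your proposal contains no substitute for it.

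For the first assertion, your Zorn's-lemma scheme is a genuinely different route, but it inherits the same missing step: you correctly identify that chain infima may fail to be real-valued in recession directions, and you propose to fix this by first passing to nonnegative functions ``using the linear freedom $\lambda^T Cr$'' --- which is again the unproven Claims~\ref{claim:Bpoly}--\ref{claim:lambda}. There is also a residual logical gap even granting that reduction: a maximal element of your poset of \emph{nonnegative} valid functions is only undominated within that class, and you would still need an argument that it is not dominated by a valid function taking negative values. The paper needs no transfinite argument at all: once a nonnegative sublinear equivalent form $\psi''$ exists, any maximal lattice-free $B\supseteq B_{\psi''}$ satisfies $\psi_B\leq\psi''$ (Claim~\ref{claim:psiB}, where nonnegativity of $\psi''$ is essential), and $\Psi_B(s)\geq 1$ is \emph{already} minimal by Lemma~\ref{rem:psi_rho}; the minimal dominating inequality is thus exhibited explicitly, and both assertions of the theorem drop out of the same chain. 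In short, the parts of your proposal that work are the parts that coincide with the paper, and the parts that diverge (direct maximality of $B_\psi$, the $\max(\psi,0)$ identity, Zorn) either fail or silently presuppose the one construction you have not supplied.
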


This theorem generalizes earlier results of Borozan and
Cornu\'ejols~\cite{BorCor}.  In their setting it is immediate that
all valid linear inequalities are of the form $\sum_{r\in
W}\psi(r)s_r\geq 1$ with $\psi$ nonnegative. From this, it follows
easily that $\psi$ must be equal to $\psi_B$ for some maximal
lattice-free convex set $B$. The proof is much more complicated for
the case of $R_f(W)$ when $W$ is an irrational space. In this case,
valid linear inequalities might have negative coefficients. For
minimal inequalities, however, Theorem~\ref{thm:min-ineq-intr} shows
that there always exists an equivalent one where all coefficients
are nonnegative. The function $\psi_B$ is nonnegative if and only if
the recession cone of $B$ has empty interior. Although there are
nontrivial minimal valid linear inequalities arising from maximal
lattice-free convex sets whose recession cone is full dimensional,
Theorem~\ref{thm:min-ineq-intr} states that there always exists a
maximal lattice-free convex set whose recession cone is not full
dimensional that gives an equivalent inequality. A crucial
ingredient in showing this is a new result about sublinear functions
proved in~\cite{BaCoZa}.
\bigskip

In light of Theorem~\ref{thm:min-ineq-intr}, it is a natural question to
ask what is the subset of $\W$ obtained by intersecting the set of
nonnegative elements of $\V$ with all half-spaces defined by
inequalities $\sum_{r\in W} \psi(r)s_r\geq 1$ as in  Theorem~\ref{thm:min-ineq-intr}. In a
finite dimensional space, the intersection of all half-spaces
containing a given convex set $C$ is the closure of $C$. Things are
more complicated in infinite dimension. First of all, while in
finite dimension all norms are topologically equivalent, and thus
the concept of closure does not depend on the choice of a specific
norm, in infinite dimension different norms may produce different
topologies. Secondly, in finite dimensional spaces linear functions
are always continuous, while in infinite dimension there always
exist linear functions that are not continuous. In particular,
half-spaces (i.e. sets of points satisfying a linear inequality) are
not always closed in infinite dimensional spaces (see
Conway~\cite{Con} for example).

To illustrate this, note that if $\mathcal{W}$ is endowed
with the Euclidean norm, then $\mathbf{0}=(0)_{r\in W}$ belongs to
the closure of $\conv(R_f(W))$ with respect to this norm, as shown
next. Let $\bar x$ be an integral point in $f+W$
and let $\bar s$ be defined by
$$\bar s_r=\left\{\begin{array}{ll}
\frac 1 k & \mbox{ if } r=k(\bar x-f),\\
0 & \mbox{ otherwise}.
\end{array}
\right.
$$
Clearly, for every choice of $k$, $\bar s\in R_f(W)$, and for $k$ that goes to infinity the point $\bar s$ is
arbitrarily close to $\mathbf{0}$ with respect to the Euclidean

distance. Now, given a valid linear inequality $\sum_{r\in W} \psi(r)s_r\geq 1$
for $\conv(R_f(W))$, since $\sum_{r\in W} \psi(r)0=0$ the hyperplane
${\mathcal H}=\{s\in\W\,:\, \sum_{r\in W} \psi(r)s_r=1\}$ separates strictly
$\conv(R_f(W))$ from $\mathbf{0}$ even though $\mathbf{0}$ is in the closure of $\conv(R_f(W))$. This implies that ${\mathcal H}$ is not a closed
hyperplane of $\W$, and in particular the function $s\mapsto \sum_{r\in W} \psi(r)s_r$ is not
continuous with respect to the
Euclidean norm on $\W$.\medskip

A nice answer to our question is given by considering
a different norm on  $\W$. We endow $\W$ with
the norm $\|\cdot \|_H$ defined by
$$\|s\|_H=|s_0|+\sum_{r\in W\backslash\{0\}} \|r\| |s_r|.$$
It is straightforward to show that $\|\cdot\|_H$ is indeed a norm. Given $A\subset\W$, we denote by $\bar A$ the closure of
$A$ with respect to the norm $\|\cdot\|_H$.
\smallskip

Let $\mathcal{B}_W$ be the family of all maximal lattice-free
convex sets of $W$ with $f$ in their interior.

\begin{theorem}\label{thm:closure}
$$\ol\conv(R_f(W))=\left\{s\in\V\st \begin{array}{ll}\sum_{r\in W}\psi_B(r)s_r\ge 1 &B\in\mathcal{B}_W\\
s_r\geq 0\, & r\in W \end{array}\right\}.$$
\end{theorem}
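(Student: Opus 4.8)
The plan is to prove Theorem~\ref{thm:closure} via the standard duality
principle that the closed convex hull of a set equals the intersection of
all closed half-spaces containing it, but carried out carefully in the
normed space $(\W,\|\cdot\|_H)$ where linear functionals need not be
continuous. Call the right-hand side $Q$. The inclusion
$\ol\conv(R_f(W))\subseteq Q$ is the easy direction: each constraint
$\sum_{r\in W}\psi_B(r)s_r\ge 1$ is a valid inequality for $R_f(W)$ by the
computation already given in the excerpt, and $\V$ is closed and contains
$R_f(W)$; so I must only check that each defining functional of $Q$ is
\emph{continuous} with respect to $\|\cdot\|_H$, so that $Q$ is genuinely
closed and hence contains the closure. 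This is where the choice of norm
pays off: for $s\in\V$ the nonnegativity constraints force $R_f(W)$ into
the nonnegative orthant, and by Theorem~\ref{thm:min-ineq-intr} I may take
the representative with $\psi_B\ge 0$; then $|\sum_r\psi_B(r)s_r|\le
\sum_r\psi_B(r)|s_r|\le c\,\|s\|_H$ using positive homogeneity of $\psi_B$
and the bound $\psi_B(r)\le c\|r\|$ coming from $\psi_B(r)=\max_i a_ir$.
Thus each half-space in $Q$ is closed in $\|\cdot\|_H$, and $Q$ being an
intersection of closed sets is closed, giving $\ol\conv(R_f(W))\subseteq Q$.

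The reverse inclusion $Q\subseteq\ol\conv(R_f(W))$ is the heart of the
matter. I would argue by contraposition: take $\bar s\in\V$ with
$\bar s_r\ge 0$ for all $r$ but $\bar s\notin\ol\conv(R_f(W))$, and produce
a set $B\in\mathcal B_W$ whose inequality $\bar s$ violates. Since
$\ol\conv(R_f(W))$ is a closed convex set in the normed space
$(\W,\|\cdot\|_H)$ and $\bar s$ lies outside it, a Hahn--Banach separation
argument yields a \emph{continuous} linear functional strictly separating
$\bar s$ from $\ol\conv(R_f(W))$. The continuity is essential and is
exactly what the $\|\cdot\|_H$ norm buys me: a continuous functional on
$\W$ in this norm is represented by a function $\psi:W\to\R$ that is
bounded on the $\|\cdot\|$-unit sphere of $W$, i.e. $|\psi(r)|\le c\|r\|$,
which is precisely the growth allowed for functions of the form
$\psi_B$. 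So separation produces a valid linear inequality
$\sum_{r\in W}\psi(r)s_r\ge\alpha$ for $R_f(W)$ that $\bar s$ violates,
and after normalizing (and discarding the trivial case, handling
$\alpha\le 0$ separately) I may assume $\alpha=1$ and that the inequality
is nontrivial.

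From here I invoke Theorem~\ref{thm:min-ineq-intr}: the nontrivial valid
inequality just produced is dominated by a nontrivial minimal one, which
in turn is equivalent to an inequality $\sum_{r\in W}\psi_B(r)s_r\ge 1$
with $\psi_B\ge 0$ and $B\in\mathcal B_W$. I must then check that passing
to a dominating/equivalent inequality preserves the strict violation at
$\bar s$: domination $\psi_B\le\psi$ together with $\bar s\ge 0$ preserves
the direction of violation, and equivalence only changes the inequality by
a term $\lambda^TCr$ which, by \eqref{eq:V}, is constant on $\V$, so it
leaves the value of the constraint at $\bar s\in\V$ unchanged up to the
matching shift in $\alpha$. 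Hence $\bar s$ violates a constraint defining
$Q$, so $\bar s\notin Q$, completing the contrapositive.

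The main obstacle I expect is the separation step in infinite dimension
and the bookkeeping that turns an abstract continuous separating
functional into an honest $\psi$ of the right form. In particular one must
verify that the closed convex set $\ol\conv(R_f(W))$ together with the
singleton $\{\bar s\}$ satisfies the hypotheses of a Hahn--Banach
separation theorem in the normed space $(\W,\|\cdot\|_H)$, and that the
resulting functional's representing function obeys the linear growth bound
so that it is of the admissible type; the delicate point is excluding the
pathology, illustrated in the excerpt with $\mathbf 0$ and the Euclidean
norm, where a separating hyperplane fails to be closed. The rest of the
argument is essentially an application of
Theorem~\ref{thm:min-ineq-intr} plus the routine checks that domination
and equivalence interact correctly with evaluation at a nonnegative point
of $\V$.
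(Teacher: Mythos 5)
Your proposal is correct and follows essentially the same route as the paper's proof: ``$\subseteq$'' by showing each constraint defining the right-hand side is closed in $(\W,\|\cdot\|_H)$ via a linear growth bound on $\psi_B$ (this is the paper's Lemma~\ref{lemma:Psi}), and ``$\supseteq$'' by Hahn--Banach separation of $\bar s$ from $\ol\conv(R_f(W))$ followed by Theorem~\ref{thm:min-ineq-intr}, with your explicit checks --- nontriviality of the separating inequality (because $\bar s$ itself is a nonnegative point of $\V$ violating it), and preservation of the violation under domination (using $\bar s\geq 0$) and under equivalence (since the correction term has constant value on $\V$) --- being exactly the details the paper leaves implicit. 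One local slip to repair in ``$\subseteq$'': you cannot ``take the representative with $\psi_B\geq 0$'', since the constraints range over \emph{all} $B\in\mathcal{B}_W$, many $\psi_B$ take negative values, and some of these constraints may even be trivial (so Theorem~\ref{thm:min-ineq-intr} need not supply a nonnegative equivalent for them); but no such reduction is needed, because $|\psi_B(r)|\leq \bigl(\max_i\|a_i\|\bigr)\|r\|$ holds for any $\psi_B(r)=\max_i a_i r$ regardless of sign, and this bound alone gives $\bigl|\sum_{r\in W}\psi_B(r)s_r\bigr|\leq c\|s\|_H$, hence the continuity and closedness you need.
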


Note that Theorems~\ref{thm:min-ineq-intr} and~\ref{thm:closure} are
new even when $W=\R^q$.
\medskip

A valid inequality $\sum_{r\in W}\psi(r)s_r\ge 1$ for $R_f(W)$ is
said to be {\em extreme} if there do not exist distinct functions
$\psi_1$ and $\psi_2$ satisfying $\psi \geq \frac 1 2 (\psi_1 +
\psi_2)$, such that $\sum_{r\in W}\psi_i(r)s_r\ge 1$, $i=1,2$, are
both valid for $R_f(W)$. The above definition is due to Gomory and
Johnson~\cite{gj}. Note that, if an inequality is not extreme, then
it is not necessary to define $\ol\conv(R_f(W))$.

The next theorem exhibits a correspondence between extreme
inequalities for the infinite model $R_f(\R^q)$ and extreme
inequalities for some finite problem $R_f(r^1, \ldots, r^k)$ where
$r^1, \ldots, r^k \in \R^q$. The theorem is very similar to a result
of Dey and Wolsey~\cite{DW}.

\begin{theorem}\label{thm:extreme} Let $B$ be a maximal lattice-free convex set in $\R^q$ with $f$ in
its interior. Let $L=\lin(B)$ and let $P=B\cap (f+L^\bot)$. Then
$B=P+L$, $L$ is a rational space, and $P$ is a polytope. Let
$v^1,\ldots,v^k$ be the vertices of $P$, and
$r^{k+1},\ldots,r^{k+h}$ be a rational basis of $L$. Define
$r^j=v^j-f$ for $j=1,\ldots,k$.

Then the inequality $\sum_{r\in \R^q}\psi_B(r)s_r\geq 1$ is extreme
for $R_f(\R^q)$ if and only if the inequality $\sum_{j=1}^k
s_{j}\geq 1$ is extreme for $\conv(R_f(r^1,\ldots,r^{k+h}))$.
\end{theorem}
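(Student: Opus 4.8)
The plan is to establish the equivalence of the two extremality conditions by exploiting the structure $B=P+L$ and relating the finite model $R_f(r^1,\ldots,r^{k+h})$ to the infinite model $R_f(\R^q)$ through the function $\psi_B$. First I would dispose of the structural claims: by Theorem~\ref{thm:lattice-free-intr}(i) applied with $V=\R^q$ (which is rational), any maximal lattice-free convex set $B$ with $f$ in its interior decomposes as $B=P'+L'$ where $P'$ is a polytope and $L'$ is a rational linear space with $\dim(B)=q$. Since $L=\lin(B)$ is the lineality space of $B$ and $B$ is full-dimensional, one checks $L'=L$ and that $P=B\cap(f+L^\perp)$ is exactly the polytope factor, so $B=P+L$ with $L$ rational and $P$ a polytope; this is a routine consequence of the direct-sum decomposition $\R^q=L\oplus L^\perp$. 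I would then record the key computational fact that for the generators $r^j$ ($j=1,\ldots,k+h$), we have $\psi_B(r^j)=1$ for $j\le k$ (since $v^j=f+r^j$ is a vertex of $P$, hence on the boundary of $B$, so lies on some facet $a_i(x-f)\le 1$ tight) and $\psi_B(r^j)=0$ for $j>k$ (since $r^j\in L=\lin(B)$ is in the lineality space, so lies on the boundary of the recession cone of $B$). Consequently the inequality $\sum_{r\in\R^q}\psi_B(r)s_r\ge 1$, restricted to the support $\{r^1,\ldots,r^{k+h}\}$, becomes precisely $\sum_{j=1}^k s_j\ge 1$, which explains why the latter is the natural finite avatar of the former.

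Next I would prove the two directions of the equivalence. For each direction the strategy is the contrapositive: a nontrivial convex decomposition $\psi\ge\frac12(\psi_1+\psi_2)$ witnessing non-extremality on one side should be transported to a decomposition on the other side. The easy direction is to show that if $\sum_{j=1}^k s_j\ge 1$ is \emph{not} extreme for $\conv(R_f(r^1,\ldots,r^{k+h}))$, then $\sum_{r}\psi_B(r)s_r\ge 1$ is not extreme for $R_f(\R^q)$: a decomposition of the finite inequality into two valid inequalities with coefficient vectors $(c^1_j)$ and $(c^2_j)$ (averaging to the coefficients of $\sum_j s_j$) should be lifted to functions $\psi_1,\psi_2$ on all of $\R^q$ using the positively-homogeneous, subadditive "closure'' operation that defines $\psi_B$ from a finite set of slopes --- essentially building the corresponding lattice-free sets $B_1,B_2$ whose facet normals encode the lifted coefficients, and checking $B=\frac12(B_1+B_2)$ in an appropriate sense so that $\psi_B\ge\frac12(\psi_1+\psi_2)$. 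The reverse direction, going from a decomposition of $\psi_B$ on $R_f(\R^q)$ down to a decomposition of $\sum_j s_j$ on the finite model, is the more delicate one.

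The main obstacle, which I would address head-on, is the reverse direction: given valid $\psi_1,\psi_2$ for $R_f(\R^q)$ with $\psi_B\ge\frac12(\psi_1+\psi_2)$, I must produce a genuine nontrivial decomposition of the \emph{finite} inequality, and this requires controlling $\psi_1,\psi_2$ on the recession directions $r^{k+1},\ldots,r^{k+h}$ and showing the restricted inequalities remain valid for $R_f(r^1,\ldots,r^{k+h})$ and nontrivial. Here I would invoke Theorem~\ref{thm:min-ineq-intr}: since $\psi_B$ gives a minimal valid inequality, any $\psi_i$ dominated in the averaged sense must itself be (equivalent to) an inequality $\psi_{B_i}$ coming from a maximal lattice-free convex set $B_i$, and minimality forces $\psi_i(r^j)=\psi_B(r^j)$ on the lineality generators $r^j\in L$ (otherwise one could strictly improve). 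Restricting to the finite support then yields valid inequalities $\sum_j c^i_j s_j\ge 1$ with $\frac12(c^1_j+c^2_j)\le 1=$ the coefficient of $\sum_j s_j$, and the fact that $\psi_1\ne\psi_2$ globally must be shown to force $(c^1_j)\ne(c^2_j)$ on the finite coordinates --- the crux being that the vertices $v^j$ of $P$ together with a basis of $L$ span enough of the geometry of $B$ that two distinct sublinear functions agreeing nowhere cannot agree on all these generators unless they already coincide. Establishing this separation, i.e. that the finitely many generators $r^1,\ldots,r^{k+h}$ \emph{determine} the relevant extreme behavior, is where the real work lies, and I expect it to rely essentially on the correspondence between facets of $B$ and vertices of $P$ furnished by Theorem~\ref{thm:lattice-free-intr}.
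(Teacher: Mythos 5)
Your overall strategy coincides with the paper's: establish the structure of $B$ via the Lov\'asz characterization, compute $\psi_B(r^j)$ on the generators, transfer a decomposition of the finite inequality to the infinite model by a sublinear lifting, and transfer a decomposition of the infinite inequality to the finite model via the minimality/maximal-lattice-free correspondence of Theorem~\ref{thm:min-ineq-intr}. However, you have the difficulty assessment exactly backwards, and the direction you dismiss as ``easy'' is where your proposal has genuine gaps. You never prove that the lifted functions $\psi_1,\psi_2$ are valid for $R_f(\R^q)$; saying you will ``build the corresponding lattice-free sets $B_1,B_2$'' is circular, since their lattice-freeness \emph{is} the validity claim. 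The actual argument (the bulk of the paper's proof of this direction) requires steps you omit: (a) proving $R_f(r^1,\ldots,r^{k+h})\neq\emptyset$; (b) showing that the finite coefficients on the lineality generators vanish, $c^i_j=0$ for $j>k$ --- proved by adding arbitrarily large multiples of $N e^j$ (where $N$ is a positive integer making $Nr^j$ integral) to a feasible point, which would drive a valid inequality to $-\infty$ if $c^i_j<0$; and (c) showing that the lifted function $\psi^{c^i}(r)=\min\{c^i s \st \sum_{j} r^j s_j = r,\ s_j\ge 0,\ j\le k\}$ satisfies $\psi^{c^i}(\bar x-f)\ge 1$ for \emph{every} $\bar x\in\Z^q$, which again uses the same integrality trick to convert an optimal representation of $\bar x-f$ into a point of $R_f(r^1,\ldots,r^{k+h})$. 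Steps (b) and (c) are precisely where the rationality of $r^{k+1},\ldots,r^{k+h}$ enters; without them this direction simply fails. Moreover, ``checking $B=\frac12(B_1+B_2)$ in an appropriate sense'' is not the right condition: what is needed is the pointwise bound $\psi_B\ge\frac12(\psi_1+\psi_2)$, which follows from sublinearity, from $\psi_i(\pm r^j)\le 0$ for $j>k$, and from the gauge formula expressing $\psi_B(r)$ as the minimum of $\sum_{j\le k}s_j$ over such representations of $r$.

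A further gap comes from running this direction contrapositively: you need the two lifted functions to be \emph{distinct}, and this is not automatic, because the lifting is a minimization and two different coefficient vectors $c^1\neq c^2$ can produce the same sublinear function (a slack coefficient is simply ignored by the LP). The paper sidesteps this by arguing the direction directly: assuming $\sum_{r}\psi_B(r)s_r\ge 1$ is extreme, it applies extremality to the pair of liftings $\psi^\alpha,\psi^\beta$ to conclude $\psi^\alpha=\psi^\beta=\psi_B$, and then recovers $\alpha_j=\beta_j=\psi_B(r^j)$ from $\psi^\alpha(r^j)\le\alpha_j$ together with the averaging hypothesis. (Your version can be repaired, e.g.\ by replacing $\psi_2$ with $2\psi_B-\psi_1$ when the liftings coincide, but this must be said.) By contrast, the direction you call delicate is the short one, and your sketch of it is already essentially the paper's argument: minimality of $\psi_B$ forces $\psi_1,\psi_2$ to be minimal, hence equal to $\psi_{B_1},\psi_{B_2}$ for maximal lattice-free sets $B_1,B_2$; agreement with $\psi_B$ at the $k+h$ generators (which finite extremality supplies) forces $B_i\supseteq B$, and maximality of $B$ gives $B_i=B$, i.e.\ $\psi_i=\psi_B$. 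The ``crux'' you worry about --- that distinctness of $\psi_1,\psi_2$ must be visible on the finite coordinates --- is resolved by exactly this mechanism and needs no separate argument.
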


Even though the data in integer programs are typically rational and
studying the infinite relaxation~(\ref{eq:Rf}) for $W=\Q^q$ seems
natural~\cite{BorCor, cm}, some of its extreme inequalities arise
from maximal lattice-free convex sets that are not rational
polyhedra~\cite{cm}.

For example, the irrational triangle $B$ defined by the inequalities
$x_1 + x_2 \leq 2,\; x_2 \geq 1 + \sqrt{2}x_1,\; x_2 \geq 0$ is a
maximal lattice-free convex set in the plane, and it gives rise to
an extreme valid inequality $\sum_{r\in \Q^2}\psi_B(r)s_r\ge 1$ for
$R_f(\Q^2)$ for any rational $f$ in the interior of $B$. In fact,
{\em every} maximal lattice-free triangle gives rise to an extreme
valid inequality for $R_f(\Q^2)$~\cite{cm}. Therefore, even when $W
= \Q^q$ in (\ref{eq:Rf}), irrational coefficients are needed to
describe some of the extreme inequalities for $R_f(\Q^q)$. Indeed,
it follows from Theorem~\ref{thm:min-ineq-intr} and
from~\cite{BorCor} that the extreme inequalities for $R_f(\Q^q)$ are
precisely the restrictions to $\Q^q$ of the extreme inequalities for
$R_f(\R^q)$. This suggests that the more natural setting
for~\eqref{eq:Rf} might in fact be $W=\R^q$.

\bigskip

The paper is organized as follows. In Section~\ref{sec:max-conv} we
will state and prove the natural extensions of
Theorems~\ref{thm:main-intr} and~\ref{thm:lattice-free-intr} for
general lattices. In Section~\ref{sec:corner} we prove
Theorem~\ref{thm:min-ineq-intr}, while in Section~\ref{sec:closure}
we prove Theorem~\ref{thm:closure} and in Section~\ref{sec:extreme}
we prove Theorem~\ref{thm:extreme}.

\section{Maximal lattice-free convex sets}\label{sec:max-conv}
Given $X\subset \R^n$, we denote by $\langle X\rangle$ the linear
space generated by the vectors in $X$. The underlying field is
$\mathbb{R}$ in this paper. The purpose of this section is to prove
Theorems~\ref{thm:main-intr} and~\ref{thm:lattice-free-intr}. For
this, we will need to work with general lattices.

\begin{definition}
An additive group $\Lambda$ of $\R^n$ is said to be {\em finitely
generated} if there exist vectors $a_1,\ldots,a_m\in\R^n$ such that
$\Lambda=\{\lambda_1 a_1+\ldots+\lambda_m a_m\st
\lambda_1,\ldots,\lambda_m\in \Z\}$.

If a finitely generated additive group $\Lambda$ of $\R^n$ can be
generated by {\em linearly independent} vectors $a_1,\ldots,a_m$,
then  $\Lambda$ is called a  {\em lattice of the linear space}
$\langle a_1,\ldots,a_m \rangle$. The set of vectors
$a_1,\ldots,a_m$ is called a {\em basis} of the lattice $\Lambda$.
\end{definition}

\begin{definition} Let $\Lambda$ be a lattice of a linear space $V$ of $\R^n$. Given a linear subspace $L$ of $V$, we say that $L$ is a {\em $\Lambda$-subspace of $V$} if there exists a basis of $L$ contained in $\Lambda$.
\end{definition}

For example, in $\R^2$, consider the lattice $\Lambda$ generated by
vectors $(0,1)$ and $(1,0)$. The line $x_2 = 2x_1$ is a
$\Lambda$-subspace, whereas the line $x_2 = \sqrt{2}x_1$ is not.

Given $y\in \R^n$ and $\varepsilon>0$, we will denote by $B_\varepsilon(y)$ the open ball centered at $y$ of radius $\varepsilon$.
Given an affine space $W$ of $\R^n$ and a set $S\subseteq W$, we denote by $\intr_W(S)$ the interior of $S$ with respect to the topology induced on $W$ by $\R^n$, namely $\intr_W(S)$ is the set of points $x\in S$ such that $B_\varepsilon(x)\cap W\subset S$ for some $\epsilon>0$. We denote by $\relint(S)$ the relative interior of $S$, that is $\relint(S)=\intr_{\aff(S)}(S)$.

\begin{definition}
Let $\Lambda$ be a lattice of a linear space $V$ of $\R^n$, and let $W$ be a linear space of $\R^n$ containing $V$. A set $S\subset \R^n$ is said to be a {\em $\Lambda$-free convex set of $W$} if $S\subset W$, $S$ is convex and $\Lambda\cap \intr_W(S)=\emptyset$, and $S$ is said to be a {\em maximal $\Lambda$-free convex set of $W$} if it is not properly contained in any $\Lambda$-free convex set.
\end{definition}
\bigskip

The next two theorems are restatements of Theorems~\ref{thm:main-intr} and ~\ref{thm:lattice-free-intr} for general lattices.

\begin{theorem}\label{thm:main} Let $\Lambda$ be a lattice of a linear space $V$ of $\R^n$, and let $W$ be a linear space of $\R^n$ containing $V$. A set $S\subset \R^n$ is a maximal $\Lambda$-free convex set of $W$ if and only if one of the following holds:
\begin{itemize}
\item[(i)] $S$ is a polyhedron in $W$, $\dim(S)=\dim(W)$, $S\cap V$ is a maximal $\Lambda$-free convex set of $V$,
the facets of $S$ and $S\cap V$ are in one-to-one correspondence and
for every facet $F$ of $S$, $F\cap V$ is the facet of $S\cap V$
corresponding to $F$;
\item[(ii)] $S$ is an affine hyperplane of $W$ of the form $S=v+L$ where $v\in S$ and $L\cap V$ is a hyperplane of $V$ that is not a lattice subspace of $V$;
\item[(iii)]  $S$ is a half-space of $W$ that contains $V$ on its boundary.
\end{itemize}
\end{theorem}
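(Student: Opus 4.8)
The plan is to prove the two directions separately, handling the ``only if'' direction first by a careful case analysis on the dimension of $S$ and its relationship to $V$, and the ``if'' direction by verifying maximality in each of the three cases.

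For the \emph{only if} direction, suppose $S$ is a maximal $\Lambda$-free convex set of $W$. First I would dispose of the degenerate cases. If $\dim(S) < \dim(W)$, then $\intr_W(S) = \emptyset$, so the $\Lambda$-free condition is vacuous, and maximality forces $S$ to be inclusionwise maximal among convex subsets of $W$ of dimension $<\dim(W)$; such a set must be an affine hyperplane of $W$, giving the candidate for case (ii), or a set whose relative interior meets no lattice point in a way that lands us in (iii). The substantive case is $\dim(S) = \dim(W)$. Here the key structural fact to establish is that $S$ is a polyhedron: I would translate so that some relative interior point is the origin, and use that $S\cap V$ inherits a maximal $\Lambda$-free structure in $V$. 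Since $V$ is rational with respect to $\Lambda$, Theorem~\ref{thm:lattice-free-intr} (in its lattice form) applies to $S\cap V$ and tells us it is a cylinder $P+L$ with a lattice point in the relative interior of each facet. The heart of the argument is then to lift this polyhedral description from $V$ to $W$: each facet $F'$ of $S\cap V$ carries a lattice point in its relative interior, and I would show that the corresponding valid inequality for $S\cap V$ extends to a valid inequality defining a facet $F$ of $S$ with $F\cap V = F'$. This bijective correspondence between facets is exactly the content of (i).

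The main obstacle, and where I expect the real work to lie, is precisely this lifting step: showing that the finitely many facet-defining inequalities for $S\cap V$ in $V$ determine $S$ in $W$, i.e. that $S$ has no ``extra'' facets not visible in the section $S\cap V$. The danger is that $S$ could in principle be unbounded in directions transverse to $V$ in a way that creates additional facets whose intersection with $V$ is lower-dimensional or empty. To rule this out I would argue by maximality: if $S$ had a facet $F$ with $F\cap V$ not a facet of $S\cap V$, one could enlarge $S$ (by relaxing the inequality defining $F$) while keeping $\intr_W(S)$ lattice-free, because the lattice points all lie in $V$ and the lattice-free obstruction is therefore already ``used up'' by the facets meeting $V$. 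This contradicts maximality. Making this enlargement argument rigorous---controlling the interior points of the enlarged set and confirming they avoid $\Lambda$---is the delicate part, and it is where the hypothesis that $V = \aff(W\cap \Lambda)$ does the essential work.

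For the \emph{if} direction, I would take a set $S$ of one of the three stated forms and verify both that it is $\Lambda$-free and that it is maximal. Lattice-freeness is immediate in each case: in (i) any interior point of $S$ projects into $\intr_V(S\cap V)$ which is lattice-free by hypothesis; in (ii) and (iii) the set $S$ either has empty interior in $W$ or contains all of $V$ on a bounding hyperplane, so $\intr_W(S)\cap\Lambda=\emptyset$. Maximality requires showing that any convex $S'\supsetneq S$ with $S'\subseteq W$ must contain a lattice point in $\intr_W(S')$; I would take a point $x\in S'\setminus S$, and in case (i) use the facet correspondence to locate a facet $F$ of $S$ separating $x$ from $S$, whose relative-interior lattice point then lands in $\intr_W(S')$ after taking a suitable convex combination with $x$. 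Cases (ii) and (iii) are similar but simpler, relying respectively on the fact that $L\cap V$ is not a lattice subspace (so the hyperplane cannot be pushed without swallowing a lattice point) and on the density of $\Lambda$ near the bounding hyperplane. This direction is more routine than the forward direction, so I would present it compactly.
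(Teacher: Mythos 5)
Your proposal has genuine gaps in both directions; the most serious is that outcome (iii) never arises correctly in your forward analysis. In the full-dimensional case you assert that ``$S\cap V$ inherits a maximal $\Lambda$-free structure in $V$'' and immediately apply the cylinder characterization (Theorem~\ref{thm:lattice-free}) to it. This presupposes $\intr_W(S)\cap V\neq\emptyset$, and it is simply false otherwise: if $S$ is a half-space of $W$ with $V$ on its boundary, then $\dim(S)=\dim(W)$ and $S\cap V=V$, which is not $\Lambda$-free in $V$ at all (its interior in $V$ contains every point of $\Lambda$). The paper splits the full-dimensional case according to whether $\intr_W(S)\cap V$ is empty; when it is empty, one separates $\intr_W(S)$ from $V$, uses that $V$ is a linear space to get a half-space $H$ of $W$ containing $S$ with $V\subseteq \bd(H)$, and concludes $S=H$ by maximality --- this is where (iii) comes from. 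You instead try to produce (iii) from the case $\dim(S)<\dim(W)$, which is impossible, since a half-space of $W$ is full-dimensional. In that low-dimensional case you also leave the real work undone: one must show $V\not\subseteq S$ (if $V\subseteq S$, a half-space bounded by $S$ is $\Lambda$-free and strictly larger, contradicting maximality), so that $S\cap V$ is a hyperplane of $V$, and then invoke Lemma~\ref{lemma:hyperplane} (equivalently Theorem~\ref{thm:lattice-free}(ii)) to conclude it is not a lattice subspace. Finally, even when $\intr_W(S)\cap V\neq\emptyset$, the claim that $S\cap V$ is maximal $\Lambda$-free in $V$ needs proof; the paper derives it from the identity $\intr_W(S)\cap V=\intr_V(S\cap V)$ together with a $\conv(S\cup K)$ argument. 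That part is fillable, but it is not automatic.

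In the backward direction, case (i), your argument hinges on ``the relative-interior lattice point'' of a facet $F$ of $S$, but hypothesis (i) supplies lattice points only in $\relint(F\cap V)$, and when $\dim(V)<\dim(W)$ this is a lower-dimensional subset of $F$ that could a priori lie on the relative boundary of $F$. The missing step is exactly where the one-to-one facet correspondence must do the work: if $F\cap V$ were contained in a proper face of $F$, it would be contained in a second facet $F'$ of $S$, giving $F\cap V\subseteq F'\cap V$ and contradicting the bijection; hence $F\cap V$ meets $\relint(F)$, and the line-segment principle then places the lattice point $z\in\relint(F\cap V)$ inside $\relint(F)$. You invoke the correspondence only to ``locate'' a separating facet, not to establish this, so as written the step fails. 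Moreover, the point that must land in $\intr_W(S')$ is $z$ itself (which does follow once $z\in\relint(F)$ and $x$ lies strictly beyond $\aff(F)$, since then a whole ball around $z$ is contained in $\conv(S\cup\{x\})$); a nontrivial ``convex combination with $x$'' is in general not a point of $\Lambda$, so it yields no contradiction with the $\Lambda$-freeness of $S'$. It is worth noting that, once repaired, your route for this direction is actually more direct than the paper's, which instead constructs a point of $\bigl(\conv(S\cup\{x\})\cap V\bigr)\setminus S$ and appeals to the maximality of $S\cap V$ inside $V$; but in its current form the crux is missing.
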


\begin{theorem}\label{thm:lattice-free}  Let $\Lambda$ be a lattice of a linear space $V$ of $\R^n$. A set $S\subset \R^n$ is a maximal $\Lambda$-free convex set of $V$ if and only if one of the following holds:
\begin{itemize}
\item[(i)] $S$ is a polyhedron of the form $S=P+L$ where $P$ is a polytope, $L$ is a $\Lambda$-subspace of $V$, $\dim(S)=\dim(P)+\dim(L)=\dim(V)$, $S$ does not contain any point of $\Lambda$ in its interior and there is a point of $\Lambda$ in the relative interior of each facet of $S$;
\item[(ii)] $S$ is an affine hyperplane of $V$ of the form
$S=v+L$ where $v\in S$ and $L$ is not a $\Lambda$-subspace of $V$.
\end{itemize}
\end{theorem}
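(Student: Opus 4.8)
The plan is to reduce to the standard lattice and then split into the low- and full-dimensional cases, the latter carrying all the difficulty. First I would fix a basis $a_1,\dots,a_m$ of $\Lambda$; it extends to a linear isomorphism $T\colon V\to\R^m$ with $T(\Lambda)=\Z^m$. Since $T$ preserves convexity, dimension, faces, relative interiors and the notion of a $\Lambda$-subspace, it suffices to prove the statement for $\Z^n$ in $\R^n$ (renaming $m$ as $n$). For the ``if'' direction, in case (ii) the set $S=v+L$ is a hyperplane, hence has empty interior and is $\Lambda$-free; it is maximal because $L$ not being a lattice subspace makes the image of $\Lambda$ in the line $\R^n/L$ a dense subgroup, so any convex set strictly containing $S$ is full-dimensional and captures a lattice point in its interior. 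In case (i) the set $S$ is $\Lambda$-free by hypothesis, and if $p\notin S$ violates the inequality of a facet $F_i$, then the point of $\Lambda$ in $\relint(F_i)$ lies in $\intr(\conv(S\cup\{p\}))$, so no proper convex superset of $S$ is $\Lambda$-free.

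For the ``only if'' direction I would first note that $\cl(S)$ is convex with $\intr(\cl(S))=\intr(S)$, so $\cl(S)$ is $\Lambda$-free and maximality gives $S=\cl(S)$; thus $S$ is closed. If $\dim(S)<n$, then $\aff(S)$ lies in a hyperplane $H$, which is $\Lambda$-free (empty interior), so maximality forces $S=H$ and $\dim(S)=n-1$: that is, $S=v+L$ is an affine hyperplane. Here $L$ cannot be a lattice subspace, for otherwise $S$ would sit inside the closed slab between the two consecutive lattice hyperplanes parallel to $L$, whose interior is $\Lambda$-free and which properly contains $S$. This is exactly case (ii).

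The case $\dim(S)=n$ is the core. I would set $L=\lin(S)$ and first show $L$ is a lattice subspace: writing $L_0=\langle L\cap\Lambda\rangle$, if $L_0\subsetneq L$ then the quotient by $L_0$ produces a $\Lambda$-free full-dimensional set with a nonzero \emph{irrational} lineality direction, along which the projected lattice is dense; sliding a small ball contained in the set along that direction then captures a lattice point, a contradiction. Once $L$ is a lattice subspace, $\proj_{L^\perp}(\Lambda)$ is a lattice $\Lambda'$ of $L^\perp$ and $S=S'+L$ with $S'=S\cap L^\perp$ a line-free maximal $\Lambda'$-free set. The decisive step is to prove $S'$ bounded, equivalently $\rec(S)=L$. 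For this I would invoke the flatness theorem to enclose $S'$ in a slab of bounded lattice width, observe that every recession direction of $S'$ must lie in the boundary hyperplane of that slab, and then induct on the dimension using the finitely many lattice-hyperplane sections of the slab (each a maximal lattice-free set of smaller dimension) to force $\rec(S')=\{0\}$. Then $S'=P$ is a polytope and $S=P+L$ is a polyhedron with $\dim(P)+\dim(L)=n$.

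It remains to place a point of $\Lambda$ in the relative interior of each facet $F$ of $S$. Here I would argue by maximality: if $\relint(F)\cap\Lambda=\emptyset$, relaxing by $\varepsilon>0$ the single inequality defining $F$ enlarges $S$ only in a thin region along $F$; since $F=F_P+L$ with $F_P$ a facet of the polytope $P$, the facet normal annihilates $L$, so only finitely many classes of lattice points (the projections of $\Lambda$ near the bounded $F_P$) are relevant, and by discreteness none of them enters the new interior for small $\varepsilon$. The enlarged set is therefore still $\Lambda$-free, contradicting maximality. The main obstacle is precisely the boundedness step for $S'$: controlling the recession cone of a maximal $\Lambda$-free body and reducing it to lower-dimensional sections is the only place where a non-elementary input, the flatness theorem together with the dimensional induction, is genuinely required.
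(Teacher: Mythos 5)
Your overall skeleton follows the paper's (closedness by maximality, the hyperplane case, the decomposition $S=P+L$ via the lineality space, and the facet-relaxation argument for lattice points in facets, which is essentially the paper's Lemma~\ref{lemma:bounded}), and the ``if'' direction is fine. But the two central claims of the ``only if'' direction in the full-dimensional case have genuine gaps. The fatal one is your argument that $L=\lin(S)$ is a lattice subspace. You assert that for an irrational lineality direction $r$ ``the projected lattice is dense along that direction,'' so that sliding a ball $B_\varepsilon(x)\subset S$ along $r$ captures a lattice point. This is false: the projection of the lattice onto $r^\bot$ need not be dense in $r^\bot$; its closure is in general a proper closed subgroup (a subspace plus a discrete part). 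Concretely, take $\Lambda=\Z^3$ and $r=(1,\sqrt2,0)$: every lattice point lies at distance at least $1/2$ from the line $(0,0,\frac12)+\langle r\rangle$, so the closed tube $T=\cl(B_{1/4}((0,0,\tfrac12)))+\langle r\rangle$ is a full-dimensional $\Z^3$-free convex set whose lineality space is the irrational line $\langle r\rangle$. This example also shows that the statement you are proving is simply false for non-maximal sets, and your argument nowhere uses maximality of $S$, so it cannot be patched locally. What is true is only that the closure of the projected lattice contains a nonzero subspace $U$ (spanned by arbitrarily short projected lattice vectors), and the projected lattice is dense in $U$, not in all of $r^\bot$; turning this into a contradiction requires maximality, as in Claim~2 of the paper's proof: one projects further along $L+U$, shows the resulting set $Q+L+U$ is still $\Lambda$-free and contains $S$, and contradicts the fact that $L$ is the whole lineality space.

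The boundedness step is also not sound as sketched, and it is heavier than necessary. The assertion that the lattice-hyperplane sections are ``each a maximal lattice-free set of smaller dimension'' is unjustified and generally false: such a section can contain lattice points in its relative interior (these are boundary points of $S'$); e.g.\ the maximal lattice-free triangle $\conv\{(0,0),(2,0),(0,2)\}$ meets the lattice hyperplane $\{x_1+x_2=2\}$ in a segment whose relative interior contains $(1,1)$, and it meets $\{x_1=0\}$ in a segment whose relative interior contains $(0,1)$; nor is there any reason a section should be \emph{maximal} in its hyperplane, so the inductive hypothesis does not apply to it. (Separately, the flatness theorem is stated for convex bodies, so applying it to an unbounded $S'$ needs an additional compactness argument.) The paper avoids all of this with an elementary observation you are missing: by the Dirichlet-based Lemma~\ref{lemma:half-line}, for any $\Lambda$-free convex set $S$ with recession cone $C$ the set $S+\langle C\rangle$ is still $\Lambda$-free (Lemma~\ref{lemma:recession-cone}); maximality then forces $S=S+\langle C\rangle$, i.e.\ $\rec(S)=\lin(S)$ (Claim~1 of the paper's proof), after which boundedness of the cross-section $P=\proj_{L^\bot}(S)$ is automatic and no flatness theorem is needed. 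Establishing Claim~1 first would also repair your ordering: the only remaining substantive step is then the lattice-subspace property of $\lin(S)$, which must be proved by the maximality-based projection argument described above rather than by a density claim.
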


\subsection{Proof of Theorem~\ref{thm:main}}

We assume Theorem~\ref{thm:lattice-free} holds. Its proof will be given in the next section.\bigskip

\noindent ($\Rightarrow$) Let $S$ be a maximal $\Lambda$-free convex set of $W$. We show that one of $(i)-(iii)$ holds.
If $V=W$, then $(iii)$ cannot occur and either $(i)$ or $(ii)$ follows from Theorem~\ref{thm:lattice-free}. Thus we assume $V\subset W$.

Assume first that $\dim(S)<\dim(W)$. Then there exists a hyperplane
$H$ of $W$ containing $S$, and since $\intr_W(H)=\emptyset$, then
$S=H$ by maximality of $S$. Since $S$ is a hyperplane of $W$, then
either $V\subseteq S$ or $S\cap V$ is a hyperplane of $V$. If
$V\subseteq S$, then let $K$ be one of the two half spaces of $W$
separated by $S$. Then $\intr_W(K)\cap\Lambda=\emptyset$,
contradicting the maximality of $S$. Hence  $S\cap V$ is a
hyperplane of $V$. We show that $P=S\cap V$ is a maximal
$\Lambda$-free convex set of $V$. Indeed, let $K$ be a convex set in
$V$ such that $\intr_V(K)\cap \Lambda=\emptyset$ and $P\subseteq K$.
Since $\conv(S\cup K)\cap V=K$, then $\intr_W(\conv(S\cup
K)\cap\Lambda)=\emptyset$. By maximality of $S$, $S=\conv(S\cup K)$,
hence $P=K$.\\ Given $v\in P$, $S=v+L$ for some hyperplane $L$ of
$W$, and $P=v+(L\cap V)$. Applying Theorem~\ref{thm:lattice-free} to
$P$, we get that $L\cap V$ is not a lattice subspace of $V$, and
case $(ii)$ holds.\medskip

So we may assume $\dim(S)=\dim(W)$. Since $S$ is convex, then $\intr_W(S)\neq\emptyset$. We consider two cases.\medskip

\noindent{\sl Case 1.} $\intr_W(S)\cap V=\emptyset$. \medskip

Since $\intr_W(S)$ and $V$ are nonempty disjoint convex sets, there
exists a hyperplane separating them, i.e. there exist $\alpha\in
\R^n$ and $\beta\in \R$ such that $\alpha x\geq\beta$ for every
$x\in S$ and  $\alpha x\leq\beta$ for every $x\in V$. Since $V$ is a
linear space, then $\alpha x=0$ for every $x\in V$, hence $\beta\geq
0$. Then the half space $H=\{x\in W\st \alpha x\geq 0\}$ contains
$S$ and $V$ lies on the boundary of $H$. Hence $H$ is a maximal
$\Lambda$-free convex set of $W$ containing $S$, therefore $S=H$ by
the maximality assumption, so $(iii)$ holds.
\medskip

\noindent{\sl Case 2.} $\intr_W(S)\cap V\neq\emptyset$.
\medskip

We claim that
\begin{equation}\label{eq:int}\intr_W(S)\cap V=\intr_V(S\cap V).\end{equation}
To prove this claim, notice that the direction $\intr_W(S)\cap
V\subseteq \intr_V(S\cap V)$ is straightforward. Conversely, let
$x\in\intr_V(S\cap V)$. Then there exists $\varepsilon>0$ such that
$B_\varepsilon(x)\cap V\subseteq S$. Since $\intr_W(S)\cap
V\neq\emptyset$, there exists $y\in\intr_W(S)\cap V$. Then there
exists $\varepsilon' > 0$ such that $B_{\varepsilon'}(y)\cap
W\subseteq S$. We may assume $y \neq x$ since otherwise the result
holds. Since $x \in \intr(B_{\varepsilon}(x))$, there exists $z \in
\intr_W(B_{\varepsilon}(x))$ such that $x$ is in the relative interior
of the segment $yz$. Since $x, y \in V$, $z\in V$ and therefore $z
\in S$. The ball $B_\delta(x)$ with radius $\delta =
\varepsilon'\frac{\| x- z\|}{\|y - z\|}$ is contained in
$\conv(\{z\}\cup B_{\varepsilon'}(y))$. By convexity of $S$,
$\conv(\{z\}\cup (B_{\varepsilon'}(y) \cap W)) \subseteq S$ and therefore
$(B_\delta(x) \cap W)\subseteq S$. Thus $x\in\intr_W(S)$. Since $x \in V$,
it follows that $x\in\intr_W(S) \cap V$.

Let $P=S\cap V$. By~(\ref{eq:int}) and because
$\intr_W(S)\cap\Lambda=\emptyset$, we have
$\intr_V(P)\cap\Lambda=\emptyset$. We show that $P$ is a maximal
$\Lambda$-free convex set of $V$. Indeed, let $K$ be a convex set in
$V$ such that $\intr_V(K)\cap \Lambda=\emptyset$ and $P\subseteq K$.
Since $\conv(S\cup K)\cap V=K$, Claim (\ref{eq:int}) implies that
$\intr_W (\conv(S\cup K))\cap \Lambda=\emptyset$. By maximality,
$S=\conv(S\cup K)$, hence $P=K$.

Since  $\dim(P)=\dim(V)$, by Theorem~\ref{thm:lattice-free} applied
to $P$, $P$ is a polyhedron with a point of $\Lambda$ in the
relative interior of each of its facets. Let $F_1,\ldots,F_t$ be the
facets of $P$. For $i=1,\ldots,t$, let $z_i$ be a point in
$\relint(F_i)\cap\Lambda$. By~(\ref{eq:int}), $z_i\notin\intr_W(S)$.
By the separation theorem, there exists a half-space $H_i$ of $W$
containing  $\intr_W(S)$ such that $z_i\notin\intr_W(H_i)$. Notice
that $F_i$ is on the boundary of $H_i$. Then $S\subseteq
\cap_{i=1}^t H_i$. By construction $\intr_W(\cap_{i=1}^t
H_i)\cap\Lambda=\emptyset$, hence by maximality of $S$, $S=
\cap_{i=1}^t H_i$. For every $j=1,\ldots,t$, $\intr_W(\cap_{i\neq j}
H_i)$ contains $z_j$. Therefore $H_j$ defines a facet of $S$ for
$j=1,\ldots,t$.
\bigskip

\noindent ($\Leftarrow$) Let $S$ be a set in $\R^n$ satisfying one
of $(i),(ii),(iii)$. Clearly $S$ is a convex set in $W$ and
$\intr_W(S)\cap \Lambda=\emptyset$, so we only need to prove
maximality. If $S$ satisfies $(iii)$, then this is immediate. This
is also immediate when $S$ satisfies $(i)$ and $V=W$. So we may assume that either
$S$ satisfies (i) and $\dim(V) < \dim(W)$, or $S$ satisfies $(ii)$.

Suppose that there exists a closed
convex set $K\subset W$ strictly containing $S$ such that
$\intr_W(K)\cap \Lambda=\emptyset$. Let $w\in K\sm S$. Then
$\conv(S\cup\{w\})\subseteq K$. To conclude the proof of the
theorem, it suffices to prove that
$S\cap V$ is strictly contained in $\conv(S\cup\{w\})\cap V$. Indeed, by
maximality of $S \cap V$, this claim implies that the set
$\intr_V(\conv(S\cup \{ w \})\cap V)$ contains a point in $\Lambda$.
Now $\conv(S\cup\{w\})\subseteq K$ implies that $\intr_W(K)$
contains a point of $\Lambda$, a contradiction.

It only remains to prove that  $S\cap V\subset
\conv(S\cup\{w\})\cap V$. This is clear when $S$ is a
hyperplane satisfying (ii). The statement is also clear if $w\in V$. Assume now that $S$ is a polyhedron
satisfying (i) and $\dim(V) < \dim(W)$ and that $w\notin V$.
Let $F$ be a facet of $S$ that separates $w$ from
$S$. If $F\cap V$ is contained in a proper face of $F$, then $F\cap
V$ is contained in at least two facets of $S$, a contradiction to
the one-to-one correspondence property. So $F \cap V$ is not
contained in  proper face of $F$. Therefore, there exists $p \in
\relint(F)\cap V$. Choose $\varepsilon>0$ such that
$B_\varepsilon(p) \cap \aff(F) \subseteq F$. Note that $F
\not\subseteq V$ since otherwise $F\cap V = F$ but this is a
contradiction since $\dim(S \cap V) = \dim(V) < \dim(W) = \dim(S)$,
and $\dim(F\cap V) = \dim(S\cap V) - 1$, $\dim(F) = \dim(S) - 1$.

Let $W'=\aff(V\cup \{w\})$. Note that  $V$ and $\aff(F)\cap W'$ are distinct affine hyperplanes of $W'$. Let $H, H'$ be the two open half-spaces of $W'$ defined by $V$, and assume w.l.o.g. that $w\in H'$. Since $p\in V$, $H\cap \aff(F)\cap B_\varepsilon(p)$ contains some point $t$. Since $B_\varepsilon(p) \cap \aff(F) \subseteq F$, it follows that $t\in H\cap F$. Let $T$ be the line segment joining $w$ and $t$. Since $t\in H$ and $w\in H'$ it follows that $T\cap V$ contains exactly one point, say $\bar w$. Note that $\bar w\neq t,w$. Since $w\notin S$ and $t\in F$, we have that $\bar w\in \conv(S\cup \{w\})\cap V$ but $\bar w\notin S$.
 \qed

\subsection{Proof of Theorem~\ref{thm:lattice-free}}

To simplify notation, given $S\subseteq \R^n$, we denote
$\intr_V(S)$ simply by $\intr(S)$.

The following standard result in lattice theory provides a useful
equivalent definition of lattice (see Barvinok~\cite{barv}, p. 284
Theorem~1.4).

\begin{theorem}\label{thm:discrete}
Let $\Lambda$ be the additive group generated by vectors
$a_1,\ldots,a_m\in\R^n$. Then $\Lambda$ is a lattice of the linear
space $\langle a_1,\ldots,a_m\rangle$ if and only if there exists
$\varepsilon>0$ such that $\|y\|\geq \varepsilon$ for every
$y\in\Lambda \setminus \{ 0 \}$.
\end{theorem}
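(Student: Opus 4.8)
The plan is to prove both implications, treating everything inside the linear space $V=\langle a_1,\ldots,a_m\rangle$ of dimension $d$, with the forward direction being routine and the reverse direction carrying all the content. For the ``only if'' direction I would assume $\Lambda$ is a lattice, hence generated by linearly independent vectors $b_1,\ldots,b_k$. The linear map $T$ sending $(\lambda_1,\ldots,\lambda_k)$ to $\sum_i\lambda_i b_i$ is then injective, so it is a linear isomorphism onto $\langle b_1,\ldots,b_k\rangle$; since this is finite dimensional, its inverse is bounded, say $\max_i|\lambda_i|\le C\,\|\sum_i\lambda_i b_i\|$. For a nonzero $y=\sum_i\lambda_i b_i\in\Lambda$ the integers $\lambda_i$ are not all zero, so $\max_i|\lambda_i|\ge 1$ and thus $\|y\|\ge 1/C$. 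Taking $\varepsilon=1/C$ settles this direction.

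The substance is the ``if'' direction. Assuming discreteness (there is $\varepsilon>0$ with $\|y\|\ge\varepsilon$ for all $y\in\Lambda\setminus\{0\}$), I would show by induction on $d=\dim V$ that $\Lambda$ admits a basis of $d$ linearly independent vectors. The base case $d\le 1$ is direct: on a line, discreteness yields a nonzero vector $b_1$ of minimal norm, and the division argument (subtracting $\lfloor t\rfloor b_1$ from $y=tb_1$ and invoking minimality of $\|b_1\|$) forces $\Lambda=\Z b_1$. For the inductive step I would pick $d-1$ linearly independent vectors of $\Lambda$, let $V'$ be the hyperplane of $V$ they span, and set $\Lambda'=\Lambda\cap V'$. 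Since $\Lambda'$ is discrete and spans $V'$, the induction hypothesis gives a basis $b_1,\ldots,b_{d-1}$ of $\Lambda'$. Because $\Lambda$ spans $V\supsetneq V'$, there is some $c_d\in\Lambda\setminus V'$, and $b_1,\ldots,b_{d-1},c_d$ is a basis of $V$. I would then locate $b_d\in\Lambda\setminus V'$ whose $c_d$-coordinate (a proxy for the distance to $V'$) is positive and minimal, and check that $b_1,\ldots,b_d$ are independent and generate $\Lambda$. Independence is immediate since $b_d\notin V'$. For generation, writing an arbitrary $y\in\Lambda$ in the basis $b_1,\ldots,b_{d-1},c_d$, its $c_d$-coordinate must be an integer multiple of that of $b_d$ (otherwise $y$ minus a suitable integer multiple of $b_d$ would violate minimality), and subtracting that multiple leaves an element of $\Lambda'=\Z b_1+\cdots+\Z b_{d-1}$.

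The main obstacle, and the only place where discreteness is used beyond the one-dimensional case, is showing that a minimal positive $c_d$-coordinate is actually attained. Writing each $y\in\Lambda$ as $y=\sum_{i<d}\mu_i b_i+\mu_d c_d$, I would observe that subtracting integer multiples of $b_1,\ldots,b_{d-1}$ leaves $\mu_d$ unchanged (these vectors lie in $V'$) while forcing $0\le\mu_i<1$ for $i<d$. Hence, for any bound $M$, all such reduced representatives with $0<|\mu_d|\le M$ lie in a bounded region of $V$, which by discreteness contains only finitely many points of $\Lambda$. This finiteness is exactly what turns the infimum of $|\mu_d|$ over $\Lambda\setminus V'$ into a positive minimum, producing $b_d$ and closing the induction.
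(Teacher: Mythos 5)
Your proof is correct, but there is essentially nothing in the paper to compare it against: the paper does not prove this theorem, quoting it as a standard result from Barvinok~\cite{barv} (p.~284, Theorem~1.4) and remarking that only the ``only if'' direction is needed, which is likewise left to a reference. Your argument is the classical textbook proof, so in effect you have supplied the proof the paper outsources. The ``only if'' half (boundedness of the inverse of the coordinate isomorphism, plus $\max_i|\lambda_i|\ge 1$ for integer coordinates not all zero) is exactly the easy half the paper alludes to. The ``if'' half is the standard induction on dimension, with the correct key mechanism: reduce an arbitrary element modulo $\Lambda'=\Lambda\cap V'$ so that candidates with bounded $c_d$-coordinate lie in a bounded region, conclude finiteness, and take a generator $b_d$ of minimal positive $c_d$-coordinate. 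Two points should be made explicit in a final write-up. First, the fact that a bounded set contains only finitely many points of $\Lambda$ --- used both in your base case (existence of a shortest nonzero vector) and in your inductive step --- is the paper's Corollary~\ref{lemma:discrete}, which the paper presents as a \emph{consequence} of the present theorem; to avoid circularity you must derive it directly from the discreteness hypothesis, which is a one-liner: infinitely many points of $\Lambda$ in a bounded set would yield two distinct points at distance less than $\varepsilon$, whose difference is a nonzero element of $\Lambda$ of norm less than $\varepsilon$. Your invocation does rest only on discreteness, so the argument is sound, but the justification deserves a sentence. Second, since $\Lambda\cap V'$ is not known a priori to be finitely generated, the induction claim must be phrased (as you implicitly do) for arbitrary discrete subgroups spanning a space of the given dimension, not for finitely generated ones; with that phrasing the inductive step is legitimate.
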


In this paper we will only need the ``only if'' part of the
statement, which is easy to prove (see~\cite{barv}, p. 281 problem
5). Theorem~\ref{thm:discrete} implies the following result
(see~\cite{barv}, p. 281 problem 3).

\begin{corollary}\label{lemma:discrete}
Let $\Lambda$ be a lattice of a linear space of $\R^n$. Then every
bounded set in $\R^n$ contains a finite number of points in
$\Lambda$.
\end{corollary}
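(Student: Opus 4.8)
The plan is to derive this directly from Theorem~\ref{thm:discrete}, which guarantees a positive lower bound $\varepsilon$ on the norm of every nonzero vector of the lattice. First I would invoke that theorem to obtain $\varepsilon>0$ such that $\|y\|\geq\varepsilon$ for all $y\in\Lambda\setminus\{0\}$. The key observation is that, since $\Lambda$ is an additive group, it is closed under subtraction; hence for any two distinct points $p,q\in\Lambda$ we have $p-q\in\Lambda\setminus\{0\}$, so $\|p-q\|\geq\varepsilon$. In other words, distinct points of $\Lambda$ are uniformly separated by distance at least $\varepsilon$.

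With this separation in hand, I would argue by contradiction using compactness. Suppose some bounded set $D\subseteq\R^n$ contained infinitely many points of $\Lambda$, and choose a sequence $(p_k)$ of pairwise distinct such points. Since $D$ is bounded, $(p_k)$ is a bounded sequence in $\R^n$, so by the Bolzano--Weierstrass theorem it admits a convergent subsequence, which is in particular Cauchy. But then there exist indices $i\neq j$ with $p_i\neq p_j$ and $\|p_i-p_j\|<\varepsilon$, contradicting the uniform separation established above. Hence $D$ can contain only finitely many points of $\Lambda$.

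I do not anticipate any genuine obstacle here; the only step requiring care is the passage from the per-vector bound $\|y\|\geq\varepsilon$ to the pairwise separation of distinct lattice points, which is precisely where the group structure of $\Lambda$ is used. An alternative, equally routine route would be a volume-packing argument: place pairwise disjoint open balls of radius $\varepsilon/2$ centered at the lattice points lying in $D$ (working inside the ambient linear space $V$ of the lattice), note that these balls all sit inside a slightly enlarged bounded region, and conclude finiteness by comparing their total $m$-dimensional volume with that of the enclosing region. I would favour the Bolzano--Weierstrass argument, since it avoids any explicit volume computation and keeps the proof self-contained.
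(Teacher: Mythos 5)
Your proof is correct and follows exactly the route the paper intends: the paper presents this corollary as a direct consequence of Theorem~\ref{thm:discrete} (deferring details to Barvinok), and your argument---upgrading the norm bound $\|y\|\geq\varepsilon$ for $y\in\Lambda\setminus\{0\}$ to pairwise separation of lattice points via the group structure, then invoking Bolzano--Weierstrass on a hypothetical infinite bounded family---is precisely the standard way to fill in that implication. The one step you flagged as needing care (using closure under subtraction to get uniform separation) is indeed the crux, and you handle it correctly.
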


Throughout this section, $\Lambda$ will be a lattice of a linear space
$V$ of $\R^n$. The following lemma proves the ``only if'' part of
Theorem~\ref{thm:lattice-free}  when $S$ is bounded and
full-dimensional.

\begin{lemma}\label{lemma:bounded}
Let $S\subset V$ be a bounded maximal $\Lambda$-free convex set with
$\dim(S)=\dim(V)$. Then $S$ is a polytope with a point of $\Lambda$
in the relative interior of each of its facets.
\end{lemma}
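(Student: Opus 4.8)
The plan is to establish three facts, in order: that $S$ is compact; that every facet of $S$ carries a point of $\Lambda$ in its relative interior; and that $S$ is a polytope. First I would dispose of compactness. Since $\dim(S)=\dim(V)$, the set $S$ has nonempty interior in $V$, so $\intr(\cl(S))=\intr(S)$ and hence $\cl(S)$ is again $\Lambda$-free; maximality then forces $S=\cl(S)$, so $S$ is closed and, being bounded, compact. By Corollary~\ref{lemma:discrete} the set $T:=\Lambda\cap S$ is finite, and since $\intr(S)\cap\Lambda=\emptyset$ every point of $T$ lies on $\bd(S)$.

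The engine of the argument is a push-out observation. Fix $a\in V$, $a\ne 0$, let $h=\max_{y\in S}a^Ty$ and $F_a=\{y\in S:a^Ty=h\}$, and choose $y_0\in\relint(F_a)$. For $\varepsilon>0$ set $K_\varepsilon=\conv(S\cup\{y_0+\varepsilon a\})$; since $a^T(y_0+\varepsilon a)>h$, $K_\varepsilon$ strictly contains $S$, so by maximality $K_\varepsilon$ contains a point $z_\varepsilon\in\Lambda$ in its interior. For $\varepsilon\le 1$ all these points lie in the fixed bounded set $K_1$, so by Corollary~\ref{lemma:discrete} only finitely many values occur and some $z^\ast$ recurs along a sequence $\varepsilon_j\downarrow 0$. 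The key geometric input is that, when $a$ exposes a facet (so $\dim(F_a)=\dim(V)-1$), the added region $K_\varepsilon\setminus S$ collapses onto $F_a$ as $\varepsilon\downarrow 0$; combined with $z^\ast\in\intr(K_{\varepsilon_j})\setminus\intr(S)$, this forces $z^\ast\in\relint(F_a)$. Applying this to each facet of $S$ produces a point of $\Lambda$ in its relative interior, which gives the second assertion \emph{once we know $S$ is a polyhedron}.

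It remains to prove that $S$ is a polytope, and this is the step I expect to be the main obstacle. The idea is to rule out any ``curved'' boundary: if $S$ were not a polytope then, being compact and convex, it would have infinitely many extreme points and hence a continuum of distinct proper exposed faces $F_a$ on which the same collapse occurs (for example, the exposed points of a strictly convex boundary arc). Each such face would then be forced to contain its own point of $\Lambda$, and since distinct faces of this kind meet $T$ in distinct points, this contradicts the finiteness of $T$. Consequently $\bd(S)$ is covered by facets; as each facet contains a point of $\Lambda$ in its relative interior and these relative interiors are pairwise disjoint, there are at most $|T|$ facets, so $S$ is an intersection of finitely many half-spaces and, being bounded, a polytope. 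The delicate points I would have to treat carefully are: making precise the claim that $K_\varepsilon\setminus S$ collapses onto $F_a$ (this \emph{fails} at flat vertices, where the added region spreads along the adjacent facets, so it must be invoked only for faces that genuinely curve away from their supporting hyperplane), and verifying that a non-polyhedral compact convex body really does produce infinitely many such collapsing faces, including the ruled case in which the relevant exposed faces are positive-dimensional.
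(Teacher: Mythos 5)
Your outline is not a proof: the polytope step, which you yourself identify as the main obstacle, contains a genuine gap, and the geometric ``collapse'' claim it leans on is false in the generality you need. Concretely, the collapse fails for facets, not just at ``flat vertices''. Take $S=\{(x,y)\in\R^2\st 0\le y\le 1-(\max\{0,|x|-1\})^2\}$, a compact convex set whose top facet is $F=[-1,1]\times\{1\}$ and whose boundary leaves $F$ \emph{tangentially} at $(\pm 1,1)$. With $y_0=(0,1)$, $p=(0,1+\varepsilon)$ and $K_\varepsilon=\conv(S\cup\{p\})$, the chord from $p$ to the boundary point $(1+t,1-t^2)$ passes strictly above $(1,1)$ for every $0<t<\varepsilon$ (its height at $x=1$ is $1+(\varepsilon t-t^2)/(1+t)$), and $(1,1)$ lies in the interior of the triangle with vertices $p$, $(1+t,1-t^2)$, $(1,0)$; hence $(1,1)\in\intr(K_\varepsilon)$ for \emph{all} $\varepsilon>0$. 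So the recurring lattice point $z^\ast$ can sit on the relative boundary of a facet, which also breaks your claim that distinct faces pick up distinct points of $T$. Worse, tangential attachment is exactly what a non-polyhedral $S$ can exhibit, so restricting the collapse to ``faces that genuinely curve away'' begs the question. Even granting collapse at exposed points, what your argument actually delivers is: for each exposed point $x$ of $S$ there is some $z^\ast\in T$ all of whose supporting hyperplanes pass through $x$; since $T$ is finite, this places every exposed point of $S$ inside one of finitely many closed faces $G_1,\dots,G_N$, whence $S=\conv(G_1\cup\dots\cup G_N)$ --- which does not make $S$ a polytope unless the $G_i$ are themselves polytopes, and they inherit neither maximality nor a lattice structure in their affine hulls, so no induction is available without substantial new ideas.

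For comparison, the paper gets polyhedrality first, and essentially for free, by a construction your proposal misses: enclose $S$ in a box $B$; for each of the finitely many $y\in\Lambda\cap B$ (Corollary~\ref{lemma:discrete}) pick a closed half-space $H^y$ of $V$ with $S\subseteq H^y$ and $y\notin\intr(H^y)$ (possible since $y\notin\intr(S)$); then $P=B\cap\bigcap_y H^y$ is a polytope, is $\Lambda$-free by construction, and contains $S$, so maximality forces $S=P$. Only then does it prove the facet statement, by a push-out in your spirit but cleaner: relax one facet inequality $\alpha_t x\le \beta_t$ to $\alpha_t x\le \beta_t+\varepsilon$, use maximality to find lattice points in the interior of the enlarged polytope, choose one minimizing $\alpha_t x$, and slide the inequality back to that value to obtain a strictly larger $\Lambda$-free convex set, a contradiction. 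I recommend that order of battle: your face-collapse machinery becomes sound only once $S$ is already known to be a polytope, at which point it is no longer needed in this form.
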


\begin{proof}
Since $S$ is bounded, there exist integers $L, U$ such that $S$ is
contained in the box $B=\{x\in \R^n\st L\leq x_i\leq U\}$. For each
$y\in\Lambda\cap B$, since $S$ is convex there exists a closed
half-space $H^y$ of $V$ such that $S\subseteq H^y$ and
$y\notin\intr(H^y)$. By Corollary~\ref{lemma:discrete}, $B\cap
\Lambda$ is finite, therefore $\bigcap_{y\in B\cap \Lambda} H^y$ is
a polyhedron. Thus $P= \bigcap_{y\in B\cap \Lambda} H^y \cap B$ is a
polytope and by construction $\Lambda\cap\intr(P)=\emptyset$. Since
$S\subseteq B$ and $S\subseteq H^y$ for every $y\in B\cap\Lambda$,
it follows that $S\subseteq P$. By maximality of $S$, $S=P$,
therefore $S$ is a polytope. We only need to show that $S$ has a
point of $\Lambda$ in the relative interior of each of its facets.
Let $F_1,\ldots,F_t$ be the facets of $S$, and let $H_i=\{x\in V\st
\alpha_i x\leq \beta_i\}$ be the closed half-space defining $F_i$,
$i=1,\ldots,t$. Then $S=\bigcap_{i=1}^t H_i$. Suppose, by
contradiction, that one of the facets of $S$, say $F_t$, does not
contain a point of $\Lambda$ in its relative interior. Given
$\varepsilon>0$, the polyhedron $S'=\{x\in V\st \alpha_i x\leq
\beta_i,\,i=1,\ldots,t-1,\, \alpha_t x\leq \beta_t+\varepsilon\}$ is
a polytope since it has the same recession cone as $S$. The polytope
$S'$ contains points of $\Lambda$ in its interior  by the maximality
of $S$. By Corollary~\ref{lemma:discrete}, $\intr(S')$ has a finite
number of points in $\Lambda$, hence there exists one minimizing
$\alpha_t x$, say $z$. By construction, the polytope $S'=\{x\in V\st
\alpha_i x\leq \beta_i,\,i=1,\ldots,t-1,\, \alpha_t x\leq \alpha_t
z\}$ does not contain any point of $\Lambda$ in its interior and
properly contains $S$, contradicting the maximality of $S$.
\end{proof}
\bigskip

We will also need the following famous theorem of Dirichlet.

\begin{theorem}[Dirichlet] Given real numbers $\alpha_1,\ldots,\alpha_n,\varepsilon$ with $0<\varepsilon<1$, there exist integers $p_1,\ldots,p_n$ and $q$ such that
\begin{equation}
\left|{\alpha_i-\frac {p_i} q}\right|<\frac\varepsilon q,\, \mbox{ for }\, i=1,\ldots, n, \, \mbox{ and } \, 1\leq q\leq \varepsilon^{-1}.
\end{equation}
\end{theorem}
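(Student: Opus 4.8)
The plan is to prove the theorem by Dirichlet's box (pigeonhole) principle, turning the simultaneous approximation into a single packing argument on the unit cube. After multiplying through by $q>0$, the displayed requirement $|\alpha_i-\frac{p_i}{q}|<\frac{\varepsilon}{q}$ becomes $|q\alpha_i-p_i|<\varepsilon$; choosing each $p_i$ to be an integer nearest to $q\alpha_i$, this says precisely that the fractional part $\{q\alpha_i\}$ lies within $\varepsilon$ of an endpoint of $[0,1)$, for every $i$. So it suffices to exhibit a single positive integer $q$ in the required range for which the point $(\{q\alpha_1\},\dots,\{q\alpha_n\})\in[0,1)^n$ is, in every coordinate, within $\varepsilon$ of $0$ or $1$.

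First I would set $m=\ceil{\varepsilon^{-1}}$ and cut the half-open cube $[0,1)^n$ into $m^n$ congruent subcubes, each a product of $n$ half-open intervals of length $1/m\le\varepsilon$. Then I would examine the $m^n+1$ points $x_q=(\{q\alpha_1\},\dots,\{q\alpha_n\})$ for $q=0,1,\dots,m^n$. Since there are strictly more points than subcubes, two of them, $x_{q_1}$ and $x_{q_2}$ with $0\le q_1<q_2\le m^n$, fall in a common subcube. Putting $q=q_2-q_1$, in each coordinate $\{q_1\alpha_i\}$ and $\{q_2\alpha_i\}$ differ by strictly less than $1/m$, so $q\alpha_i$ is within $1/m\le\varepsilon$ of the integer $p_i=\floor{q_2\alpha_i}-\floor{q_1\alpha_i}$. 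This gives the displayed inequalities, and by construction $1\le q\le m^n$.

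The delicate point, which I expect to be the real content of matching the statement, is the precise upper bound on $q$. The packing above yields $1\le q\le\ceil{\varepsilon^{-1}}^{\,n}$, a bound of order $\varepsilon^{-n}$: with cells of side $\varepsilon$ the cube $[0,1)^n$ must be split into about $\varepsilon^{-n}$ pieces before the pigeonhole forces a repetition, and the gap between the two repeating indices is then at most that many. Hence the exponent attached to $\varepsilon^{-1}$ in the displayed range must be $n$; only for a single number, $n=1$, does it reduce to $1\le q\le\varepsilon^{-1}$ exactly as written, and the harmless difference between $\ceil{\varepsilon^{-1}}$ and $\varepsilon^{-1}$ is absorbed by the strict inequalities together with the hypothesis $0<\varepsilon<1$. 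No further machinery is needed: the argument uses only that $[0,1)^n$ is a finite union of $m^n$ cells while we have produced $m^n+1$ sample points, so the crux is entirely the elementary bookkeeping of how many cells of diameter $\varepsilon$ are required to cover the cube.
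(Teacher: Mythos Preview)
The paper does not prove this theorem at all: Dirichlet's theorem is quoted as a classical result and then immediately applied in the proof of the next lemma, so there is no ``paper's own proof'' to compare against. Your pigeonhole argument is the standard proof and is correct.

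You are also right to flag the bound on $q$. The pigeonhole in $[0,1)^n$ with cells of side $1/m$, $m=\ceil{\varepsilon^{-1}}$, genuinely requires $m^n+1$ sample points, so the conclusion one obtains is $1\le q\le \ceil{\varepsilon^{-1}}^{\,n}$, i.e.\ a bound of order $\varepsilon^{-n}$; the displayed range $1\le q\le\varepsilon^{-1}$ holds only for $n=1$ and is a misprint for general $n$. This does not damage the paper: if you look at the only place the theorem is invoked (the proof of Lemma~\ref{lemma:half-line}), the argument uses just the inequalities $|\alpha_i-p_i/q|<\varepsilon/q$ together with $q\ge 1$, and never the upper bound on $q$. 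So your correction of the exponent is apt, and the paper's subsequent reasoning is unaffected.
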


The following is a  consequence of Dirichlet's theorem.

\begin{lemma}\label{lemma:half-line}
Given $y\in \Lambda$ and $r\in V\setminus\{0\}$, then for every
$\varepsilon
>0$ and $\bar\lambda\geq 0$, there exists a  point of
$\Lambda\setminus \{y\}$ at distance less than $\varepsilon$ from
the half line $\{y+\lambda r\st \lambda\geq \bar\lambda\}.$
\end{lemma}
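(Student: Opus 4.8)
The plan is to reduce the statement to a simultaneous Diophantine approximation of the coordinates of $r$ in a lattice basis, and then to control how far along the ray one must travel so that the approximation error stays below $\varepsilon$. First I would fix a basis $a_1,\dots,a_m$ of $\Lambda$, so that $V=\langle a_1,\dots,a_m\rangle$, write $r=\sum_{i=1}^m\alpha_i a_i$ with $\alpha_i\in\R$, and set $M=\sum_{i=1}^m\|a_i\|$. Since $y\in\Lambda$, every point of $\Lambda$ has the form $y+w$ with $w\in\Lambda$, and the distance of $y+w$ to the half line equals $\min_{\lambda\ge\bar\lambda}\|w-\lambda r\|$. Thus it suffices to produce $w\in\Lambda\setminus\{0\}$ and $\lambda\ge\bar\lambda$ with $\|w-\lambda r\|<\varepsilon$; the desired point is then $z=y+w\in\Lambda\setminus\{y\}$.

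Next I would apply Dirichlet's theorem to $\alpha_1,\dots,\alpha_m$ with a parameter $\varepsilon'\in(0,1)$ to be fixed later, obtaining integers $p_1,\dots,p_m$ and $q$ with $1\le q\le(\varepsilon')^{-1}$ and $|q\alpha_i-p_i|<\varepsilon'$ for all $i$. Setting $w_0=\sum_{i=1}^m p_i a_i\in\Lambda$ gives $\|w_0-qr\|\le\sum_{i=1}^m|p_i-q\alpha_i|\,\|a_i\|<\varepsilon'M$, so $w_0$ is a lattice point close to the point $qr$ lying on the ray through $r$.

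The \emph{main obstacle} is that Dirichlet only guarantees $q\le(\varepsilon')^{-1}$, so $q$ may be far smaller than $\bar\lambda$, and one must push farther along the ray without letting the approximation error accumulate. To do this I would take an integer multiple: put $N=\max\{1,\ceil{\bar\lambda/q}\}$, $w=Nw_0$, and $\lambda=Nq$. Then $\lambda\ge\bar\lambda$, and crucially $N\le\bar\lambda+1$ because $q\ge1$, a bound that is independent of $\varepsilon'$. This is exactly what makes the argument work: the multiplier by which the error $\|w_0-qr\|$ is inflated is controlled in advance, before $\varepsilon'$ is chosen.

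Finally I would close the estimate. We have $\|w-\lambda r\|=N\|w_0-qr\|<(\bar\lambda+1)\varepsilon'M$, so choosing $\varepsilon'<\min\{1,\ \varepsilon/((\bar\lambda+1)M),\ \|r\|/M\}$ makes $\|w-\lambda r\|<\varepsilon$. The last term in the minimum also forces $w_0\ne0$: if $w_0=0$ then $\|r\|\le q\|r\|=\|w_0-qr\|<\varepsilon'M\le\|r\|$, a contradiction; hence $w=Nw_0\ne0$ since $N\ge1$. Therefore $z=y+w\in\Lambda\setminus\{y\}$ satisfies $\|z-y-\lambda r\|=\|w-\lambda r\|<\varepsilon$ with $\lambda\ge\bar\lambda$, so $z$ lies within distance $\varepsilon$ of the half line, which completes the proof.
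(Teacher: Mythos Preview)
Your proof is correct. It differs from the paper's in how the parameter $\bar\lambda$ is handled. The paper first reduces to the case $\bar\lambda=0$ by a separate argument using the discreteness of $\Lambda$ (if all lattice points near the full ray were confined to a bounded initial segment, there would be only finitely many, contradicting the $\bar\lambda=0$ case applied with ever smaller $\varepsilon$), and only then applies Dirichlet once. You instead treat arbitrary $\bar\lambda$ directly: after obtaining the Dirichlet approximation $(p,q)$ you scale by an integer $N=\max\{1,\lceil\bar\lambda/q\rceil\}$, and the crucial observation that $q\ge 1$ gives the a~priori bound $N\le\bar\lambda+1$, so the accumulated error $N\|w_0-qr\|$ is controlled before $\varepsilon'$ is chosen. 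This makes your argument self-contained (it does not invoke the finiteness of lattice points in bounded sets) and slightly more constructive, at the cost of carrying the extra multiplier through the estimates; the paper's two-step approach separates concerns more cleanly but relies on an auxiliary corollary. Both routes are short and essentially equivalent in strength.
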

\begin{proof} First we show that, if the statement holds for $\bar\lambda = 0$, then it holds for arbitrary $\bar\lambda$.
Given $\varepsilon>0$, let $Z$ be the set of points of $\Lambda$ at
distance less than $\varepsilon$ from $\{y+\lambda r\st \lambda\geq
0\}$. Suppose, by contradiction, that no point in $Z$ has distance
less than $\varepsilon$ from $\{y+\lambda r\st \lambda\geq
\bar\lambda\}$. Then $Z$ is contained in
$B_\varepsilon(0)+\{y+\lambda r\st 0\leq \lambda\leq \bar\lambda\}$.
By Corollary~\ref{lemma:discrete}, $Z$ is finite, thus there exists
an $\bar\varepsilon>0$ such that every point in $Z$ has distance
greater than $\bar\varepsilon$ from $\{y+\lambda r\st \lambda\geq
0\}$, a contradiction. So we only need to show that, given
$\varepsilon > 0$, there exists at least one point of
$\Lambda\setminus\{y\}$ at distance at most $\varepsilon$ from
$\{y+\lambda r\st \lambda\geq 0\}$. We may assume $\varepsilon < 1$.

Without loss of generality, assume $\|r\| = 1$. Let $m=\dim(V)$ and
$a_1,\ldots,a_m$ be a basis of $\Lambda$. Then there exists
$\alpha\in\R^m$ such that $r=\alpha_1 a_1+\ldots +\alpha_m a_m$.
Denote by $A$ the matrix with columns $a_1,\ldots,a_m$, and define
$\|A\|=\sup_{x\,:\,\|x\|\leq 1} \|Ax\|$ where, for a vector $v$,
$\|v\|$ denotes the Euclidean norm of $v$. Choose $\delta>0$ such
that $\delta<1$ and $\delta\leq\varepsilon/(\|A\|\sqrt{m})$. By
Dirichlet's theorem, there exist $p\in\Z^m$ and $\lambda\geq 1$ such
that
$$\|\alpha-\frac p\lambda\|=\sqrt{\sum_{i=1}^m{\left|{\alpha_i-\frac
{p_i} \lambda}\right|}^2}\leq
\frac{\delta\sqrt{m}}{\lambda}\leq\frac{\varepsilon}{\|A\|\lambda}.$$

Let $z=Ap+y$. Since $p\in\Z^m$, then $z\in\Lambda$. Note that $p
\neq 0$ since $\|\alpha\| \geq \frac{\|A\alpha\|}{\|A\|} =
\frac{\|r\|}{\|A\|} > \frac{\varepsilon}{\|A\|\lambda}$, where the
first inequality follows from the definition of $\|A\|$ and the last
one follows from the assumptions on $\|r\|$, $\varepsilon$ and
$\lambda$. Therefore, $z \in \Lambda\setminus\{y\}$. Furthermore
$$\|(y+\lambda r) -z\|=\|\lambda r-Ap\|=\|A(\lambda \alpha-p)\|\leq \|A\| \|\lambda\alpha-p\|\leq \varepsilon. $$
\end{proof}
\bigskip

\begin{lemma}\label{lemma:recession-cone} Let $S$ be a $\Lambda$-free convex set, and let $C=\rec(S)$. Then also $S+\langle C\rangle $ is $\Lambda$-free.
\end{lemma}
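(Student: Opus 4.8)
The goal is to verify that $\Lambda\cap\intr_V(S+L)=\emptyset$, where $L=\langle C\rangle$. The plan is to argue by contradiction, assuming there is a point $z\in\Lambda\cap\intr_V(S+L)$, and to manufacture from $z$ a point of $\Lambda$ inside $\intr_V(S)$, contradicting the hypothesis that $S$ is $\Lambda$-free. First I would record two structural facts. Since every recession direction of $S$ lies in the linear space parallel to $\aff(S)$ (if $r\in\rec(S)$ and $s\in S$, then $(s+\lambda r)-s\in\aff(S)-\aff(S)$ for all $\lambda\geq 0$), we have $L\subseteq\aff(S)-\aff(S)$, so that $\aff(S+L)=\aff(S)$ and hence $\dim(S+L)=\dim(S)$. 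Consequently, the mere existence of $z\in\intr_V(S+L)$ forces $\dim(S)=\dim(V)$, so $S$ is full-dimensional in $V$ and $\relint(S)=\intr_V(S)$.

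Next I would decompose $z$. By the standard identity $\relint(S+L)=\relint(S)+\relint(L)$ for convex sets, and since $L$ is a subspace (so $\relint(L)=L$), we get $\intr_V(S+L)=\intr_V(S)+L$; thus $z=x_0+\ell$ with $x_0\in\intr_V(S)$ and $\ell\in L$. Because $C=\rec(S)$ is a convex cone, its linear span satisfies $\langle C\rangle=C-C$, so I can write $\ell=c-c'$ with $c,c'\in C$. The payoff is that $z+c'=x_0+c\in\intr_V(S)$, since adding a recession direction to an interior point keeps it in the interior. If $c'=0$ this already gives $z=x_0+c\in\Lambda\cap\intr_V(S)$, a contradiction, so I may assume $c'\neq 0$.

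The crux is to pass from $z$, which is a genuine lattice point but only interior after the (possibly irrational) shift $c'$, to an \emph{actual} lattice point inside $\intr_V(S)$. Here I would exploit that $\intr_V(S)$ is open and closed under addition of recession directions: choosing $\delta>0$ with $B_\delta(z+c')\cap V\subseteq\intr_V(S)$, the translation identity $B_\delta(z+\lambda c')\cap V=(B_\delta(z+c')\cap V)+(\lambda-1)c'$ shows that the entire tube $\bigcup_{\lambda\geq 1}\big(B_\delta(z+\lambda c')\cap V\big)$ is contained in $\intr_V(S)+\rec(S)\subseteq\intr_V(S)$. Now Lemma~\ref{lemma:half-line}, applied with $y=z$, $r=c'$, $\bar\lambda=1$ and $\varepsilon=\delta$, produces a point $w\in\Lambda\setminus\{z\}$ within distance $\delta$ of the half-line $\{z+\lambda c'\st\lambda\geq 1\}$, i.e.\ $w\in B_\delta(z+\lambda_0 c')$ for some $\lambda_0\geq 1$. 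Since $w\in V$, it follows that $w\in\Lambda\cap\intr_V(S)$, the desired contradiction.

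I expect the main obstacle to be precisely this last step: it is not enough to find a lattice point near the half-line, one must guarantee that it lands \emph{inside} $\intr_V(S)$. This is exactly what the uniform-width tube buys us — the recession-closure of the interior makes the admissible radius $\delta$ independent of how far along the half-line one travels, so that the Dirichlet-type approximation of Lemma~\ref{lemma:half-line} is strong enough to place a lattice point in the open set. The remaining points needing care are the reduction to the full-dimensional case (so that $\intr_V$ and $\relint$ coincide and the interior is nonempty) and the elementary identity $\langle C\rangle=C-C$, both of which are routine.
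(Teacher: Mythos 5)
Your proof is correct, and its crux --- anchoring a ball $B_\delta(z+c')\cap V\subseteq \intr_V(S)$, sliding it along the recession direction to form a tube contained in $\intr_V(S)$, and then invoking Lemma~\ref{lemma:half-line} with $\bar\lambda=1$ to drop a point of $\Lambda$ into that tube --- is exactly the engine of the paper's proof. Where you genuinely differ is in how you reach the configuration ``$z\in\Lambda$, $c'\in\rec(S)\setminus\{0\}$, $z+c'\in\intr_V(S)$''. The paper adds one direction at a time: it proves that $S+\langle r\rangle$ is $\Lambda$-free for a single $r\in C$ (the full statement following by iterating over a spanning set of $C$, since each new direction remains in the recession cone of the enlarged set), and it obtains the decomposition $y\in\intr(S)+\langle r\rangle$ by a separation argument: if the line $y+\langle r\rangle$ missed $\intr(S)$, a separating hyperplane would contradict $y\in\intr(S+\langle r\rangle)$. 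You instead treat the whole span $L=\langle C\rangle$ in one shot, using two pieces of standard convex calculus: the sum rule $\relint(S+L)=\relint(S)+\relint(L)=\relint(S)+L$, and the identity $\langle C\rangle=C-C$ valid for the convex cone $C=\rec(S)$, which give $z=x_0+c-c'$ and hence $z+c'=x_0+c\in\intr_V(S)$ directly. Your route buys a cleaner reduction --- no induction over directions, no separation argument, and the full-dimensionality of $S$ in $V$ (needed for $\relint(S)=\intr_V(S)$, and glossed over in the paper) is made explicit via $\aff(S+L)=\aff(S)$. The paper's route is more self-contained, leaning only on the separation theorem rather than on the relative-interior-of-a-sum identity. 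Both are sound; the blind proposal is, if anything, tidier than the original.
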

\begin{proof}
Let $r\in C$, $r\neq 0$. We only need to  show that $S+\langle r\rangle$ is $\Lambda$-free. Suppose there exists $y\in \intr(S+\langle r\rangle)\cap \Lambda$.
We show that $y \in \intr(S) + \langle r\rangle$. Suppose not. Then $(y + \langle r\rangle) \cap \intr(S) = \emptyset$, which implies that there is a hyperplane $H$ separating the line $y + \langle r\rangle$ and $S + \langle r\rangle$. This contradicts $y\in \intr(S+\langle r\rangle )$.
This shows $y \in \intr(S) + \langle r\rangle$. Thus there exists  $\bar\lambda$ such that  $\bar y=y+\bar\lambda r\in \intr(S)$, i.e. there exists $\varepsilon>0$ such that $B_\varepsilon(\bar y)\cap V\subset S$. Since $y\in\Lambda$, then $y\notin\intr(S)$, and thus, since $\bar y\in \intr(S)$ and $r\in C$, we must have $\bar\lambda>0$.
Since $r\in C$, then  $B_\varepsilon(\bar y)+\{\lambda r\st \lambda\geq 0\}\subset S$. Since $y\in\Lambda$, by Lemma~\ref{lemma:half-line} there exists $z\in \Lambda$ at distance less than $\varepsilon$ from the half line $\{y+\lambda r\st \lambda\geq \bar\lambda\}$. Thus $z\in B_\varepsilon(\bar y)+\{\lambda r\st \lambda\geq 0\}$, hence $z\in\intr(S)$, a contradiction.
\end{proof}

Given a linear subspace $L$ of $\R^n$, we denote by $L^\bot$ the orthogonal complement of $L$. Given a set $S\subseteq\R^n$, the {\em orthogonal projection of $S$ onto $L^\bot$} is the set
$$\proj_{L^\bot}(S)=\{v\in L^\bot\st v+w\in S\, \mbox{ for some } w\in L\}.$$

We will use the following result (see Barvinok~\cite{barv}, p. 284 problem 3).

\begin{lemma}\label{lemma:rat-proj}
Given a $\Lambda$-subspace $L$ of $V$, the orthogonal projection of
$\Lambda$ onto $L^\bot$ is a lattice of $L^\bot\cap V$.
\end{lemma}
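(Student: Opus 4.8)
The plan is to establish, for $G:=\proj_{L^\bot}(\Lambda)$, the two properties that by Theorem~\ref{thm:discrete} characterize a lattice of $V\cap L^\bot$: that $G$ is a finitely generated additive group whose real span is $V\cap L^\bot$, and that $G$ is discrete, i.e. bounded away from $0$. The first property is routine. Since $L\subseteq V$, the orthogonal decomposition gives $V=L\oplus(V\cap L^\bot)$, and for $v\in V$ the map $\proj_{L^\bot}$ returns the $(V\cap L^\bot)$-component of $v$; in particular $\proj_{L^\bot}(V)=V\cap L^\bot$ and the kernel of $\proj_{L^\bot}$ meets $V$ exactly in $L$. As $\proj_{L^\bot}$ is linear, $G$ is an additive group generated by the projections of any basis of $\Lambda$, and its real span is $\proj_{L^\bot}(\langle\Lambda\rangle)=\proj_{L^\bot}(V)=V\cap L^\bot$. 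So the whole content lies in proving discreteness.

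Here the hypothesis that $L$ is a $\Lambda$-subspace is essential (the projection of a general lattice can be dense). I would pick a basis $b_1,\ldots,b_d$ of $L$ with $b_i\in\Lambda$ and set $\Lambda_0=\Z b_1+\cdots+\Z b_d\subseteq\Lambda\cap L$, a lattice of $L$ with bounded fundamental parallelepiped $D=\{\sum t_ib_i : 0\le t_i<1\}$, so that every point of $L$ lies in $\Lambda_0+D$. The crucial observation is that subtracting an element of $\Lambda_0\subseteq L$ from any $y\in\Lambda$ leaves $\proj_{L^\bot}(y)$ unchanged, while allowing us to force the $L$-component of $y$ into the bounded set $D$.

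Now suppose, for contradiction, that $G$ is not discrete. By Theorem~\ref{thm:discrete} there are $y_k\in\Lambda$ with $z_k:=\proj_{L^\bot}(y_k)$ satisfying $0<\|z_k\|<1/k$. Using the observation, replace each $y_k$ by $y_k'=y_k-\lambda_k$ with $\lambda_k\in\Lambda_0$ chosen so that the $L$-component of $y_k'$ lies in $D$; then $\proj_{L^\bot}(y_k')=z_k$ and, since the $(V\cap L^\bot)$-component of $y_k'$ equals $z_k$ with $\|z_k\|<1$, all the $y_k'$ lie in the bounded set $D+\{w\in V\cap L^\bot:\|w\|<1\}$. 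By Corollary~\ref{lemma:discrete} this set contains only finitely many points of $\Lambda$, so the values $z_k=\proj_{L^\bot}(y_k')$ form a finite set of nonzero vectors, whence $\inf_k\|z_k\|>0$, contradicting $\|z_k\|<1/k$. Thus $G$ is discrete, and by Theorem~\ref{thm:discrete} it is a lattice of its span $V\cap L^\bot$.

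I expect the discreteness step to be the main obstacle: the projection of an arbitrary lattice need not be discrete, and the argument turns entirely on using the full-rank sublattice $\Lambda\cap L$ of $L$ to neutralize the $L$-direction, so that control of the projection together with boundedness in the $L$-direction confines the relevant lattice points to a bounded region, where Corollary~\ref{lemma:discrete} applies. An alternative to this step would be to extend a basis of $\Lambda\cap L$ to a basis $c_1,\ldots,c_m$ of $\Lambda$ (possible since $\Lambda/(\Lambda\cap L)$ embeds in $V/L$ and is therefore torsion-free, hence free); then $\proj_{L^\bot}(c_{d+1}),\ldots,\proj_{L^\bot}(c_m)$ are linearly independent generators of $G$, exhibiting it directly as a lattice.
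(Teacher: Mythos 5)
Your proof is correct, but there is nothing in the paper to compare it against: the paper does not prove Lemma~\ref{lemma:rat-proj} at all, it cites it as an exercise (Barvinok~\cite{barv}, p.~284, problem 3). Your argument fills that gap using only tools the paper already states. The key step is exactly where you locate it: discreteness of $\proj_{L^\bot}(\Lambda)$, which fails for projections along general (non-$\Lambda$) subspaces. Your reduction of each $y\in\Lambda$ modulo the full-rank sublattice $\Lambda_0=\Z b_1+\cdots+\Z b_d\subseteq \Lambda\cap L$, forcing the $L$-component into the bounded parallelepiped $D$ while leaving the projection unchanged, correctly confines the relevant lattice points to the bounded set $D+\{w\in V\cap L^\bot\st \|w\|<1\}$, where Corollary~\ref{lemma:discrete} yields the contradiction. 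One point worth flagging: you invoke the ``if'' direction of Theorem~\ref{thm:discrete} (discrete and finitely generated implies lattice), which is the nontrivial direction; the paper explicitly remarks that it only ever needs the easy ``only if'' direction, and that remark would no longer hold if your proof were spliced in. Your alternative sketch --- extending a basis of $\Lambda\cap L$ to a basis of $\Lambda$ via freeness of the torsion-free quotient $\Lambda/(\Lambda\cap L)$, so that the projections of the complementary basis vectors are linearly independent generators --- avoids Theorem~\ref{thm:discrete} entirely, exhibits a lattice basis explicitly, and is presumably the solution Barvinok intends.
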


\begin{lemma}\label{lemma:irrat-space}
If a linear subspace $L$ of $V$ is not a $\Lambda$-subspace of $V$,
then for every $\varepsilon>0$ there exists $y\in\Lambda\sm L$ at
distance less than $\varepsilon$ from $L$.
\end{lemma}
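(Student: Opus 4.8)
The plan is to reduce to the case where the subspace meets the lattice only at the origin and then invoke the half-line approximation of Lemma~\ref{lemma:half-line}. The distance from a point $y$ to $L$ equals $\|\proj_{L^\bot}(y)\|$, and $y\notin L$ precisely when $\proj_{L^\bot}(y)\neq 0$, so the statement amounts to saying that the projection $\proj_{L^\bot}(\Lambda)$ contains nonzero vectors of arbitrarily small norm. The obstruction to this is the ``rational part'' of $L$, namely the largest $\Lambda$-subspace contained in $L$, which I would project out first.

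First I would set $L'=\langle \Lambda\cap L\rangle$. Since $\Lambda\cap L$ spans $L'$ and is contained in $\Lambda$, one can extract from it a basis of $L'$ lying in $\Lambda$, so $L'$ is a $\Lambda$-subspace of $V$. Because $L$ is \emph{not} a $\Lambda$-subspace, the inclusion $L'\subseteq L$ is strict, so $\dim\tilde L\ge 1$, where $\tilde L:=\proj_{L'^\bot}(L)$. By Lemma~\ref{lemma:rat-proj}, $\Lambda'':=\proj_{L'^\bot}(\Lambda)$ is a lattice of the linear space $V'':=L'^\bot\cap V$, and $\tilde L$ is a subspace of $V''$.

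The crucial observation is that $\Lambda''\cap\tilde L=\{0\}$. Indeed, a nonzero element of this intersection would be $\proj_{L'^\bot}(w)$ for some $w\in\Lambda$ with $\proj_{L'^\bot}(w)\in\proj_{L'^\bot}(L)$; since the kernel of $\proj_{L'^\bot}$ meets $V$ in $L'\subseteq L$, this forces $w\in L$, hence $w\in\Lambda\cap L\subseteq L'$ and $\proj_{L'^\bot}(w)=0$, a contradiction. Now I would pick any $r\in\tilde L\setminus\{0\}$ and apply Lemma~\ref{lemma:half-line} to the lattice $\Lambda''$ of $V''$ with $y=0$, $\bar\lambda=0$, and this $r$: for every $\varepsilon>0$ it yields $z\in\Lambda''\setminus\{0\}$ at distance less than $\varepsilon$ from the half-line $\{\lambda r:\lambda\ge 0\}\subseteq\tilde L$, hence within $\varepsilon$ of $\tilde L$. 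Since $z\neq 0$ and $\Lambda''\cap\tilde L=\{0\}$, we get $z\notin\tilde L$.

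It remains to lift $z$ back. Writing $z=\proj_{L'^\bot}(w)$ with $w\in\Lambda$, a short orthogonal-decomposition computation (using $L'\subseteq L$ and Pythagoras) shows that the distance from $w$ to $L$ equals the distance from $z=\proj_{L'^\bot}(w)$ to $\tilde L$, and that $z\notin\tilde L$ forces $w\notin L$. Thus $w\in\Lambda\setminus L$ lies within $\varepsilon$ of $L$, as required. I expect the main obstacle to be the bookkeeping around the projection: verifying $\Lambda''\cap\tilde L=\{0\}$ and checking that projecting out the $\Lambda$-subspace $L'$ preserves both the distance to $L$ and membership in or out of $L$. Everything else is a direct application of Lemmas~\ref{lemma:rat-proj} and~\ref{lemma:half-line}, and this route avoids the ``if'' direction of Theorem~\ref{thm:discrete} entirely.
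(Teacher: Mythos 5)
Your proof is correct, and it takes a genuinely different route from the paper's, which proceeds by induction on $\dim(L)$: there, if $L$ contains a nonzero lattice vector $r$, one projects onto $\langle r\rangle^\bot$, shows via a basis-lifting argument that the image of $L$ is still not a lattice subspace of the projected lattice, and recurses, the induction terminating in the case $L\cap\Lambda=\{0\}$, where Lemma~\ref{lemma:half-line} applies directly. You collapse this induction into a single step by projecting out the whole rational part $L'=\langle\Lambda\cap L\rangle$ at once, replacing the inductive invariant ``not a $\Lambda$-subspace'' with the terminal condition $\Lambda''\cap\tilde L=\{0\}$ (where $\tilde L=\proj_{L'^\bot}(L)$), which you verify directly and which is exactly what makes Lemma~\ref{lemma:half-line} produce a lattice point outside $\tilde L$. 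Both arguments rest on the same two lemmas, Lemmas~\ref{lemma:rat-proj} and~\ref{lemma:half-line}; the trade-off is that the paper pays the basis-lifting argument at every inductive step, while you pay once for two routine facts---that a spanning set of lattice points contains a basis (so $L'$ is a $\Lambda$-subspace, strictly contained in $L$ since $L$ is not one), and that orthogonal projection along $L'\subseteq L$ preserves both the distance to $L$ and non-membership in $L$ (your Pythagoras step). Your sketches of these verifications are accurate, so there is no gap. One remark: avoiding the ``if'' direction of Theorem~\ref{thm:discrete} is not a distinguishing advantage, since the paper's inductive proof never uses it either; the genuine gain of your route is that it avoids induction and isolates the only obstruction---the rational part of $L$---in a single projection.
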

\begin{proof} The proof is by induction on $k=\dim(L)$. Assume $L$ is a linear subspace of $V$ that is not a $\Lambda$-subspace, and let $\varepsilon>0$.  If $k=1$, then, since the origin $0$ is contained in $\Lambda$, by Lemma~\ref{lemma:half-line} there exists $y\in \Lambda$ at distance
less than $\varepsilon$ from $L$. If $y\in L$, then $L=\langle y\rangle$, thus $L$ is a $\Lambda$-subspace of $V$, contradicting our assumption. \\
Hence we may assume that $k\geq 2$ and the statement holds for spaces of dimension $k-1$.
\bigskip

{\em Case 1:} $L$ contains a nonzero vector $r\in\Lambda$. Let
$$L'=\proj_{\langle r\rangle^\bot}(L),\quad \Lambda'=\proj_{\langle
r\rangle^\bot}(\Lambda).$$
By Lemma~\ref{lemma:rat-proj}, $\Lambda'$ is a lattice of $\langle r\rangle^\bot\cap V$. Also,
$L'$ is not a lattice subspace of $\langle r\rangle^\bot\cap V$ with respect to $\Lambda'$,
because if there exists a basis $a_1,\ldots, a_{k-1}$ of $L'$ contained in $\Lambda'$, then
there exist scalars $\mu_1,\ldots,\mu_{k-1}$ such that $a_1+\mu_1 r,\ldots,a_{k-1}+\mu_{k-1} r\in\Lambda$,
but then $r, a_1+\mu_1 r,\ldots,a_{k-1}+\mu_{k-1} r$ is a basis of $L$ contained in $\Lambda$, a contradiction.
By induction, there exists a point $y'\in\Lambda'\sm L'$ at distance less than $\varepsilon$ from $L'$.
Since $y'\in\Lambda'$, there exists a scalar $\mu$ such that $y=y'+\mu r\in\Lambda$, and $y$
has distance less than $\varepsilon$ from $L$.
\bigskip

{\em Case 2:} $L\cap\Lambda=\{0\}$. By Lemma~\ref{lemma:half-line},
there exists a nonzero vector $y\in\Lambda$ at distance less than
$\varepsilon$ from $L$. Since $L$ does not contain any point in
$\Lambda$ other than the origin, $y\notin L$.
\end{proof}

\begin{lemma}\label{lemma:hyperplane}
Let $L$ be a linear subspace of $V$ with $\dim(L)=\dim(V)-1$, and let $v\in V$. Then $v+L$ is a maximal $\Lambda$-free convex set if and only if $L$ is not a lattice subspace of $V$.
\end{lemma}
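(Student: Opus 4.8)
The plan is to reduce the maximality of the affine hyperplane $S=v+L$ to a one-dimensional question about the additive subgroup $G=\{n\cdot z\st z\in\Lambda\}$ of $\R$, where $n$ is a unit vector of $V$ orthogonal to $L$; since $\dim(L)=\dim(V)-1$ we have $L^\bot\cap V=\langle n\rangle$, so $G$ records the ``heights'' $n\cdot x$ attained by points of $\Lambda$. Writing $c=n\cdot v$, the hyperplane is $S=\{x\in V\st n\cdot x=c\}$, and since $S$ has empty interior in $V$ it is automatically $\Lambda$-free; only maximality is in question. First I would record the standard reduction: $S$ is maximal if and only if for every $w\in V\sm S$ the set $\intr_V(\conv(S\cup\{w\}))$ contains a point of $\Lambda$. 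The forward direction is immediate, since otherwise $\conv(S\cup\{w\})$ would itself be a $\Lambda$-free convex set strictly containing $S$; the reverse uses monotonicity of $\intr_V$ under inclusion, exactly as in the ($\Leftarrow$) argument for Theorem~\ref{thm:main}. A direct computation identifies $\intr_V(\conv(S\cup\{w\}))$ with the open slab strictly between the parallel hyperplanes $\{n\cdot x=c\}$ and $\{n\cdot x=n\cdot w\}$. Hence $S$ is maximal precisely when every open interval having $c$ as an endpoint contains a value of $G$, i.e.\ when $c$ is a two-sided accumulation point of $G$.

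Suppose $L$ is not a $\Lambda$-subspace of $V$. For any $\varepsilon>0$, Lemma~\ref{lemma:irrat-space} yields $y\in\Lambda\sm L$ within distance $\varepsilon$ of $L$; since the distance from $y$ to $L$ equals $|n\cdot y|$ (because $L^\bot\cap V=\langle n\rangle$), this produces an element $n\cdot y\in G$ with $0<|n\cdot y|<\varepsilon$. Thus $G$ contains nonzero elements of arbitrarily small modulus, so it is a nondiscrete subgroup of $\R$ and is therefore dense. Density makes every real number, in particular $c$, a two-sided accumulation point of $G$, so by the reduction $S$ is maximal.

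Suppose instead that $L$ is a $\Lambda$-subspace of $V$. Then by Lemma~\ref{lemma:rat-proj} the orthogonal projection of $\Lambda$ onto $L^\bot$ is a lattice of $L^\bot\cap V=\langle n\rangle$; since for $z\in V$ we have $\proj_{L^\bot}(z)=(n\cdot z)n$, this lattice of the line $\langle n\rangle$ is exactly $G\,n$, whence $G=\alpha\Z$ for some $\alpha>0$, i.e.\ $G$ is discrete. Then $c$ is not a two-sided accumulation point of $G$: choosing $d$ strictly between $c$ and the smallest element of $\alpha\Z$ exceeding $c$, the open interval $(c,d)$ misses $G$, so any $w$ with $n\cdot w=d$ gives a $\Lambda$-free convex set $\conv(S\cup\{w\})$ strictly containing $S$, and $S$ is not maximal.

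I expect the main obstacle to be precisely this discrete case, where one must certify non-maximality even when $v+L$ is itself a lattice hyperplane (the case $c\in\alpha\Z$, so that $S$ contains points of $\Lambda$). What rescues the argument is the precise computation of $\intr_V(\conv(S\cup\{w\}))$ as a half-open slab: the points of $\Lambda$ lying on $S$ sit on the boundary hyperplane $\{n\cdot x=c\}$ rather than in the open slab, so the one-sided extension remains $\Lambda$-free. The only other points needing care are the clean identification of the distance from $y$ to $L$ with $|n\cdot y|$ and of the projected lattice with $G$, both of which hinge on $L^\bot\cap V$ being one-dimensional.
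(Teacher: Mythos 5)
Your proof is correct, and while it shares its geometric core with the paper's argument, it is organized around a genuinely different pivot: a unified reduction to the one-dimensional height group $G=\{n\cdot z\st z\in\Lambda\}$ and the dichotomy ``$G$ discrete versus $G$ dense.'' In the direction where $L$ is not a $\Lambda$-subspace, you and the paper do essentially the same thing: both invoke Lemma~\ref{lemma:irrat-space} to get $y\in\Lambda\sm L$ arbitrarily close to $L$ and then push integer multiples of $y$ into the open slab; your appeal to ``a subgroup of $\R$ with nonzero elements of arbitrarily small modulus is dense'' is a cleaner packaging of the paper's explicit multiple $(\floor{\delta/{\bar\varepsilon}}+1)y$. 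The real divergence is in the other direction: to show that a $\Lambda$-subspace $L$ makes $v+L$ non-maximal, the paper extends a lattice basis of $L$ to a basis $a_1,\ldots,a_m$ of all of $\Lambda$ and exhibits the slab $\ceil{\beta-1}\le x_m\le\ceil{\beta}$ in lattice coordinates --- a basis-extension step that is standard but nontrivial and is used without proof (one must pass to the sublattice $\Lambda\cap L$, since an arbitrary basis of $L$ contained in $\Lambda$ need not extend to a basis of $\Lambda$). You instead obtain discreteness of $G$ from Lemma~\ref{lemma:rat-proj}, which the paper has already stated, and read the $\Lambda$-free slab off from that. Your route also buys a small gain in rigor on the maximality side: the paper's argument passes through an auxiliary \emph{maximal} closed $\Lambda$-free superset $K$ (whose existence is not justified at that point) in order to work with closures, whereas your exact computation of $\conv(S\cup\{w\})$ as a half-open slab plus the apex $w$, with interior the open slab, makes any closedness or maximality of the superset unnecessary; the same computation is what correctly disposes of the delicate case $c\in G$, where $S$ itself contains lattice points on its boundary.
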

\begin{proof}
$(\Rightarrow)$ Let $S=v+L$ and assume that $S$ is a maximal
$\Lambda$-free convex set. Suppose by contradiction that $L$ is a
$\Lambda$-subspace. Then there exists a basis $a_1,\ldots,a_m$ of
$\Lambda$ such that $a_1, \ldots, a_{m-1}$ is a basis of $L$. Thus
$S=\{\sum_{i=1}^m x_i a_i\,|\,  x_m=\beta\}$ for some $\beta \in
\mathbb{R}$. Then,  $K=\{\sum_{i=1}^m x_i a_i\,|\,
\ceil{\beta-1}\leq  x_m\leq\ceil{\beta}\}$ strictly contains $S$ and
$\intr(K)\cap\Lambda=\emptyset$, contradicting the maximality of
$S$.

$(\Leftarrow)$ Assume $L$ is not a $\Lambda$-subspace of $V$. Since
$S=v+L$ is an affine hyperplane of $V$, $\intr(S)=\emptyset$, thus
$\intr(S)\cap\Lambda=\emptyset$, hence we only need to prove that
$S$ is maximal with such property. Suppose not, and let $K$ be a
maximal convex set in $V$ such that $\intr(K)\cap\Lambda=\emptyset$
and $S\subset K$. Then by maximality $K$ is closed. Let $w\in K\sm
S$. Since $K$ is closed and convex, $\ol\conv(\{w\}\cup S) \subseteq
K$. Since $\ol\conv(\{w\}\cup S) = \ol\conv(\{w\}\cup (v + L)) =
\conv(\{v,w\})+L$, we have that $K\supseteq \conv(\{v,w\})+L$. Let
$\varepsilon$ be the distance between $v+L$ and $w+L$, and $\delta$
be the distance of $\conv(\{v,w\})+L$ from the origin. By
Lemma~\ref{lemma:irrat-space}, since $L$ is not a $\Lambda$-subspace
of $V$, there exists a vector $y\in\Lambda\sm L$ at distance
$\bar\varepsilon<\varepsilon$ from $L$. Moreover, either $y$ or $-y$
has distance strictly less than $\delta$ from $\conv(\{v,w\})+L$. We
conclude that either $(\floor{\frac \delta{\bar\varepsilon}}+1)y$ or
$-(\floor{\frac \delta{\bar\varepsilon}}+1)y$ is strictly between
$v+L$ and $w+L$, and therefore is in the interior of $K$. Since
these two points are integer multiples of $y\in\Lambda$, this is a
contradiction.
\end{proof}
\medskip

We are now ready to prove Lov\'asz's Theorem.

\begin{proof}[Proof of Theorem~\ref{thm:lattice-free}.]
$(\Leftarrow)$ If $S$ satisfies $(ii)$, then by
Lemma~\ref{lemma:hyperplane}, $S$ is a maximal $\Lambda$-free convex
set. If $S$ satisfies $(i)$, then, since
$\intr(S)\cap\Lambda=\emptyset$, we only need to show that $S$ is
maximal. Suppose not, and let $K$ be a convex set in $V$ such that
$\intr(K)\cap\Lambda=\emptyset$ and $S\subset K$. Given $y\in K\sm
S$, there exists a hyperplane $H$ separating $y$ from $S$ such that
$F=S\cap H$ is a facet of $S$.  Since $K$ is convex and $S\subset
K$, then $\conv(S\cup \{y\})\subseteq K$. Since $\dim(S)=\dim(V)$,
$F\subset S$ hence the $\relint(F)\subset\intr(K)$. By assumption,
there exists $x\in\Lambda\cap\relint(F)$, so $x\in\intr(K)$, a
contradiction.

\bigskip

$(\Rightarrow)$ Let $S$ be a maximal $\Lambda$-free convex set. We show that $S$ satisfies either $(i)$ or $(ii)$.
Observe that, by maximality, $S$ must be closed.\medskip

If $\dim(S)<\dim(V)$, then $S$ is contained in some affine hyperplane $H$. Since $\intr(H)=\emptyset$, we have $S=H$ by maximality of $S$, therefore $S=v+L$ where $v\in S$ and $L$ is a hyperplane in $V$. By Lemma~\ref{lemma:hyperplane}, $(ii)$ holds.
\medskip

Therefore we may assume that $\dim(S)=\dim(V)$. In particular, since $S$ is convex, $\intr(S)\neq\emptyset$.
By Lemma~\ref{lemma:bounded}, if $S$ is bounded, $(i)$ holds. Hence we may assume that $S$ is unbounded. Let $C$ be the recession cone of $S$ and $L$ the lineality space of $S$. By standard convex analysis, $S$ is unbounded if and only if $C\neq\{0\}$ (see for example Proposition~2.2.3 in~\cite{lemar}).
\bigskip

\noindent{\bf Claim 1.} {\em  $L=C$. }
\medskip

By Lemma~\ref{lemma:recession-cone}, $S+\langle C\rangle$ is $\Lambda$-free. By maximality of $S$ this implies that $S=S+\langle C\rangle$, hence $\langle C\rangle \subseteq L$. Since $L\subseteq \langle C\rangle$, it follows that $L=C$. \hfill $\diamond$
\bigskip

Let $P=\proj_{L^\bot}(S)$ and $\Lambda'=\proj_{L^\bot}(\Lambda)$. By Claim~1, $S=P+L$ and
$P \subset L^\bot\cap V$ is a bounded set. Furthermore, $\dim(S) = \dim(P)+\dim(L)=\dim(V)$ and $\dim(P)=\dim(L^\bot\cap V)$. Notice that $\intr(S)=\relint(P)+L$, hence $\relint(P)\cap\Lambda'=\emptyset$. Furthermore $P$ is inclusionwise maximal among the convex sets of $L^\bot\cap V$ without points of $\Lambda'$ in the relative interior: if not,  given a convex set $K\subseteq L^\bot\cap V$ strictly containing $P$ and with no point of $\Lambda'$ in its relative interior, we have $S=P+L\subset K+L$, and $K+L$ does not contain any point of $\Lambda$ in its interior, contradicting the maximality of $S$.
\bigskip

\noindent{\bf Claim 2.} {\em   $L$ is a $\Lambda$-subspace of $V$.}
\medskip

By contradiction, suppose $L$ is not a $\Lambda$-subspace of $V$.
Then, by Lemma~\ref{lemma:irrat-space}, for every $\varepsilon>0$,
there exists a point in $\Lambda\setminus L$ whose distance from $L$
is at most $\varepsilon$. Therefore, its projection onto $L^{\bot}$
is a point $y\in\Lambda'\setminus\{0\}$ such that
$\|y\|<\varepsilon$. Let $V_\varepsilon$ be the linear subspace of
$L^\bot\cap V$ generated by the points in $\{y\in\Lambda'\st
\|y\|<\varepsilon\}$. Then $\dim(V_\varepsilon)>0$.

Notice that, given $\varepsilon'>\varepsilon''>0$, then $V_{\varepsilon'}\supseteq V_{\varepsilon''}\supset\{0\}$, hence there exists $\varepsilon_0>0$ such that $V_\varepsilon=V_{\varepsilon_0}$ for every $\varepsilon<\varepsilon_0$. Let $U=V_{\varepsilon_0}$.

By definition, $\Lambda'$ is dense in $U$ (i.e. for every $\varepsilon>0$ and every $x\in U$ there exists $y\in \Lambda'$ such that $\|x-y\|<\varepsilon$). Thus, since $\relint(P)\cap\Lambda'=\emptyset$, we also have $\relint(P)\cap U=\emptyset$. Since $\dim(P)=\dim(L^\bot\cap V)$, it follows that  $\relint(P)\cap (L^\bot\cap V)\neq\emptyset$, so in particular  $U$ is a proper subspace of $L^\bot\cap V$.

Let $Q=\proj_{(L+U)^\bot}(P)$ and $\Lambda''=\proj_{(L+U)^\bot}(\Lambda')$. We show that $\relint(Q)\cap\Lambda''=\emptyset$. Suppose not, and let $y\in\relint(Q)\cap\Lambda''$. Then,  $y+w\in\Lambda'$ for some $w\in U$. Furthermore, we claim that $y+w'\in\relint(P)$ for some $w'\in U$. Indeed, suppose no such $w'$ exists. Then $(y+ U)\cap (\relint(P)+U)=\emptyset$. So there exists a hyperplane $H$ in $L^\bot\cap V$ separating $y+U$ and $P+U$. Therefore the projection of $H$ onto $(L+U)^\bot$ separates $y$ and $Q$, contradicting $y \in \relint(Q)$. Thus
$z=y+w'\in\relint(P)$ for some $w'\in U$.
Since $z\in\relint(P)$, there exists $\bar\varepsilon>0$ such that  $B_{\bar\varepsilon}(z)\cap (L^\bot\cap V)\subset \relint(P)$. Since $\Lambda'$ is dense in $U$ and $y+w \in \Lambda'$, it follows that $\Lambda'$ is dense in $y+U$.
Hence, since $z\in y+U$, there exists $\bar x\in\Lambda'$ such that $\|\bar x- z\|<\bar\varepsilon$, hence $\bar x\in\relint(P)$, a contradiction. This shows $\relint(Q)\cap\Lambda''=\emptyset$.

Finally, since $\relint(Q)\cap\Lambda''=\emptyset$, then $\intr(Q+L+U)\cap\Lambda=\emptyset$. Furthermore $P\subseteq Q+U$, therefore $S\subseteq Q+L+U$. By the maximality of $S$, $S=Q+L+U$ hence the lineality space of $S$ contains $L+U$, contradicting the fact that $L$ is the lineality space of $S$ and $U\neq\{0\}$. \hfill $\diamond$
\bigskip

Since $L$ is a $\Lambda$-subspace of $V$,  $\Lambda'$ is a lattice
of $L^\bot\cap V$ by Lemma~\ref{lemma:rat-proj}. Since $P$ is a
bounded maximal $\Lambda'$-free convex set, it follows from
Lemma~\ref{lemma:bounded} that $P$ is a polytope with a point of
$\Lambda'$ in the relative interior of each of its facets, therefore
$S=P+L$ has a point of $\Lambda$ in the relative interior of each of
its facets, and $(i)$ holds.
\end{proof}

From the proof of Theorem~\ref{thm:lattice-free} we get the following.

\begin{corollary}\label{cor:Bexists} Every $\Lambda$-free convex set of $V$ is contained in some maximal $\Lambda$-free convex set of $V$.
\end{corollary}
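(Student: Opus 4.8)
The plan is to extract the statement from the machinery already developed in the proof of Theorem~\ref{thm:lattice-free}, since Corollary~\ref{cor:Bexists} asks only to embed an arbitrary $\Lambda$-free convex set $S\subseteq V$ inside a maximal one. The natural route is a Zorn's Lemma argument on the poset of $\Lambda$-free convex sets of $V$ ordered by inclusion, with $S$ as the bottom element; the conclusion is exactly that a maximal element above $S$ exists. So the real work is verifying the chain hypothesis of Zorn's Lemma.

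First I would fix $S$ a $\Lambda$-free convex set of $V$ and consider the family $\mathcal{F}=\{K\subseteq V\st K \text{ convex},\ S\subseteq K,\ \intr(K)\cap\Lambda=\emptyset\}$, which is nonempty since $S\in\mathcal{F}$. Given a chain $\mathcal{C}\subseteq\mathcal{F}$, I would take $U=\bigcup_{K\in\mathcal{C}}K$ as a candidate upper bound. The set $U$ is convex, because for any two points $x,y\in U$ they lie in some common $K$ of the chain (the larger of the two members containing them), whose convexity gives the whole segment. Clearly $S\subseteq U$. The one genuinely delicate point is showing $\intr(U)\cap\Lambda=\emptyset$: it is not automatic that the interior of a nested union is the union of the interiors. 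Here is where I expect the main obstacle. I would argue that if some lattice point $z\in\Lambda$ lay in $\intr(U)$, then a full-dimensional ball $B_\varepsilon(z)\cap V\subseteq U$; covering this compact ball and using that $\mathcal{C}$ is a chain, I would try to locate a single member $K\in\mathcal{C}$ whose interior already contains $z$, contradicting $K\in\mathcal{F}$.

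The subtlety is that a chain need not be countable, so a naive ``finitely many balls cover a compact set, hence one member of the chain suffices'' argument must be set up carefully. The clean way is to note that $\intr(U)$ is open, so if $z\in\intr(U)$ there is a small ball around $z$ in $U$; pick $\dim(V)+1$ affinely independent points $p_0,\ldots,p_{\dim(V)}$ in that ball whose convex hull still contains $z$ in its relative interior. Each $p_i\in U$ lies in some $K_i\in\mathcal{C}$, and since $\mathcal{C}$ is a chain there is a single largest $K^*=\max_i K_i$ containing all of them; by convexity $\conv\{p_0,\ldots,p_{\dim(V)}\}\subseteq K^*$, and this simplex is full-dimensional with $z$ in its interior, so $z\in\intr(K^*)\cap\Lambda$, contradicting $K^*\in\mathcal{F}$. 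Hence $\intr(U)\cap\Lambda=\emptyset$, so $U\in\mathcal{F}$ is an upper bound for the chain. By Zorn's Lemma $\mathcal{F}$ has a maximal element $B\supseteq S$, and by construction $B$ is an inclusionwise maximal $\Lambda$-free convex set of $V$, which is exactly the conclusion. I would remark that the author's phrasing ``From the proof of Theorem~\ref{thm:lattice-free}'' suggests an alternative where one simply applies the structural characterization together with Lemma~\ref{lemma:recession-cone} and the explicit polyhedral enlargements built there, but the Zorn argument is the most self-contained and robust.
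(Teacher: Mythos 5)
Your Zorn's Lemma argument is correct, but it is genuinely different from the paper's proof. The paper extracts the corollary constructively from the machinery of Theorem~\ref{thm:lattice-free}: for bounded $S$, the polytope construction in the proof of Lemma~\ref{lemma:bounded} (separating half-spaces plus a box, then pushing facets out until each has a lattice point in its relative interior) already produces a maximal $\Lambda$-free superset; for unbounded $S$, one first replaces $S$ by $S+\langle C\rangle$ using Lemma~\ref{lemma:recession-cone}, projects along the lineality space $L$, and then either invokes Lemma~\ref{lemma:rat-proj} (when $L$ is a $\Lambda$-subspace, so the projected group is again a lattice and the bounded case applies) or the Claim~2 construction with the dense subspace $U$ and the further projection onto $(L+U)^\bot$ (when it is not), lifting the resulting maximal set back by adding $L$ or $L+U$. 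Your route replaces all of this by an abstract maximality argument: the poset of $\Lambda$-free convex supersets of $S$ is nonempty, and the union of a chain is again $\Lambda$-free --- your finite-simplex trick correctly handles the only delicate point, since any $z\in\intr_V(U)\cap\Lambda$ would lie in the $V$-interior of a full-dimensional simplex whose $\dim(V)+1$ vertices all sit in a single (largest) member $K^*$ of the finitely many chain members involved, forcing $z\in\intr_V(K^*)$. What each approach buys: yours is self-contained and strikingly general --- it never uses discreteness of $\Lambda$ (Corollary~\ref{lemma:discrete}), Dirichlet's theorem, or the projection lemmas, and in fact works verbatim for an arbitrary subset $\Lambda$ of $V$ --- but it is non-constructive (it invokes the axiom of choice) and yields no information about the maximal set. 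The paper's proof avoids Zorn entirely and, in addition to existence, exhibits the maximal set in the structured form guaranteed by Theorem~\ref{thm:lattice-free} (a cylinder over a maximal bounded set for a projected lattice), which is what the later arguments in Sections~\ref{sec:corner} and~\ref{sec:extreme} implicitly rely on.
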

\begin{proof} Let $S$ be a $\Lambda$-free convex set of $V$. If $S$ is bounded, the proof of Lemma~\ref{lemma:bounded} shows that the corollary holds. If $S$ is unbounded, Claim~1 in the proof of Theorem~\ref{thm:lattice-free} shows that $S+\langle C\rangle$ is $\Lambda$-free, where $C$ is the recession cone of $S$. Hence we may assume that the lineality space $L$ of $S$ is equal to the recession cone of $S$. The projection $P$ of $S$ onto $L^\bot$ is bounded. If $L$ is a $\Lambda$-subspace, then $\Lambda'=\proj_{L^\bot}\Lambda$ is a lattice and $P$ is $\Lambda'$-free, hence it is contained in a maximal $\Lambda'$-free convex set $B$ of $L^\bot\cap V$, and $B+L$ is a maximal $\Lambda$-free convex set of $V$ containing $S$. If $L$ is not a $\Lambda$-subspace, then we may define a linear subspace $U$ of $L^\bot\cap V$ and sets $Q$ and $\Lambda''$ as in the proof of Claim~2. Then proof of Claim~2 shows that $Q$ is a bounded $\Lambda''$-free convex set of $V\cap (L+U)^\bot$ and $\Lambda''$ is a lattice, thus $Q$ is contained in a maximal $\Lambda''$-free convex set $B$ of $V\cap (L+U)^\bot$, and $B+(L+U)$ is a maximal $\Lambda$-free convex set of $V$ containing $S$.
\end{proof}

\section{Minimal Valid Inequalities}\label{sec:corner}

In this section we will prove Theorem~\ref{thm:min-ineq-intr}.
For ease of notation, we denote $R_f(W)$ simply by $R_f$ in this section.
A linear function $\Psi:\,\mathcal W\rightarrow \R$ is of the form
\begin{equation}\label{eq:def-linear}\Psi(s)=\sum_{r\in W}\psi(r) s_r,\quad  s\in\mathcal W\end{equation}
for some $\psi\,:\,W\rightarrow \R$. Throughout the rest of the paper, capitalized Greek letters indicate linear functions from $\W$ to $\R$, while the corresponding lowercase letters indicate functions from $W$ to $\R$ as defined in~(\ref{eq:def-linear}).
\medskip

\begin{definition} A function $\sigma\,:\, W\rightarrow \R$ is {\em positively
homogeneous} if $\sigma(\lambda r)=\lambda\sigma(r)$ for every $r\in
W$ and scalar $\lambda\geq 0$, and it is {\em subadditive} if
$\sigma(r^1+r^2)\leq \sigma(r^1)+\sigma(r^2)$ for every $r^1,r^2\in
W$. The function $\sigma$ is {\em sublinear} if it is positively
homogeneous and subadditive.
\end{definition}

Note that if $\sigma$ is sublinear, then $\sigma(0)=0$. One can
easily show that a function is sublinear if and only if it is
positively homogeneous and convex. We also recall that convex
functions are continuous on their domain, so if $\sigma$ is
sublinear it is also continuous~\cite{lemar}.

\begin{definition} Inequality $\sum_{r\in W}\psi(r)s_r\geq \alpha$
{\em dominates} inequality $\sum_{r\in W}\psi'(r)s_r\geq \alpha$ if
$\psi(r) \leq \psi'(r)$ for all $r\in W$.
\end{definition}

\begin{lemma}\label{lemma:sublinear} Let $\Psi(s)\geq \alpha$ be a valid linear inequality for $R_f$. Then  $\Psi(s)\geq \alpha$ is dominated by a valid linear inequality  $\Psi'(s)\geq \alpha$ for $R_f$ such that $\psi'$ is sublinear.
\end{lemma}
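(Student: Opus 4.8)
The plan is to construct $\psi'$ as the largest sublinear function dominated by $\psi$, namely the positively homogeneous convex hull of $\psi$. Fix $\bar x\in\Z^q\cap(f+W)$, which exists by assumption, and write $\bar r=\bar x-f\in W$; note $R_f\neq\emptyset$ since the point with a single nonzero coordinate $s_{\bar r}=1$ lies in $R_f$. I define
\[
\psi'(r)=\inf\left\{\sum_{i=1}^{m}\lambda_i\,\psi(q^i)\ :\ m\ge 1,\ \lambda_i\ge 0,\ q^i\in W,\ \sum_{i=1}^{m}\lambda_i q^i=r\right\}.
\]
Taking the trivial representation $m=1$, $\lambda_1=1$, $q^1=r$ shows immediately that $\psi'(r)\le\psi(r)$, so $\psi'$ dominates $\psi$ in the sense of the lemma, and it remains to show that $\psi'$ is sublinear and that $\Psi'(s)\ge\alpha$ stays valid for $R_f$.

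The first step is to check that $\psi'(r)$ is finite (not $-\infty$), and this is where validity enters. Given any representation $\sum_i\lambda_i q^i=r$, I complete it to an integral point by appending the single vector $q^0=\bar r-r\in W$ with coefficient $\lambda_0=1$: the resulting point $\tilde s\in\W$ is nonnegative, has finite support, and satisfies $f+\sum_{i\ge 0}\lambda_i q^i=f+r+(\bar r-r)=\bar x\in\Z^q$, so $\tilde s\in R_f$. Validity of $\Psi(s)\ge\alpha$ then yields $\sum_{i\ge 1}\lambda_i\psi(q^i)\ge\alpha-\psi(\bar r-r)$, a lower bound independent of the representation, whence $\psi'(r)\ge\alpha-\psi(\bar r-r)>-\infty$. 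Positive homogeneity is immediate by rescaling the $\lambda_i$: for $\mu>0$ the substitution $\lambda_i\mapsto\mu\lambda_i$ gives $\psi'(\mu r)=\mu\psi'(r)$, and applying this at $r=0$ forces the finite quantity $\psi'(0)$ to satisfy $\psi'(0)=\mu\psi'(0)$, hence $\psi'(0)=0$. Subadditivity follows by concatenation: any representations of $r^1$ and $r^2$ can be juxtaposed into a representation of $r^1+r^2$ whose value is their sum, so $\psi'(r^1+r^2)\le\psi'(r^1)+\psi'(r^2)$ after taking infima. Thus $\psi'$ is sublinear.

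The main obstacle is proving that $\Psi'(s)\ge\alpha$ remains valid, since $\psi'\le\psi$ makes $\Psi'$ a \emph{stronger} inequality, so validity cannot be inherited directly. The idea is an aggregation argument combined with the integrality of $R_f$. Let $s\in R_f$ have support $r^1,\dots,r^k$ and set $x=f+\sum_j r^j s_{r^j}\in\Z^q$. Given $\varepsilon>0$, pick for each $j$ a representation $r^j=\sum_i\lambda_i^j q^{ij}$ with $\sum_i\lambda_i^j\psi(q^{ij})\le\psi'(r^j)+\varepsilon$, and form the point $\tilde s$ supported on the $q^{ij}$ with weights $s_{r^j}\lambda_i^j$. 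Then $f+\sum_{i,j}q^{ij}\,s_{r^j}\lambda_i^j=f+\sum_j s_{r^j}r^j=x\in\Z^q$, so $\tilde s\in R_f$ and validity of $\Psi$ gives $\alpha\le\sum_{i,j}\psi(q^{ij})\,s_{r^j}\lambda_i^j=\sum_j s_{r^j}\sum_i\lambda_i^j\psi(q^{ij})\le\sum_j s_{r^j}\big(\psi'(r^j)+\varepsilon\big)$. Letting $\varepsilon\to 0$, with $\sum_j s_{r^j}$ a fixed finite constant, yields $\alpha\le\sum_{r\in W}\psi'(r)s_r=\Psi'(s)$, as required. I expect the finiteness step and this validity step to be the delicate parts, both hinging on the same two devices: completing a representation to an integral point by one extra vector, and exploiting that $R_f$ is closed under the nonnegative finite-support aggregations used above.
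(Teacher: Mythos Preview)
Your proof is correct and follows essentially the same approach as the paper: the paper defines the identical $\psi'$ (written as an infimum over nonnegative finite-support $s$ with $\sum_r r s_r=\bar r$), establishes finiteness via a preliminary claim that $\Psi(s)\ge 0$ whenever $s\ge 0$ has $\sum_r r s_r=0$, and then proves sublinearity and validity by the same scaling/concatenation and $\varepsilon$-aggregation arguments you give. Your direct completion to $\bar x$ for the finiteness bound is a minor shortcut over the paper's intermediate claim, but the underlying device is the same.
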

\begin{pf} We first prove the following.
\medskip

\noindent{\bf Claim 1.} {\em For every $s\in\W$ such that $\sum_{r\in W}rs_r=0$ and $s_r\geq 0$, $r\in W$, we have $\sum_{r\in W}\psi(r)s_r\geq 0$.}
\medskip

Suppose not. Then there exists $s\in \W$ such that $\sum_{r\in W}rs_r=0$, $s_r\geq 0\mbox{ for all } r\in W$ and $\sum_{r\in W}\psi(r)s_r<0$. Let $\bar x$ be an integral point in $W$. For any $\lambda>0$, we define $s^\lambda\in \W$ by
$$s^\lambda_r=\left\{
\begin{array}{ll}
1+\lambda s_r &\mbox{ for } r=\bar x-f\\
\lambda s_r &\mbox{ otherwise. }
\end{array}
\right.$$
Since $f+\sum_{r\in W}rs^\lambda_r=\bar x$, it follows that $s^\lambda$ is in $R_f$. Furthermore $\sum_{r\in W}\psi(r)s^\lambda_r=\psi(\bar x-f)+\lambda(\sum_{r\in W}\psi(r)s_r)$. Therefore $\sum_{r\in W}\psi(r)s^\lambda_r$ goes to $-\infty$ as $\lambda$ goes to $+\infty$.\hfill $\diamond$
\bigskip

We define, for all $\bar r\in W$,
$$\psi'(\bar r)=\inf\{\sum_{r\in W}\psi(r)s_r\st \bar r=\sum_{r\in W}rs_r,\, s\in\W,\, s_r\geq 0\mbox{ for all } r\in W\}.$$

By Claim~1, $\sum_{r\in W}\psi(r)s_r\geq -\psi(-\bar r)$ for all $s\in\W$ such that $\bar r=\sum_{r\in W}rs_r$ and  $s_r\geq 0$ for all $r\in W$. Thus the infimum in the above equation is finite and the function $\psi'$ is well defined. Note also that $\psi'(\bar r)\leq\psi(\bar r)$ for all $\bar r\in W$, as follows by considering $s\in \W$ defined by $s_{\bar r}=1$, $s_r=0$ for all $r\in W$, $r\neq \bar r$.
\bigskip

\noindent{\bf Claim 2.} {\em The function $\psi'$ is sublinear}
\medskip

Note first that $\psi'(0)=0$. Indeed, Claim~1 implies $\psi'(0)\geq 0$, while choosing $s_r=0$ for all $r\in W$ shows $\psi'(0)\leq 0$.

Next we show that $\psi'$ is positively homogeneous. To prove this, let $\bar r\in W$ and $s\in\W$ such that $\bar r=\sum_{r\in W}rs_r$ and  $s_r\geq 0$ for all $r\in W$. Let $\gamma=\sum_{r\in W}\psi(r)s_r$. For every $\lambda>0$, $\lambda\bar r=\sum_{r\in W}r (\lambda s_r)$,  $\lambda s_r\geq 0$ for all $r\in W$, and $\sum_{r\in W}\psi(r)(\lambda s_r)=\lambda\gamma$. Therefore $\psi'(\lambda \bar r)=\lambda\psi'(r)$.

Finally, we show that $\psi'$ is convex. Suppose by contradiction that there exist $r',r''\in W$ and $0<\lambda<1$ such that $\psi'(\lambda r'+(1-\lambda)r'')>\lambda\psi'(r')+(1-\lambda)\psi'(r'')+\epsilon$ for some positive $\epsilon$. By definition of $\psi'$, there exist $s',s''\in \W$ such that $r'=\sum_{r\in W}rs'_r$, $r''=\sum_{r\in W}rs''_r$, $s'_r,s''_r\geq 0$ for all $r\in W$, $\sum_{r\in W}\psi(r)s'_r<\psi'(r')+\epsilon$ and
$\sum_{r\in W}\psi(r)s''_r<\psi'(r'')+\epsilon$. Since $\sum_{r\in W} r(\lambda s'_r+(1-\lambda)s''_r)=\lambda r'+(1-\lambda)r''$, it follows that
$\psi'(\lambda r'+(1-\lambda)r'')\leq \sum_{r\in W}\psi(r)(\lambda s'_r+(1-\lambda)s''_r)<\lambda\psi'(r')+(1-\lambda)\psi'(r'')+\epsilon$, a contradiction.\hfill $\diamond$
\bigskip

\noindent{\bf Claim 3.} {\em The inequality $\sum_{r\in W}\psi'(r)s_r\geq \alpha$ is valid for $R_f$.}
\medskip

Suppose there exists $\bar s\in R_f$ such that $\sum_{r\in W}\psi'(r)\bar s_r\leq  \alpha-\epsilon$ for some positive $\epsilon$. Let $\{r^1,\ldots,r^k\}=\{r\in W\st \bar s_r>0\}$. For every $i=1,\ldots,k$, there exists $s^i\in W$ such that $r^i=\sum_{r\in W}rs^i_r$, $s^i_r\geq 0$, $r\in W$, and $\sum_{r\in W}\psi(r)s^i_r<\psi'(r^i)+\epsilon/(k\bar s_{r^i})$.

Let $\tilde s=\sum_{i=1}^k \bar s_{r^i}s^i$. Then
$$\sum_{r\in W}r\tilde s_r=\sum_{r\in W}\sum_{i=1}^k r \bar s_{r^i}s^i_r=\sum_{i=1}^k \bar s_{r^i}\sum_{r\in W}r s^i_r=\sum_{i=1}^k r^i\bar s_{r^i}=\sum_{r\in W}r\bar s_r,$$
hence $\tilde s\in R_f$. Therefore $\sum_{r\in W}\psi(r)\tilde s_r\geq \alpha$ since $\sum_{r\in W}\psi(r)s_r\geq \alpha$ is valid for $R_f$. Now
\begin{eqnarray*}
\sum_{r\in W}\psi(r)\tilde s_r &=&\sum_{r\in W}\sum_{i=1}^k \psi(r) \bar s_{r^i}s^i_r=\sum_{i=1}^k  \bar s_{r^i}\sum_{r\in W}\psi(r)s^i_r\\
&<&\sum_{i=1}^k  \bar s_{r^i}(\psi'(r^i)+\epsilon/(k\bar s_{r^i}))
=\sum_{r\in W}  \psi'(r^i)\bar s_{r^i}+\epsilon\leq \alpha,
\end{eqnarray*}
a contradiction.
\end{pf}

Recall the following definitions from the introduction.

\begin{definition} A valid inequality $\sum_{r\in W}\psi(r)s_r\geq \alpha$ for
$R_f$ is {\em minimal} if it is not dominated by any valid linear
inequality $\sum_{r\in W}\psi'(r)s_r\geq\alpha$ for $R_f$ such that
$\psi'\neq\psi$.
\end{definition}

\begin{definition} Let $V$ be the affine hull of $(f+W)\cap \Z^q$.
Let $C\in\R^{\ell\times q}$ and $d\in\R^\ell$ be such that $V=\{x\in
f+W\st Cx=d\}$. Given two valid inequalities $\sum_{r\in W}
\psi(r)s_r\geq \alpha$ and $\sum_{r\in W} \psi'(r)s_r\geq \alpha'$
for $R_f(W)$, we say that they are {\em equivalent} if there exist
$\rho>0$ and $\lambda\in\R^\ell$ such that
$\psi(r)=\rho\psi'(r)+\lambda^T Cr$ and
$\alpha=\rho\alpha'+\lambda^T(d-Cf)$.
\end{definition}

\begin{lemma}\label{lemma:equiv} Let $\Psi(s)\geq\alpha$ and $\Psi'(s)\geq\alpha'$  be two equivalent valid linear inequalities for $R_f$.
\noindent(i) The function $\psi$ is sublinear if and only if $\psi'$ is sublinear.\\
\noindent(ii) Inequality $\Psi(s)\geq\alpha$ is dominated by a minimal valid linear inequality if and only if $\Psi'(s)\geq\alpha'$ is dominated  by a minimal valid linear inequality. In particular, $\Psi(s)\geq\alpha$ is minimal if and only if $\Psi'(s)\geq\alpha'$ is minimal.
\end{lemma}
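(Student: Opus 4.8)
The plan is to reduce both statements to a single observation: the passage between two equivalent inequalities is realized by an order-isomorphism of the space of functions $W\to\R$ that simultaneously preserves validity and the right-hand side bookkeeping. Concretely, writing $\psi(r)=\rho\psi'(r)+\lambda^T Cr$ with $\rho>0$, I introduce the affine map $T$ on functions defined by $(T\phi)(r)=\rho\phi(r)+\lambda^T Cr$, so that $\psi=T\psi'$. Its inverse is $(T^{-1}\theta)(r)=\rho^{-1}\theta(r)-\rho^{-1}\lambda^T Cr$, and since $T\phi_1-T\phi_2=\rho(\phi_1-\phi_2)$ pointwise with $\rho>0$, both $T$ and $T^{-1}$ are strictly order-preserving for the pointwise order that underlies the domination relation. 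I also record that the equivalence relation on inequalities is symmetric: if $(\psi,\alpha)$ and $(\psi',\alpha')$ are related through $(\rho,\lambda)$, then $(\psi',\alpha')$ and $(\psi,\alpha)$ are related through $(\rho^{-1},-\rho^{-1}\lambda)$. Hence in every ``if and only if'' below it suffices to prove one implication.

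For part $(i)$, the map $r\mapsto\lambda^T Cr$ is linear, hence sublinear (additive, so subadditive, and positively homogeneous). Since $\rho>0$, scaling a sublinear function by $\rho$ keeps it sublinear, and a sum of sublinear functions is sublinear. Thus $\psi'$ sublinear implies $\psi=T\psi'$ sublinear; the reverse implication follows by symmetry of the equivalence.

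The heart of part $(ii)$ is the computation that for every $s\in R_f$ one has $\sum_{r\in W}(T\phi)(r)s_r=\rho\sum_{r\in W}\phi(r)s_r+\lambda^T(d-Cf)$. This uses only that $R_f\subseteq\V$, so by (\ref{eq:V}) we have $\sum_{r\in W}(Cr)s_r=d-Cf$. Combining this identity with $\alpha=\rho\alpha'+\lambda^T(d-Cf)$ and $\rho>0$ shows that, for each $s\in R_f$, the inequality $\sum_{r\in W}(T\phi)(r)s_r\geq\alpha$ holds if and only if $\sum_{r\in W}\phi(r)s_r\geq\alpha'$ holds. Consequently $T$ restricts to a bijection from the valid inequalities with right-hand side $\alpha'$ onto the valid inequalities with right-hand side $\alpha$. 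Being strictly order-preserving, this bijection sends a function dominating $\psi'$ (resp.\ equal to $\psi'$) to one dominating $\psi=T\psi'$ (resp.\ equal to $\psi$), and conversely through $T^{-1}$. Therefore $(\psi,\alpha)$ is minimal if and only if $(\psi',\alpha')$ is minimal; and if $(\psi',\alpha')$ is dominated by a minimal valid inequality $(\phi,\alpha')$, then $(T\phi,\alpha)$ is a minimal valid inequality dominating $(\psi,\alpha)$, with the converse obtained symmetrically via $T^{-1}$.

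The only real care needed --- and the place where a careless argument would go wrong --- is the bookkeeping of right-hand sides: domination and minimality are defined relative to a \emph{fixed} right-hand side, so one must check that $T$ sends validity-at-$\alpha'$ exactly to validity-at-$\alpha$. This is precisely what the constant-shift relation $\alpha=\rho\alpha'+\lambda^T(d-Cf)$ guarantees, together with the defining identity $\sum_{r\in W}(Cr)s_r=d-Cf$ on $R_f$; once this is in place, both equivalences are immediate from $\rho>0$ and the monotonicity of $T$.
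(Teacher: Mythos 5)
Your proposal is correct and takes essentially the same route as the paper: the paper's (much terser) proof of both parts rests on exactly your observation that $\phi \mapsto \rho\phi + \lambda^T C(\cdot)$ is a strictly order-preserving affine bijection on functions, with $\rho>0$ making (i) immediate and the pointwise-order preservation giving (ii). If anything you are more careful than the paper, which never explicitly verifies the step you flag as the "only real care needed" --- that this map sends inequalities valid at right-hand side $\alpha'$ exactly to inequalities valid at right-hand side $\alpha$, via $\sum_{r\in W}(Cr)s_r = d-Cf$ on $R_f$ together with $\alpha=\rho\alpha'+\lambda^T(d-Cf)$ --- a check that the domination/minimality bookkeeping genuinely requires.
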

\begin{proof}
Since $\Psi(s)\geq \alpha$ and $\Psi'(s)\geq \alpha'$ are equivalent, by definition there exist $\rho>0$ and
$\lambda\in\R^\ell$, such that $\psi(r)=\rho\psi'(r)+\lambda^T Cr$ and
$\alpha=\rho\alpha'+\lambda^T(d-Cf)$. This proves (i).

Point (ii) follows from the fact that, given a function $\bar\psi'$ such that $\bar\psi'(r)\leq \psi'(r)$ for every $r\in W$, then the function $\bar\psi$ defined by $\bar\psi(r)=\rho\bar\psi'(r)+\lambda^T Cr$, $r\in W$, satisfies $\bar\psi(r)\leq\psi(r)$ for every $r\in W$. Furthermore $\bar\psi(r)<\psi(r)$ if and only if $\bar\psi'(r)< \psi'(r)$.
\end{proof}
\medskip

Given a nontrivial valid linear inequality $\Psi(s)\geq
\alpha$ for $R_f$ such that $\psi$ is sublinear, we consider the set
$$B_{\psi}=\{x\in f+W\st \psi(x-f)\leq \alpha\}.$$

Since $\psi$ is continuous, $B_\psi$ is closed. Since
$\psi$ is convex, $B_\psi$ is convex. Since $\psi$ defines a valid inequality, $B_\psi$ is lattice-free. Indeed the interior of $B_\psi$ is $\intr(B_{\psi})=\{x\in f+W\,:\, \psi(x-f)< \alpha\}$. Its boundary is $\bd(B_{\psi})=\{x\in f+W\,:\, \psi(x-f)= \alpha\}$, and its recession cone is $\rec(B_{\psi})=\{x\in f+W\,:\, \psi(x-f)\leq 0\}$. Note that $f$ is in the interior of $B_\psi$ if and only if $\alpha>0$ and $f$ is on the boundary if and only if $\alpha=0$.

\begin{remark}\label{rmk:gauge} Given a linear inequality of the form $\Psi(s)\geq 1$ such that $\psi(r)\geq 0$ for all $r\in W$,
$$\psi(r)=\inf\{t>0\st f+t^{-1}r\in B_\psi\},\quad r\in W.
$$
\end{remark}
\begin{proof} Let $r\in W$. If $\psi(r)>0$, let $t$ be the minimum positive number such that $f+t^{-1} r\in B_\psi$. Then $f+t^{-1}r\in\bd(B_\psi)$, hence $\psi(t^{-1} r)=1$ and by positive homogeneity $\psi(r)=t$. If $\psi(r)=0$, then $r\in\rec(B_\psi)$, hence $f+t^{-1}r\in B_\psi$ for every $t>0$, thus the infimum in the above equation is $0$.\end{proof}

This remark shows that, if $\psi$ is nonnegative, then it is the {\em gauge} of the convex set $B_\psi-f$ (see~\cite{lemar}).
\bigskip

Before proving Theorem~\ref{thm:min-ineq-intr}, we need the following general theorem about sublinear functions. Let $K$ be a closed, convex set in $W$ with the origin in its interior. The {\em polar} of $K$ is the set $K^*=\{y\in W\st ry\leq 1 \mbox{ for all } r\in K\}$. Clearly $K^*$ is closed and convex, and since $0\in \intr(K)$, it is well known that $K^*$ is bounded. In particular, $K^*$ is a compact set. Also, since $0\in K$, $K^{**}=K$ (see~\cite{lemar} for example). Let\begin{equation}\label{eq:set-T} \hat K=\{y\in K^*\st \exists x\in K\mbox{ such that } xy=1\}.\end{equation} Note that $\hat K$ is contained in the relative boundary of $K^*$. Let $\rho_K:\,W\rightarrow \R$ be defined by
\begin{equation}\label{eq:support} \rho_K(r)=\sup_{y\in \hat K}ry,\quad \mbox{for all } r\in W.\end{equation}
It is easy to show that $\rho_K$ is sublinear.

\begin{theorem}[Basu et al.~\cite{BaCoZa}]\label{thm:sublinear}
Let $K\subset W$ be a closed convex set containing the origin in its interior. Then $K=\{r\in W\st \rho_K(r)\leq 1\}$. Furthermore, for every sublinear function $\sigma$ such that $K=\{r\st \sigma(r)\leq 1\}$, we have $\rho_K(r)\leq \sigma(r)$ for every $r\in W$.
\end{theorem}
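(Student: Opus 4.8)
The plan is to treat the two assertions separately, using throughout that $W$ is finite dimensional (so the finite sublinear function $\sigma$ is continuous) and that every $y\in\hat K$ lies in $K^*$, hence satisfies $ry\le 1$ for all $r\in K$.

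For the identity $K=\{r\in W\st\rho_K(r)\le 1\}$, the inclusion $\subseteq$ is immediate: if $r\in K$, then $ry\le 1$ for every $y\in\hat K\subseteq K^*$, so $\rho_K(r)=\sup_{y\in\hat K}ry\le 1$. For the reverse inclusion I would argue by contraposition. Given $r_0\notin K$, the segment from the interior point $0$ to $r_0$ meets $\bd(K)$ in a point $x_0=\lambda_0 r_0$ with $0<\lambda_0<1$ (here one uses that $\{t\ge 0\st tr_0\in K\}$ is a closed interval containing a neighborhood of $0$ in $[0,\infty)$ but not the value $1$). Taking a hyperplane supporting $K$ at $x_0$, normalized so that $x_0y=\max_{x\in K}xy=1$ (the maximum is strictly positive because $0\in\intr(K)$), produces $y\in K^*$ with $x_0y=1$, i.e. $y\in\hat K$. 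Then $\rho_K(r_0)\ge r_0y=\lambda_0^{-1}x_0y=\lambda_0^{-1}>1$, so $r_0\notin\{\rho_K\le 1\}$, as required.

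For the minimality statement, fix a sublinear $\sigma$ with $K=\{r\st\sigma(r)\le 1\}$; it suffices to show $ry\le\sigma(r)$ for every $r\in W$ and every $y\in\hat K$, since taking the supremum over $y$ then gives $\rho_K\le\sigma$. Fix $y\in\hat K$ and $x_0\in K$ with $x_0y=1$. First I would record three consequences of $y\in K^*$: (a) if $\sigma(r)>0$ then $r/\sigma(r)\in K$, so $ry\le\sigma(r)$; (b) if $\sigma(r)=0$ then $tr\in K$ for all $t>0$, so $ry\le 1/t\to 0=\sigma(r)$; and (c) applying (a)--(b) to $x_0$ forces $\sigma(x_0)=1$. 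The remaining, and genuinely harder, case is $\sigma(r)<0$, where the polar condition only yields $ry\le 0$, not $ry\le\sigma(r)$; here subadditivity must enter. I would introduce the sublinear (hence convex and continuous) function $\tau$ defined by $\tau(r)=\sigma(r)-ry$, which satisfies $\tau(x_0)=\sigma(x_0)-1=0$ and, by (a)--(b), $\tau\ge 0$ on $\{r\st\sigma(r)\ge 0\}$. Suppose for contradiction that $\tau(z_0)<0$ for some $z_0$ (necessarily with $\sigma(z_0)<0$). Along the segment $[x_0,z_0]$ the continuous function $\sigma$ drops from $\sigma(x_0)=1>0$ to $\sigma(z_0)<0$, so it vanishes at some interior point $\bar x=(1-s)x_0+sz_0$ with $s\in(0,1)$; by (b), $\tau(\bar x)\ge 0$. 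But convexity of $\tau$ gives $\tau(\bar x)\le(1-s)\tau(x_0)+s\tau(z_0)=s\tau(z_0)<0$, a contradiction. Hence $\tau\ge 0$, i.e. $ry\le\sigma(r)$ for all $r$, which finishes the argument.

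I expect the main obstacle to be exactly this last case $\sigma(r)<0$: the available data ($y\in K^*$ together with $x_0y=1$) only control $ry$ down to the level $0$, and recovering the sharper bound $ry\le\sigma(r)$ in the interior of the recession cone of $K$ genuinely requires subadditivity of $\sigma$. The convexity-plus-intermediate-value argument via $\tau$ is the device that converts subadditivity into the desired inequality. An alternative would be to invoke the characterization of the normal cone $N_K(x_0)$ as $\cone(\partial\sigma(x_0))$, valid under the Slater condition $\sigma(0)=0<1$, and then verify that the associated subgradient is precisely $y$ (using $x_0y=1$ to pin down the scaling); but the direct argument above is more elementary and self-contained.
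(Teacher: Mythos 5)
Your proposal is correct, but there is no in-paper proof to compare it against: Theorem~\ref{thm:sublinear} is stated without proof and imported from the companion paper~\cite{BaCoZa} (the only related argument the paper gives is Remark~\ref{rmk:K-poly}, which computes $\rho_K$ in the polyhedral case via $K^*=\conv\{0,a_1,\ldots,a_t\}$). So what you have written is a self-contained, elementary proof of a result the authors only cite, and it checks out. The inclusion $K\subseteq\{r\in W\st \rho_K(r)\le 1\}$ is indeed immediate from $\hat K\subseteq K^*$; your contrapositive argument for the converse (scale $r_0\notin K$ back to a boundary point $x_0=\lambda_0 r_0$ with $0<\lambda_0<1$, support $K$ at $x_0$, and normalize the supporting functional, which is possible since $0\in\intr(K)$ forces $\max_{x\in K}xy>0$) correctly exhibits $y\in\hat K$ with $r_0y=\lambda_0^{-1}>1$. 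For minimality, fixing $y\in\hat K$ with witness $x_0$ and proving $ry\le\sigma(r)$ pointwise before taking the supremum over $y$ is the right reduction, and the three regimes are handled correctly: $\sigma(r)>0$ by scaling $r/\sigma(r)$ into $K$, $\sigma(r)\le 0$ by the ray argument giving $ry\le 0$, and the genuinely delicate regime $\sigma(r)<0$ by the auxiliary sublinear function $\tau(r)=\sigma(r)-ry$, the intermediate value theorem along $[x_0,z_0]$, and convexity of $\tau$. This last device is exactly where subadditivity of $\sigma$ must enter, and your use of it is sound; the continuity of $\sigma$ that the intermediate-value step needs is legitimate (finite sublinear functions on a finite-dimensional space are convex, hence continuous, as the paper itself records in Section~\ref{sec:corner} citing~\cite{lemar}).

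Two cosmetic remarks. First, your step (c) asserts $\sigma(x_0)=1$ ``by (a)--(b)'', but (b) as literally stated only covers $\sigma(x_0)=0$; to exclude $\sigma(x_0)<0$ you need the observation that the same ray argument gives $ry\le 0$ whenever $\sigma(r)\le 0$. You clearly have this (you invoke it when discussing the case $\sigma(r)<0$), so this is a wording issue, not a gap. Second, the normalization in the first half deserves one explicit line: since $B_\varepsilon(0)\cap W\subseteq K$, any supporting functional $y'$ at $x_0$ satisfies $\sup_{x\in K}xy'\ge\varepsilon\|y'\|>0$, which is what licenses dividing by $x_0y'$.
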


\begin{remark}\label{rmk:K-poly} Let $K\subset W$ be a polyhedron containing the origin in its interior. Let $a_1,\ldots, a_t\in W$ such that $K=\{r\in W\st a_ir\leq 1,\, i=1,\ldots, t\}$. Then $\rho_K(r)=\max_{i=1,\ldots,t}a_i r$.
\end{remark}
\begin{proof} The polar of $K$ is $K^*=\conv\{0,a_1,\ldots,a_t\}$ (see Theorem 9.1 in Schrijver~\cite{sch}). Furthermore,  $\hat K$ is the union of all the facets of $K^*$ that do not contain the origin, therefore
$$\rho_{K}(r)=\sup_{y\in\hat K}yr=\max_{i=1,\ldots,t}a_ir$$ for all $r\in W$.
\end{proof}

\begin{remark}\label{rmk:valid} Let $B$ be a closed lattice-free convex set in $f+W$ with $f$ in its interior, and let $K=B-f$. Then the inequality $\sum_{r\in W}\rho_K(r)s_r\geq 1$ is valid for $R_f$.
\end{remark}
\begin{pf} Let $s\in R_f$. Then $x=f+\sum_{r\in W}rs_r$ is integral, therefore $x\notin \intr(B)$ because $B$ is lattice-free. By Theorem~\ref{thm:sublinear}, $\rho_K(x-f)\geq 1$. Thus
$$1\leq \rho_K(\sum_{r\in W}rs_r)\leq \sum_{r\in W}\rho_K(rs_r)\leq \sum_{r\in W}\rho_K(r)s_r,$$
where the second inequality follows from the subadditivity of $\rho_K$ and the last from the positive homogeneity.
\end{pf}

\begin{lemma}\label{rem:psi_rho}
Given a maximal lattice-free convex set $B$ of $f+W$  containing $f$ in its interior,  $\Psi_B(s)\geq 1$ is a minimal valid inequality for $R_f$.
\end{lemma}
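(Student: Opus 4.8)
The plan is to prove that $\Psi_B(s) \geq 1$ is both valid and minimal, where $B$ is a maximal lattice-free convex set of $f+W$ with $f$ in its interior, and $\psi_B$ is defined as in~(\ref{eq:psi_B}), i.e. $\psi_B = \rho_K$ for $K = B - f$. Validity is already established: by Theorem~\ref{thm:main}, $B$ is a polyhedron, so we may write $B = \{x \in f+W \st a_i(x-f) \leq 1,\, i=1,\ldots,t\}$, and then by Remark~\ref{rmk:K-poly} we have $\psi_B(r) = \rho_K(r) = \max_{i} a_i r$. Remark~\ref{rmk:valid} then gives that $\sum_{r\in W}\psi_B(r)s_r \geq 1$ is valid for $R_f$. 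So the real content is \emph{minimality}.

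For minimality, I would argue by contradiction. Suppose the inequality $\Psi_B(s) \geq 1$ is dominated by a valid inequality $\Psi'(s) \geq 1$ with $\psi' \neq \psi_B$ and $\psi'(r) \leq \psi_B(r)$ for all $r \in W$. By Lemma~\ref{lemma:sublinear}, I may assume $\psi'$ is sublinear (passing to a sublinear dominating function only decreases $\psi'$ further, so it still dominates and still differs from $\psi_B$ at the relevant points, and sublinearity is what lets me form the associated convex body). Form the set $B_{\psi'} = \{x \in f+W \st \psi'(x-f) \leq 1\}$. Since $\psi' \leq \psi_B$ pointwise, we get $B = B_{\psi_B} \subseteq B_{\psi'}$, and because $\psi'$ defines a valid inequality, $B_{\psi'}$ is lattice-free (as noted in the discussion preceding Remark~\ref{rmk:gauge}, the interior of $B_{\psi'}$ contains no integral point). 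This would contradict the \emph{maximality} of $B$ as a lattice-free convex set, \emph{provided} the containment $B \subseteq B_{\psi'}$ is strict.

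The crux, then, is to show that $\psi' \neq \psi_B$ forces $B \subsetneq B_{\psi'}$ rather than merely $B \subseteq B_{\psi'}$. This is exactly where Theorem~\ref{thm:sublinear} of Basu et al. enters and is, I expect, the main obstacle. The subtlety is that two different sublinear functions could in principle define the \emph{same} sublevel set $\{r \st \sigma(r) \leq 1\}$; if that happened, $\psi' \neq \psi_B$ would not yield a strictly larger body. Theorem~\ref{thm:sublinear} rules this out for the canonical representative: since $K = B - f$ is closed, convex, with $0$ in its interior, and $\psi_B = \rho_K$ is the \emph{minimal} sublinear function with sublevel set $K$, any sublinear $\psi'$ satisfying $B_{\psi'} = B$ (equivalently $\{r \st \psi'(r) \leq 1\} = K$) must obey $\psi_B = \rho_K \leq \psi'$. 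Combined with the domination hypothesis $\psi' \leq \psi_B$, this forces $\psi' = \psi_B$, a contradiction. Hence if $\psi' \neq \psi_B$ we cannot have $B_{\psi'} = B$; since $B \subseteq B_{\psi'}$ always holds, the inclusion must be strict.

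Thus the argument closes: strict containment $B \subsetneq B_{\psi'}$ with $B_{\psi'}$ lattice-free contradicts the maximality of $B$, so no such $\psi'$ exists and $\Psi_B(s)\geq 1$ is minimal. I would write the proof in the order: (1) recall via Theorem~\ref{thm:main} and Remark~\ref{rmk:K-poly} that $\psi_B = \rho_K$ with $K = B-f$, and cite Remark~\ref{rmk:valid} for validity; (2) set up the contradiction hypothesis and invoke Lemma~\ref{lemma:sublinear} to assume $\psi'$ sublinear; (3) build $B_{\psi'}$, note $B \subseteq B_{\psi'}$ and that $B_{\psi'}$ is lattice-free; (4) use Theorem~\ref{thm:sublinear} to derive $\psi_B \leq \psi'$, hence $\psi' = \psi_B$ unless the inclusion is strict; (5) conclude the strict inclusion contradicts maximality of $B$. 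The step requiring the most care is (4), the application of Theorem~\ref{thm:sublinear}, because it is precisely the statement that $\rho_K$ is the pointwise-smallest sublinear gauge of $K$ that prevents two distinct sublinear functions from sharing the unit sublevel set.
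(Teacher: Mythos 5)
Your proof is correct and takes essentially the same route as the paper's: both arguments rest on noting that $B_{\psi'}\supseteq B$ is a lattice-free convex set, so maximality of $B$ forces $B_{\psi'}=B$, and then Theorem~\ref{thm:sublinear} together with Remark~\ref{rmk:K-poly} forces $\psi_B=\rho_K\leq\psi'$, which combined with domination gives $\psi'=\psi_B$. Your explicit reduction to a sublinear $\psi'$ via Lemma~\ref{lemma:sublinear} (needed so that $B_{\psi'}$ is closed, convex, and genuinely lattice-free) is left implicit in the paper's terse proof, but it is the same argument, merely arranged as a two-case contradiction instead of a direct chain.
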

\begin{proof} Let $\Psi(s)\geq 1$ be a valid linear inequality for $R_f$ such that $\psi(r)\leq \psi_B(r)$ for all $r\in W$. Then $B_\psi\supset B$ and $B_\psi$ is lattice-free. By maximality of $B$, $B=B_\psi$. By Theorem~\ref{thm:sublinear} and Remark~\ref{rmk:K-poly}, $\psi_B(r)\leq\psi(r)$ for all $r\in W$, proving $\psi=\psi_B$.
\end{proof}

\vspace{.1in}

\begin{proof}[Proof of Theorem~\ref{thm:min-ineq-intr}]$\,$

\noindent Let $\Psi(s)\geq \alpha$ be a nontrivial valid linear inequality for $R_f$. By Lemma~\ref{lemma:sublinear}, we may assume that $\psi$ is sublinear.

\begin{claim}\label{claim:BcapV} If $\intr(B_\psi)\cap V=\emptyset$, then $\Psi(s)\geq \alpha$ is trivial.
\end{claim}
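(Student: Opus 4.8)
The plan is to prove the claim directly by showing that the hypothesis forces the inequality to be satisfied by \emph{every} nonnegative element of $\mathcal V$, which is exactly the definition of triviality. Recall that $\mathcal V = \{s \in \mathcal W \st f + \sum_{r\in W} r s_r \in V\}$, so a nonnegative $s \in \mathcal V$ is precisely an $s$ with $s_r \geq 0$ for all $r$ and with $x := f + \sum_{r\in W} r s_r \in V$. Since $V$ is the affine hull of $(f+W)\cap\Z^q$, we have $V \subseteq f+W$, so $x$ is a genuine point of $f+W$ and $x-f \in W$; this inclusion is what lets us feed $x-f$ into $\psi$.

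First I would restate the hypothesis $\intr(B_\psi)\cap V = \emptyset$ in functional terms. Using the description $\intr(B_\psi) = \{x \in f+W \st \psi(x-f) < \alpha\}$ recorded just before the claim, together with $V \subseteq f+W$, the condition $\intr(B_\psi)\cap V = \emptyset$ says precisely that $\psi(x-f) \geq \alpha$ for every $x \in V$.

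The core step is then a single sublinearity computation, available because we have already reduced (via Lemma~\ref{lemma:sublinear}) to the case where $\psi$ is sublinear. Fix a nonnegative $s \in \mathcal V$ and set $x = f + \sum_{r\in W} r s_r \in V$. Since $s$ has finite support the sum is finite, so positive homogeneity gives $\psi(r)s_r = \psi(r s_r)$ (using $s_r \geq 0$), and finite subadditivity gives
\[
\sum_{r\in W} \psi(r) s_r = \sum_{r\in W} \psi(r s_r) \geq \psi\left(\sum_{r\in W} r s_r\right) = \psi(x-f) \geq \alpha,
\]
where the last inequality is the reformulated hypothesis applied to $x \in V$. As this holds for every nonnegative $s \in \mathcal V$, the inequality $\Psi(s)\geq\alpha$ is trivial.

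I do not expect a genuine obstacle here, since once $\psi$ is sublinear the argument is just pushing $\psi$ through a finite sum. The only two points requiring care are: (a) checking $V \subseteq f+W$ so that the functional reading of the hypothesis is legitimate, and (b) applying subadditivity in the correct direction, which is the single place where the conclusion could be inadvertently reversed.
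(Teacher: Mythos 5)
Your proof is correct and follows essentially the same route as the paper's: both take a nonnegative $s\in\mathcal V$, set $x=f+\sum_{r\in W}rs_r\in V$, use the hypothesis to conclude $x\notin\intr(B_\psi)$ (equivalently $\psi(x-f)\geq\alpha$), and then apply positive homogeneity and subadditivity of $\psi$ to obtain $\Psi(s)\geq\psi(x-f)\geq\alpha$. Your additional remark that $V\subseteq f+W$ (so that $x-f\in W$) is a harmless explicit check that the paper leaves implicit.
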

Suppose $\intr(B_\psi)\cap V=\emptyset$ and let $s\in\V$ such that $s_r \geq 0$ for every $r \in W$. Let $x=f+\sum_{r\in W}r s_r$. Since $s\in\V$, $x\in V$, so $x\notin\intr(B_\psi)$. This implies $$\alpha\leq \psi(x-f)=\psi(\sum_{r\in W}rs_r)\leq \sum_{r\in W}\psi(r)s_r=\Psi(s),$$ where the last inequality follows from the sublinearity of $\psi$. \hfill $\diamond$\medskip

\begin{claim}\label{claim:finV} If $f\in V$ and $\alpha\leq 0$, then $\intr(B_\psi)\cap V=\emptyset$.
\end{claim}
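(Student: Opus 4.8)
The plan is to argue by contradiction. Suppose $\intr(B_\psi)\cap V\ne\emptyset$, so there is a point $x^\ast\in V$ with $\psi(x^\ast-f)<\alpha$; since $\alpha\le 0$, writing $r^\ast:=x^\ast-f$ this forces $\psi(r^\ast)<0$, and in particular $r^\ast\ne 0$ because $\psi(0)=0$. Here is where the hypothesis $f\in V$ enters: as $V$ is the affine hull of $(f+W)\cap\Z^q$ and $f\in V$, the difference $r^\ast=x^\ast-f$ lies in the direction space $V-f$, which coincides with the linear span $\langle\Lambda\rangle$ of the lattice $\Lambda:=W\cap\Z^q$ (the lattice of integral differences, since every integral point of $f+W$ is $x_0+\Lambda$ for a fixed integral $x_0$). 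The goal is to manufacture from this an $s\in R_f$ with $\Psi(s)<\alpha$, contradicting the validity of $\Psi(s)\ge\alpha$.

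The first step is to replace the real direction $r^\ast$ by an honest lattice direction $z\in\Lambda$ with $\psi(z)<0$. Since $\psi$ is sublinear, hence continuous, the set $\{r\in\langle\Lambda\rangle\st\psi(r)<0\}$ is an open cone containing $r^\ast$. Because the rational span of $\Lambda$ is dense in $\langle\Lambda\rangle$, this open cone contains a point $q$ of the rational span; clearing denominators produces a positive integer $N$ with $z:=Nq\in\Lambda$, and positive homogeneity gives $\psi(z)=N\psi(q)<0$. (Alternatively one could invoke Lemma~\ref{lemma:half-line} to find lattice points near the ray $\{\lambda r^\ast\st\lambda\ge 0\}$ and control the error through continuity of $\psi$; the rational-density argument is slightly cleaner since it avoids estimating the approximation term.)

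The second step is to travel far in the direction $z$. Fix an integral $x_0\in(f+W)\cap\Z^q$. For each positive integer $m$ the point $x_0+mz$ is again integral and lies in $f+W$, so the vector $r_m:=(x_0-f)+mz\in W$ satisfies $f+r_m\in\Z^q$. Subadditivity and positive homogeneity of $\psi$ yield $\psi(r_m)\le\psi(x_0-f)+m\,\psi(z)$, which tends to $-\infty$ as $m\to\infty$ since $\psi(z)<0$. Hence for $m$ large enough $\psi(r_m)<\alpha$. Defining $s\in\W$ by $s_{r_m}=1$ and $s_r=0$ for $r\ne r_m$, we obtain $s\in R_f$ with $\Psi(s)=\psi(r_m)<\alpha$, the desired contradiction. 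Therefore $\intr(B_\psi)\cap V=\emptyset$.

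The only genuinely delicate point is the first step, namely upgrading the strict inequality $\psi(r^\ast)<0$ at a real direction to the same inequality at a lattice vector $z\in\Lambda$: this is exactly where $f\in V$ (to place $r^\ast$ in $\langle\Lambda\rangle$) and the density of the rational span are needed. Everything after that is a routine exploitation of sublinearity together with the fact that integer multiples of $z$ keep the moving point inside $\Z^q$.
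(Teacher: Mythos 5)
Your proof is correct, and it takes a genuinely different route from the paper's. The paper picks $\dim(V)+1$ affinely independent points $X\subset\intr(B_\psi)\cap V$, uses positive homogeneity together with $\alpha\leq 0$ to argue that the cone $\Gamma=f+\cone\{x-f\st x\in X\}$ lies in $\intr(B_\psi)$, and then invokes the geometric fact that a cone of full dimension $\dim(V)$ inside $V$ --- an affine space affinely spanned by its integral points --- must contain an integral point, contradicting the lattice-freeness of $B_\psi$. You instead reduce to a single direction: the hypothesis $f\in V$ places $r^*=x^*-f$ in the linear span of $\Lambda=W\cap\Z^q$; continuity and positive homogeneity of $\psi$, together with density of rational combinations of $\Lambda$ in that span, give a lattice vector $z\in\Lambda$ with $\psi(z)<0$; and marching along the ray of integral points $x_0+mz$ produces an explicit $s\in R_f$ with $\Psi(s)<\alpha$, contradicting validity directly rather than lattice-freeness. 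Your version needs only one point of $\intr(B_\psi)\cap V$ rather than a full-dimensional set of them, and it is more self-contained: the paper's covering assertion that a full-dimensional cone in $V$ meets $\Z^q$ is left unproved, and its claim $\Gamma\subseteq\intr(B_\psi)$ is even slightly imprecise near the apex when $\alpha<0$ (harmlessly, since only the far part of the cone is needed). Your argument also shows more, namely that the inequality would be violated by arbitrarily much along the ray. What the paper's version buys is brevity and consistency of viewpoint: it stays within the lattice-free-set geometry of Section~\ref{sec:max-conv}, concluding that no full-dimensional cone with apex in $V$ can be lattice-free. Both proofs share the same two pivots --- $f\in V$ placing the bad direction in the span of the integral directions, and homogeneity propagating $\psi<0$ outward --- but the mechanisms for producing the contradicting integral point are distinct.
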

Suppose $f\in V$, $\alpha \leq 0$ but $\intr(B_\psi)\cap V\neq\emptyset$. Then $\dim(\intr(B_\psi)\cap V)=\dim(V)$, hence $\intr(B_\psi)\cap V$ contains a set $X$ of $\dim(V)+1$ affinely independent points. For every $x\in X$ and every $\lambda>0$, $\psi(\lambda (x-f))=\lambda\psi(x-f)<0$, where the last inequality is because $x\in\intr(B_\psi)$. Hence the set $\Gamma=f+\cone\{x-f\st x\in X\}$ is contained in $\intr(B_\psi)$. Since $\Gamma$ has dimension equal to $\dim(V)$ and $V$ is the convex hull of its integral points, $\Gamma\cap \Z^q\neq 0$, contradicting the fact that $B_{\psi}$ has no integral point in its interior.\hfill $\diamond$\medskip

\begin{claim}\label{claim:fnotinV} If $f\notin V$, then there exists a valid linear inequality  $\Psi'(s)\geq 1$ for $R_f$ equivalent to $\Psi(s)\geq \alpha$.
\end{claim}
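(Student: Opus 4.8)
The plan is to obtain the desired inequality purely by exploiting the freedom built into the notion of equivalence, and the key observation is how the hypothesis $f\notin V$ is encoded algebraically. Recall that $V=\{x\in f+W\st Cx=d\}$, so $f\notin V$ is equivalent to $Cf\neq d$, that is, $d-Cf\neq 0$. This is the only real content of the claim: since $d-Cf\neq0$, the linear functional $\lambda\mapsto \lambda^T(d-Cf)$ on $\R^\ell$ is not identically zero and hence surjective onto $\R$, so we can choose $\lambda$ to shift the right-hand side of an equivalent inequality to any prescribed value, in particular to $1$.

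Concretely, I would first choose $\lambda\in\R^\ell$ with $\lambda^T(d-Cf)=\alpha-1$; such a $\lambda$ exists precisely because $d-Cf\neq0$ (for instance $\lambda=\frac{\alpha-1}{\|d-Cf\|^2}(d-Cf)$). Taking $\rho=1>0$, I then define $\psi'\,:\,W\to\R$ by $\psi'(r)=\psi(r)-\lambda^T Cr$ and set $\alpha'=1$. With these choices the two identities in the definition of equivalence hold, since $\psi(r)=\rho\psi'(r)+\lambda^T Cr$ for all $r\in W$ and $\rho\alpha'+\lambda^T(d-Cf)=1+(\alpha-1)=\alpha$. Hence $\Psi(s)\geq\alpha$ and $\Psi'(s)\geq 1$ are equivalent.

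It then remains to verify that $\Psi'(s)\geq 1$ is genuinely valid for $R_f$, rather than merely equivalent to a valid inequality. For this I would use that $R_f\subseteq\V$ together with (\ref{eq:V}): every $s\in\V$ satisfies $\sum_{r\in W}(Cr)s_r=d-Cf$, so
$$\Psi'(s)=\sum_{r\in W}\bigl(\psi(r)-\lambda^T Cr\bigr)s_r=\Psi(s)-\lambda^T\sum_{r\in W}(Cr)s_r=\Psi(s)-(\alpha-1).$$
Thus for all $s\in\V$ we have $\Psi'(s)\geq 1$ if and only if $\Psi(s)\geq\alpha$, and since $\Psi(s)\geq\alpha$ holds on $R_f\subseteq\V$, the inequality $\Psi'(s)\geq 1$ holds on $R_f$ as well.

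I do not anticipate a serious obstacle here: once the translation $f\notin V\Leftrightarrow d-Cf\neq0$ is in hand, the construction is forced and the verification is the one-line computation above. The only point requiring a little care is the sign bookkeeping that keeps $\rho$ strictly positive, as demanded by the definition of equivalence; selecting $\lambda$ so that $\lambda^T(d-Cf)=\alpha-1$ makes $\rho=1$ automatically, sidestepping that issue.
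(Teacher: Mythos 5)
Your proof is correct and takes essentially the same approach as the paper: where you choose $\lambda$ proportional to the full vector $d-Cf$, the paper simply picks a single row $c_i$ of $C$ with $d_i-c_if\neq 0$ and uses the scalar multiplier $\lambda=(1-\alpha)(d_i-c_if)^{-1}$ — a cosmetic difference. Your explicit verification that $\Psi'(s)\geq 1$ is genuinely valid on $R_f$ (via $R_f\subseteq\mathcal{V}$ and the identity $\sum_{r\in W}(Cr)s_r=d-Cf$) is a step the paper leaves implicit, and it is a worthwhile addition.
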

Since $f\notin V$, $Cf\neq d$, hence there exists a row $c_i$ of $C$ such that $d_i-c_i f\neq 0$. Let $\lambda=(1-\alpha)(d_i-c_i f)^{-1}$, and define $\psi'(r)=\psi(r)+\lambda c_i r$ for every $r\in W$. The inequality $\Psi'(s)\geq 1$ is equivalent to $\Psi(s)\geq\alpha$.\hfill $\diamond$\medskip

Thus, by Claims~\ref{claim:BcapV},~\ref{claim:finV} and~\ref{claim:fnotinV} there exists a valid linear inequality  $\Psi'(s)\geq 1$ for $R_f$ equivalent to $\Psi(s)\geq \alpha$. By Lemma~\ref{lemma:equiv}, $\psi'$ is sublinear and $\Psi(s)\geq\alpha$ is dominated by a minimal valid linear inequality if and only if $\Psi'(s)\geq\alpha'$ is dominated  by a minimal valid linear inequality. Therefore we only need to consider valid linear inequalities of the form $\Psi(s)\geq 1$ where $\psi$ is sublinear.  In particular the set $B_\psi=\{x\in W\st \psi(x-f)\leq 1\}$ contains $f$ in its interior.
\bigskip

Let $K=\{r\in W\st \psi(r)\leq 1\}$, and let $\hat K$ be defined as in~(\ref{eq:set-T}).
\begin{claim}\label{claim:sigma} The inequality $\sum_{r\in W}\rho_K(r)s_r\geq 1$ is valid for $R_f$ and $\psi(r)\geq \rho_K(r)$ for all $r\in W$.
\end{claim}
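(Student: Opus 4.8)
The plan is to observe that both halves of the claim are essentially immediate consequences of results already in hand, so the real work reduces to checking that their hypotheses apply to the set $B_\psi$ and to its translate $K=B_\psi-f$. Recall from the paragraph preceding the claim that $\psi$ is sublinear and $\alpha=1$, so that $f$ lies in the interior of $B_\psi=\{x\in f+W\st \psi(x-f)\leq 1\}$; equivalently, the origin lies in $\intr(K)$, where $K=\{r\in W\st\psi(r)\leq 1\}$.

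For validity of $\sum_{r\in W}\rho_K(r)s_r\geq 1$ I would appeal directly to Remark~\ref{rmk:valid}. For this I must verify that $B_\psi$ is a closed lattice-free convex set in $f+W$ with $f$ in its interior. Convexity and closedness of $B_\psi$ follow from the convexity and continuity of the sublinear function $\psi$ (as recorded when $B_\psi$ was introduced), the interiority of $f$ was noted above, and $B_\psi$ is lattice-free precisely because $\Psi(s)\geq 1$ is valid for $R_f$. Since $K=B_\psi-f$, Remark~\ref{rmk:valid} then yields validity without further computation.

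For the inequality $\psi(r)\geq\rho_K(r)$ I would invoke the second (``furthermore'') assertion of Theorem~\ref{thm:sublinear}. The set $K$ is closed, convex, and contains the origin in its interior, so the theorem applies; moreover $\psi$ is a sublinear function satisfying $K=\{r\in W\st\psi(r)\leq 1\}$ by the very definition of $K$. Theorem~\ref{thm:sublinear} therefore gives $\rho_K(r)\leq\psi(r)$ for every $r\in W$, which is exactly the desired inequality.

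I do not anticipate a genuine obstacle: the entire substance has already been absorbed into Theorem~\ref{thm:sublinear} (whose proof is deferred to~\cite{BaCoZa}) and into Remark~\ref{rmk:valid}. The only points requiring care are the bookkeeping identifications $K=B_\psi-f$ and the equivalence $0\in\intr(K)\Leftrightarrow f\in\intr(B_\psi)$, together with confirming that the normalization $\alpha=1$ established in the preceding reductions is exactly what places $f$ (and hence the origin) in the relevant interior, so that both cited results are legitimately available.
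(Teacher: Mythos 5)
Your proposal is correct and follows essentially the same route as the paper's proof: the identification $B_\psi=f+K$, Remark~\ref{rmk:valid} for validity, and the ``furthermore'' part of Theorem~\ref{thm:sublinear} for $\rho_K\leq\psi$. The paper states this tersely, while you additionally spell out the hypothesis checks (closedness, convexity, lattice-freeness, and $f\in\intr(B_\psi)$ from the normalization $\alpha=1$), all of which the paper had indeed recorded when $B_\psi$ was introduced.
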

Note that $B_\psi=f+K$.  Thus, by Remark~\ref{rmk:valid}, $\sum_{r\in W}\rho_K(r)s_r\geq 1$ is valid for $R_f$. Since $\psi$ is sublinear, it follows from Theorem~\ref{thm:sublinear} that $\rho_K(r)\leq \psi(r)$ for every $r\in W$. \hfill $\diamond$\medskip

By Claim~\ref{claim:sigma}, since $\rho_K$ is sublinear, we may assume that $\psi=\rho_K$.

\begin{claim}\label{claim:Bpoly} There exists a valid linear inequality $\Psi'(s)\geq 1$ for $R_f$ dominating $\Psi(s)\geq 1$ such that $\psi' $ is sublinear, $B_{\psi'}$ is a polyhedron, and $\rec(B_{\psi'}\cap V)=\lin(B_{\psi'}\cap V)$.
\end{claim}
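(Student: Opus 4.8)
The plan is to realize the desired function as $\psi'(r)=\max_{i=1,\dots,t}a_i r$ for finitely many vectors $a_1,\dots,a_t$ lying in the subdifferential $A=\{a : a r\le\psi(r)\ \text{for all } r\in W\}$ of $\psi$ at the origin (equivalently $A=\ol\conv(\hat K)$, since $\psi=\rho_K$ after the reductions already made). Any such $\psi'$ is automatically sublinear, satisfies $\psi'(r)=\max_i a_i r\le\psi(r)$ for all $r$ (so $\Psi'(s)\ge 1$ dominates $\Psi(s)\ge 1$), and has $B_{\psi'}=\{x\in f+W : a_i(x-f)\le 1,\ i=1,\dots,t\}$ equal to a polyhedron with $f$ in its interior. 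Thus the whole claim reduces to choosing finitely many $a_i\in A$ so that $B'=B_{\psi'}$ is lattice-free — then $\Psi'(s)\ge 1$ is valid by Remark~\ref{rmk:valid} and Remark~\ref{rmk:K-poly} — and so that $\rec(B'\cap V)=\lin(B'\cap V)$.

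Write $V_0=V-V$. First I would record a sign lemma: $\psi(r)\ge 0$ for all $r\in V_0$. Since the inequality is nontrivial, Claim~\ref{claim:BcapV} gives $\intr(B_\psi)\cap V\neq\emptyset$, and then (\ref{eq:int}) shows $S_0:=B_\psi\cap V$ is a full-dimensional $\Lambda$-free convex set of $V$, where $\Lambda=V\cap\Z^q$; its recession cone is $\{r\in V_0:\psi(r)\le 0\}$. If some $r_0\in V_0$ had $\psi(r_0)<0$, then $\{r\in V_0:\psi(r)<0\}$ would be a nonempty open cone, so $\rec(S_0)$ would be full-dimensional in $V_0$, forcing a translate of a solid cone — and hence points of the full-rank lattice $\Lambda$ — into $\intr(S_0)$, a contradiction. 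Next I would build the combinatorial skeleton inside $V$: by Corollary~\ref{cor:Bexists}, $S_0$ is contained in a maximal $\Lambda$-free convex set $\bar S$, and since $\dim(S_0)=\dim(V)$, Theorem~\ref{thm:lattice-free}(i) forces $\bar S=\bar P+\bar L$ to be a polyhedron with $\rec(\bar S)=\lin(\bar S)=\bar L$, finitely many facets $F_1,\dots,F_t$, and a point of $\Lambda$ in $\relint(F_i)$ for each $i$.

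The key structural point is that the outer normal $\bar n_i$ of $F_i$, read as a linear functional on $V_0$, satisfies $\bar n_i\le\psi$ on $V_0$: a facet normal is bounded above by the recession function of $\bar S$, which (by monotonicity of recession functions under inclusion, $S_0\subseteq\bar S$) is bounded above by the recession function of $S_0$, namely $\psi|_{V_0}$. Hence $\bar n_i\in A|_{V_0}=\{a|_{V_0}:a\in A\}$ and extends to some $a_i\in A$. To make the trace of the lifted half-space equal to the facet of $\bar S$, I would use that $\aff(F_i)$ is a supporting hyperplane of $\bar S$ and $\intr(B_\psi)\cap V=\intr_V(S_0)\subseteq\intr_V(\bar S)$, so $\aff(F_i)\cap\intr(B_\psi)=\emptyset$, which gives $\psi\ge 1$ on the coset $\aff(F_i)-f$ and pins down the affine constant of $a_i$. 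Taking $B'=\bigcap_i\{x\in f+W:a_i(x-f)\le 1\}$ then yields $B'\cap V=\bar S$, so $\rec(B'\cap V)=\lin(B'\cap V)=\bar L$; and $B'$ is lattice-free because every integer point of $f+W$ lies in $V$ and, by (\ref{eq:int}) applied to $B'$, would lie in $\intr_V(\bar S)\cap\Lambda=\emptyset$.

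I expect the main obstacle to be exactly this lifting step. To preserve domination one must keep each $a_i$ inside $A$, i.e.\ $a_i\le\psi$ on all of $W$ — including the recession directions of $B_\psi$ transverse to $V$, where $\psi$ may be strictly negative — while the trace of $\{a_i(x-f)\le 1\}$ on $V$ must still reproduce the facet of the lattice-free polyhedron $\bar S$. Reconciling these is the heart of the matter: the sign lemma guarantees the $V_0$-directions are harmless and that completing $\rec(S_0)$ to the subspace $\bar L$ only lowers $\psi$ where it is already nonnegative; the disjointness $\aff(F_i)\cap\intr(B_\psi)=\emptyset$ supplies the lower bound $\psi\ge 1$ along the facet needed to place the constant consistently; and Theorem~\ref{thm:sublinear} is what ultimately underwrites the existence of a dominating polyhedral (finitely generated) under-approximation of $\psi$ with these traces. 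The passage of lattice-freeness from $V$ back to $f+W$ rests throughout on the interior identity (\ref{eq:int}).
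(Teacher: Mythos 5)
Your route inverts the paper's: the paper applies Theorem~\ref{thm:main} to get a maximal lattice-free polyhedron $S$ of $f+W$ containing $B_\psi$ (so the facet correspondence with $S\cap V$ comes for free), normalizes each facet functional $a_i$ so that $\sup_{x\in B_\psi}a_i(x-f)=1$ --- which places $a_i\in\cl(\hat K)$ and hence $a_i\le\psi$ by Claim~\ref{claim:sigma} --- and then shrinks $S$ to $\bar S=\{x\in f+W \st a_i(x-f)\le 1\}$, whose trace on $V$ need not be maximal but inherits $\rec=\lin$ because $\rec(\bar S)=\rec(S)$. You instead build the maximal lattice-free polyhedron $\bar S$ inside $V$ and lift its facets back to $f+W$, i.e.\ you re-prove the lifting content of Theorem~\ref{thm:main} by hand under the extra constraint $a_i\le\psi$. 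Your sign lemma ($\psi\ge 0$ on $V_0$) is correct, and in the case $f\in V$ your plan does go through: there the trace of $a_i$ on $V$ is determined by $a_i|_{V_0}$, the scaling of $\bar n_i$ is forced, domination on $V_0$ follows from $S_0\subseteq\bar S$ together with positive homogeneity, and a finite-dimensional Hahn--Banach dominated extension finishes.

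The gap is in the case $f\notin V$, which is exactly the irrational situation the theorem is about. Prescribing the trace $\{x\in V\st a_i(x-f)\le 1\}=\{x\in V\st \bar n_i\cdot x\le d_i\}$ pins down $a_i$ on the whole $(\dim V+1)$-dimensional subspace $\langle V-f\rangle$, not just on $V_0$: necessarily $a_i(x-f)=1+\mu_i(\bar n_i\cdot x-d_i)$ on $V$ for a single free scalar $\mu_i>0$, so the linear part and the ``affine constant'' cannot be handled independently, as your sketch does (linear part dominated on $V_0$, constant pinned by $\psi\ge 1$ on $\aff(F_i)-f$). A dominated extension to $W$ exists only if $a_i\le\psi$ on all of $\langle V-f\rangle$, which contains, besides $V_0$ and the cone $\R_{>0}(V-f)$, the reflected cone $\R_{>0}(f-V)$; there the requirement reads $1+\mu_i(\bar n_i\cdot y-d_i)\ge-\psi(f-y)$ for every $y\in V$, and on $f-V$ the function $\psi$ may well be negative (this is precisely the regime where $\rec(B_\psi)$ is full-dimensional, which the paper stresses can occur for nontrivial minimal inequalities). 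Your proposal never verifies, or even mentions, this side of the extension. It can in fact be rescued --- nontriviality, the sign lemma and subadditivity give $\psi(f-y)>-1$ for all $y\in V$, and a further convexity computation shows that the constraints on $\mu_i$ coming from the three cones admit a common positive value --- but none of that is in your text, and it sits exactly at the step you yourself flag as the heart of the matter. Relatedly, your justification ``facet normal $\le$ recession function of $\bar S$ $\le$ recession function of $S_0=\psi|_{V_0}$'' is not correct as stated when $f\notin V$: since $f\notin V$, $\psi|_{V_0}$ is not the gauge of $S_0$ based at any point of $V$, and the true inequality $\bar n_i\cdot u\le c\,\psi(u)$ on $V_0$ carries a base-point-dependent constant $c$, which is again the scaling $\mu_i$ you must reconcile with the constant term; Theorem~\ref{thm:sublinear} by itself does not supply this reconciliation.
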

Since $B_\psi$ is a lattice-free convex set, it is contained in some maximal lattice-free convex set $S$ by Corollary~\ref{cor:Bexists}. The set $S$ satisfies one of the statements (i)-(iii) of Theorem~\ref{thm:main}. By Claim~\ref{claim:BcapV}, $\intr(S)\cap V\neq \emptyset$, hence case (iii) does not apply. Case (ii) does not apply because $\dim(S)=\dim(B_\psi)=\dim(W)$. Therefore case (i) applies. Thus $S$ is a polyhedron and $S\cap V$ is a maximal lattice-free convex set in $V$. In particular, by Theorem~\ref{thm:lattice-free}, $\rec(S\cap V)=\lin(S\cap V)$. Since $S$ is a polyhedron containing $f$ in its interior, there exists $A\in\R^{t\times q}$ and $b\in\R^t$ such that $b_i>0$, $i=1,\ldots,t$, and $S=\{x\in f+W\st A(x-f)\leq b\}$. Without loss of generality, we may assume that $\sup_{x\in B_\psi} a_i (x-f)=1$ where $a_i$ denotes the $i$th row of $A$, $i=1,\ldots, t$. By our assumption, $\sup_{r\in K} a_i r=1$. Therefore  $a_i\in K^*$, since $a_i r\leq 1$ for all $r\in K$. Furthermore $a_i\in\cl(\hat K)$, since $\sup_{r\in K} a_i r=1$.

Let $\bar S=\{x\in f+W\st A(x-f)\leq e\}$, where $e$ denotes the vector of all ones. Then $B_\psi\subseteq \bar S\subseteq S$.
Let $Q=\{r\in W\st Ar\leq e\}$. By Remark~\ref{rmk:K-poly},
$\rho_{Q}(r)=\max_{i=1,\ldots,t}a_ir$ for all $r\in W$. Since $\bar S\subseteq S$, $\bar S$ is lattice-free, by Remark~\ref{rmk:valid} the inequality $\sum_{r\in W} \rho_{Q}(r)s_r\geq 1$ is valid for $R_f$. Furthermore, since $\{a_1,\ldots,a_t\}\subset \cl(\hat K)$, by Claim~\ref{claim:sigma} we have
$$\psi(r)=\sup_{y\in \hat K}y r\geq \max_{i=1,\ldots,t}a_i r=\rho_{Q}(r)$$
for all $r\in W$.  Let $\psi'=\rho(Q)$. Note that $B_{\psi'}=\bar S$. So, $\rec(B_{\psi'})=\rec(\bar S) =
\{r\in W\st Ar\leq 0\}=\rec(S)$. Since $\rec(S\cap V)=\lin(S\cap
V)$, then $\rec(B_{\psi'}\cap V)=\lin(B_{\psi'}\cap V)$.\hfill $\diamond$ \medskip

By Claim~\ref{claim:Bpoly}, we may assume that $B_\psi=\{x\in f+W\st A(x-f)\leq e\}$, where $A\in \R^{t\times q}$ and $e$ is the vector of all ones, and that $\rec(B_{\psi}\cap V)=\lin(B_{\psi}\cap V)$. Let $a_1,\ldots, a_t$ denote the rows of $A$. By Claim~\ref{claim:sigma} and Remark~\ref{rmk:K-poly}, \begin{equation}\label{eq:psi}\psi(r)=\max_{i=1,\ldots,t}a_i r,\quad \mbox{for all } r\in W.\end{equation}
Let $G$ be a matrix such that $W=\{r\in\R^q\st Gr=0\}$.

\begin{claim}\label{claim:lambda} There exists $\lambda\in\R^\ell$ such that $\psi(r)+\lambda^T C r\geq 0$ for all $r\in W$.
\end{claim}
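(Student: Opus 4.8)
The plan is to reduce Claim~\ref{claim:lambda} to the existence of a convex combination of the rows $a_1,\dots,a_t$ that, viewed as a linear functional on $W$, vanishes on the direction space of $V$, and then to produce such a combination from the hypothesis $\rec(B_\psi\cap V)=\lin(B_\psi\cap V)$ by a theorem of the alternative.

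Write $V_0=\{r\in W\st Cr=0\}$ for the linear space parallel to $V$. The first step is the following reduction: it suffices to find multipliers $\theta_1,\dots,\theta_t\ge 0$ with $\sum_i\theta_i=1$ such that the functional $h(r):=\sum_i\theta_i a_i r$ vanishes identically on $V_0$. Indeed, the linear functionals on $W$ that vanish on $V_0=\ker(C|_W)$ are exactly those of the form $r\mapsto \nu^T Cr$: they constitute the annihilator $V_0^\bot$ in the dual of $W$, which is the image of the transpose of $C|_W$. Hence $h(r)=\nu^T Cr$ on $W$ for some $\nu\in\R^\ell$, and setting $\lambda:=-\nu$ together with $\psi(r)=\max_i a_ir\ge \sum_i\theta_i a_ir=h(r)$ from~(\ref{eq:psi}) gives $\psi(r)+\lambda^T Cr=\psi(r)-h(r)\ge 0$ for all $r\in W$, which is the claim.

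The second step translates the recession-cone hypothesis into a solvability statement. Since $\rec(B_\psi)=\{r\in W\st a_ir\le 0,\ i=1,\dots,t\}$ and $V$ has direction space $V_0$, one computes $\rec(B_\psi\cap V)=\{r\in V_0\st a_ir\le 0\ \forall i\}$ and $\lin(B_\psi\cap V)=\{r\in V_0\st a_ir=0\ \forall i\}$. The assumption that these two cones coincide says exactly that there is no $r\in V_0$ with $a_ir\le 0$ for all $i$ and $a_jr<0$ for some $j$; in particular there is no $r\in V_0$ with $a_ir<0$ for all $i$.

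The final step applies Gordan's theorem of the alternative to the restrictions $\bar a_i:=a_i|_{V_0}$ in the (finite-dimensional) dual of $V_0$: infeasibility of the strict system $\bar a_i r<0$ $(i=1,\dots,t)$ over $r\in V_0$ yields nonnegative multipliers $\theta_1,\dots,\theta_t$, not all zero, with $\sum_i\theta_i\bar a_i=0$, and after normalizing $\sum_i\theta_i=1$ this is precisely the convex combination vanishing on $V_0$ required by the reduction. The step where care is needed is the duality bookkeeping of the first paragraph---identifying the functionals on $W$ that kill $V_0$ with the functionals $\nu^T C$, and keeping the ambient space $W$ (equivalently $Gr=0$) fixed throughout, rather than working in all of $\R^q$; by contrast the alternative argument, which is where the hypothesis $\rec(B_\psi\cap V)=\lin(B_\psi\cap V)$ is actually consumed, is routine once the reduction is set up.
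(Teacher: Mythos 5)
Your proof is correct, and while it rests on the same skeleton as the paper's --- translate the hypothesis $\rec(B_\psi\cap V)=\lin(B_\psi\cap V)$ into the infeasibility of a homogeneous system of directions in the space $V_0=\{r\in W \st Cr=0\}$, then apply a theorem of the alternative to extract multipliers --- the tools differ enough to be worth comparing. The paper works in ambient coordinates: it uses LP duality to show that the desired $\lambda$ exists if and only if the system $A^Ty+C^T\lambda-G^T\mu=0$, $y\geq 0$, $y\neq 0$ of (\ref{eq:lambda}) is feasible, and it then produces a solution by applying the Farkas lemma to the infeasible system $Ar\leq 0$, $Cr=0$, $Gr=0$, $e^TAr=-1$. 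You replace the LP-duality step by elementary linear algebra (a convex combination $\sum_i\theta_i a_i$ vanishing on $V_0$ lies under $\psi=\max_i a_i$ and, by the annihilator identification $(\ker C|_W)^\bot=\mathrm{im}\,(C|_W)^T$, equals some $\nu^T C$ on $W$), and you replace Farkas by Gordan's theorem applied to the restrictions $a_i|_{V_0}$; indeed your multipliers $\theta$ are exactly a solution $y$ of (\ref{eq:lambda}) normalized to $e^Ty=1$, with the terms $C^T\lambda$ and $G^T\mu$ subsumed in the annihilator. Your route avoids introducing $G$ and any LP machinery, and it also reveals that the claim needs less than the stated hypothesis: Gordan consumes only the infeasibility of the strict system $a_ir<0$, $r\in V_0$, whereas the paper's Farkas certificate encodes the stronger statement that no $r\in V_0$ satisfies $Ar\leq 0$, $Ar\neq 0$. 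What the paper's version buys in exchange is that it stays entirely within the LP-duality/Farkas toolkit used elsewhere and delivers the certificate $(y,\lambda,\mu)$ explicitly in the coordinates $A$, $C$, $G$ that the remainder of the proof of Theorem~\ref{thm:min-ineq-intr} manipulates.
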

Given $\lambda\in\R^\ell$, then by (\ref{eq:psi}) $\psi(r)+\lambda^T
C r\geq 0$ for every $r\in W$ if and only if $\min_{r\in W}
(\max_{i=1,\ldots,t}a_i r + \lambda^TCr)=0$.  The latter holds if
and only if
$$0=\min\{z+\lambda^TCr\st ez-Ar\geq 0,\, Gr=0\}.$$ By LP duality,
this holds if and only if the following system is feasible
\begin{eqnarray*}
ey&=&1\\
A^Ty+C^T\lambda-G^T\mu&=&0\\
y&\geq& 0.
\end{eqnarray*}
Clearly the latter is equivalent to
\begin{eqnarray}\label{eq:lambda}
A^Ty+C^T\lambda-G^T\mu &=&0\\
y\geq 0,\,
y \neq 0.&&\nonumber
\end{eqnarray}
Note that $\rec(B_\psi\cap V)=\{r\in \R^q\st Ar\leq 0,\, Cr=0, Gr=0\}$ and $\lin(B_\psi\cap V)=\{r\in \R^q\st Ar= 0,\, Cr=0, Gr=0\}$.
Since $\rec(B_\psi\cap V)=\lin(B_\psi\cap V)$, the system
\begin{eqnarray*}
Ar&\leq& 0\\
Cr&=&0\\
Gr&=&0\\
e^TAr&=&-1
\end{eqnarray*}
is infeasible. By Farkas Lemma, this is the case if and only if there exists $\gamma\geq 0$, $\lambda$, $\tilde\mu$, and $\tau$ such that
$$
A^T\gamma+C^T\lambda+G^T\tilde\mu+A^Te\tau=0,\quad \tau>0.
$$
If we let $y=\gamma+e\tau$ and $\mu =-\tilde \mu$, then $(y,\lambda,\mu)$ satisfies~(\ref{eq:lambda}). By the previous argument, $\lambda$ satisfies the statement of the claim.\hfill $\diamond$\medskip

Let $\lambda$ as in Claim~\ref{claim:lambda}, and let $\psi'$ be the function defined by $\psi'(r)=\psi(r)+\lambda^TCr$ for all $r\in W$. So $\psi'(r)\geq 0$ for every $r\in W$. Let $\alpha'=1+\lambda^T(d-Cf)$. Then the inequality $\Psi'(s)\geq \alpha'$ is valid for $R_f$ and it is equivalent to $\Psi(s)\geq \alpha$. If $\alpha'\leq 0$, then $\Psi'(s)\geq \alpha'$ is trivial. Thus  $\alpha'>0$. Let $\rho=1/{\alpha'}$ and let $\psi''=\rho\psi'$. Then $\Psi''(s)\geq 1$ is equivalent to $\Psi(s)\geq 1$. By Lemma~\ref{lemma:equiv}(i), $\psi''$ is sublinear.

Let $B$ be a maximal lattice-free convex set of $f+W$ containing $B_{\psi''}$. Such a set $B$ exists by Corollary~\ref{cor:Bexists}.

\begin{claim}\label{claim:psiB} $\psi''(r)\geq\psi_B(r)$ for all $r\in W$.\end{claim}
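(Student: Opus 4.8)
The plan is to exploit the two facts that have just been arranged: $\psi''$ is sublinear and \emph{nonnegative} with $\Psi''(s)\ge 1$ valid, and $B$ is a maximal lattice-free convex set of $f+W$ with $f$ in its interior that \emph{contains} $B_{\psi''}$. Since $\psi''\ge 0$, Remark~\ref{rmk:gauge} lets me write $\psi''$ as the gauge of $B_{\psi''}-f$, namely $\psi''(r)=\inf\{t>0\st f+t^{-1}r\in B_{\psi''}\}$ for every $r\in W$. First I would record that, by Theorem~\ref{thm:main}, $B$ is a polyhedron, so there exist $a_1,\dots,a_t$ with $B=\{x\in f+W\st a_i(x-f)\le 1,\ i=1,\dots,t\}$; by the definition~\eqref{eq:psi_B} (equivalently Remark~\ref{rmk:K-poly}) the associated function is $\psi_B(r)=\max_{i=1,\dots,t}a_i r$.

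The core of the argument is a monotonicity of the gauge under the inclusion $B_{\psi''}\subseteq B$. Since $f+t^{-1}r\in B_{\psi''}$ implies $f+t^{-1}r\in B$, the set of admissible $t$ only grows when $B_{\psi''}$ is replaced by $B$, so
$$\psi''(r)=\inf\{t>0\st f+t^{-1}r\in B_{\psi''}\}\ \ge\ \inf\{t>0\st f+t^{-1}r\in B\}.$$
I would then evaluate the right-hand infimum explicitly: for $t>0$ the condition $f+t^{-1}r\in B$ is equivalent to $a_i r\le t$ for all $i$, i.e. to $\psi_B(r)\le t$. Hence $\{t>0\st f+t^{-1}r\in B\}=\{t>0\st t\ge\psi_B(r)\}$, whose infimum is $\max\{\psi_B(r),0\}$. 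Combining the two displays yields $\psi''(r)\ge\max\{\psi_B(r),0\}\ge\psi_B(r)$ for every $r\in W$, which is exactly the claim.

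If one prefers to avoid the infimum manipulation, the same conclusion follows from a two-case argument. When $\psi''(r)>0$, positive homogeneity gives $\psi''(t^{-1}r)=1$ with $t=\psi''(r)$, so $f+t^{-1}r$ lies on $\bd(B_{\psi''})\subseteq B$, whence $a_i(t^{-1}r)\le 1$ for all $i$ and therefore $\psi_B(r)\le t=\psi''(r)$. When $\psi''(r)=0$, nonnegativity of $\psi''$ forces $r\in\rec(B_{\psi''})\subseteq\rec(B)=\{r\in W\st \psi_B(r)\le 0\}$, so $\psi_B(r)\le 0=\psi''(r)$.

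The only genuinely delicate point — and the one place where the preceding work in the proof pays off — is the behaviour on the recession cone of $B$, where $\psi_B$ may be negative while $\psi''$ is forced nonnegative. This is precisely why the evaluated infimum is $\max\{\psi_B(r),0\}$ rather than $\psi_B(r)$, and hence why the claim is a one-sided bound $\psi''\ge\psi_B$ rather than an equality; the nonnegativity of $\psi''$, which was secured via Claim~\ref{claim:lambda}, is exactly what makes the recession-cone case go through.
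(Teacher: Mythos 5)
Your proposal is correct and is essentially the paper's own argument: the paper proves the claim by exactly your two-case split ($r\notin\rec(B_{\psi''})$, giving $f+\tau r\in\bd(B_{\psi''})\subseteq B$ and hence $\psi_B(r)\leq\tau^{-1}=\psi''(r)$; and $r\in\rec(B_{\psi''})\subseteq\rec(B)$, where nonnegativity of $\psi''$ gives $\psi_B(r)\leq 0=\psi''(r)$). Your gauge-monotonicity phrasing is just a repackaging of the same inclusion argument, with $\max\{\psi_B(r),0\}$ correctly accounting for the recession cone.
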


Let $r\in \rec(B_{\psi''})$. Since $\psi''$ is nonnegative,  $\psi''(r)=0$. Since $\rec(B_{\psi''})\subseteq\rec(B)$, $\psi_{B} (r)\leq 0=\psi''(r)$. Let $r\notin \rec(B_{\psi''})$. Then $f+\tau r\in\bd(B_{\psi''})$ for some $\tau>0$, hence $\psi''(\tau r)=1$ and, by positive homogeneity, $\psi''(r)=\tau^{-1}$. Because $B_{\psi''}\subset B$, $f+\tau r\in B$. Since $B=\{x\in f+W\st \psi_B(x-f)\leq 1\}$, it follows that $\psi_B(\tau r)\leq 1$, implying $\psi_{B}(r)\leq \tau^{-1}=\psi''(r)$. \hfill $\diamond$ \medskip
\medskip

Claim~\ref{claim:psiB} shows that $\Psi''(s)\geq 1$ is dominated by $\Psi_B(s)\geq 1$, which is minimal by Lemma~\ref{rem:psi_rho}. By Lemma~\ref{lemma:equiv}(ii), $\Psi(s)\geq 1$ is dominated by a minimal valid linear inequality which is equivalent to $\Psi_B(s)\geq 1$.
\end{proof}

\vspace{.1in}

\noindent{\em Example. }
We illustrate the end of the proof in an example. Suppose $W=\{x\in\R^3\st x_2+\sqrt{2}  x_3=0\}$, and let $f=(\frac 12, 0, 0)$. Note that $f+W=W$. All integral points in $W$ are of the form $(k,0,0)$, $k\in\Z$, hence $V=\{x\in W\st x_2=0\}$. Thus $\V=\{s\in \W\st \sum_{r\in W}r_2s_r=0\}$.

Consider the function $\psi\,:\,W\rightarrow \R$ defined by
$\psi(r)=\max\{-4r_1-4 r_2,4r_1-4r_2\}$. The set $B_\psi=\{x\in W\st
-4(x_1-\frac 1 2)-4 x_2\leq 1,\, 4(x_1-\frac 1 2)-4 x_2\leq 1\}$
does not contain any integral point, hence  $\Psi(s)\geq 1$ is valid
for $R_f$. Note that $B_\psi$ is not maximal (see
Figure~\ref{fig:example}).

Setting $\lambda=4$ in Claim~\ref{claim:lambda}, let
$\psi'(r)=\psi(r)+\lambda r_2$ for all $r\in W$. Note that
$\psi'(r)=\max\{-4r_1,4r_1\}\geq 0$ for all $r\in W$. The set
$B_{\psi'}=\{x\in W\st -4(x_1-\frac 1 2)\leq 1,\, 4(x_1-\frac 1
2)\leq 1\}$ is contained in the maximal lattice-free convex set
$B=\{x\in W\st -2(x_1-\frac 1 2)\leq 1,\, 2(x_1-\frac 1 2)\leq 1\}$,
hence $\psi'$ is pointwise larger than the function $\psi_B$ defined
by $\psi_B(r)=\max\{-2r_1,2r_1\}$ and $\Psi_B(s)\geq 1$ is valid for
$R_f$. This completes the illustration of the proof.

\begin{figure}[htbp]\begin{center}
\includegraphics[scale=0.65]{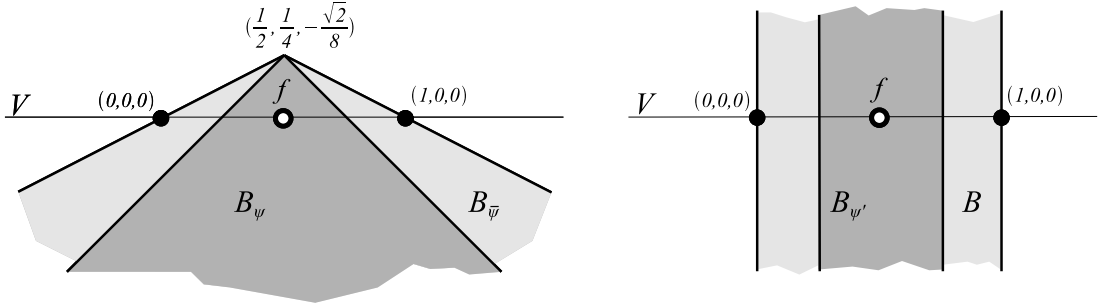} 
\caption{\label{fig:example} Lattice-free sets in the 2-dimensional
space $W$.}
\end{center}
\end{figure}

Note that $\Psi_B(s) \geq 1$ does not dominate $\Psi(s) \geq 1$.
However the inequality $\Psi_B(s)\geq 1$ is equivalent to a valid
inequality $\overline\Psi(s) \geq 1$ which dominates $\Psi(s) \geq
1$. We show how to construct $\bar\psi$ in our example. The function
$\bar \psi$ is defined by $\psi_B(r)-\lambda r_2$ for all $r\in W$
is pointwise smaller than $\psi$ and $\bar\Psi(s)\geq 1$ is valid
for $R_f$. Moreover, $B_{\bar\psi}=\{x\in W\st -2(x_1-\frac 1
2)-4x_1\leq 1,\, 2(x_1-\frac 1 2)-4x_1\leq 1\}$ is a maximal
lattice-free convex set containing $B_\psi$. Note that the recession
cones of $B_{\psi}$ and $ B_{\bar\psi}$ are full dimensional, hence
$\psi$ and $\bar \psi$ take negative values on elements of the
recession cone. For example $\psi(0,-1, \frac 1 {\sqrt{2}})=\bar
\psi(0,-1, \frac 1 {\sqrt{2}})=-4$. The recession cones of
$B_{\psi'}$ and $B$ coincide and are not full dimensional, thus
$\psi'(0,-1, \frac 1 {\sqrt{2}})=\psi_B(0,-1, \frac 1
{\sqrt{2}})=0$, since the vector $(0,-1, \frac 1 {\sqrt{2}})$ is in
the recession cone of $B$.

\section{The intersection of all minimal inequalities}\label{sec:closure}

In this section we prove Theorem~\ref{thm:closure}. First we need the following.

\begin{lemma}\label{lemma:Psi}
Let $\psi\,:\,W\rightarrow \R$ be a continuous function that is
positively homogeneous. Then the function $\Psi\,:\,\W\rightarrow \R$, defined by $\Psi(s)=\sum_{r\in W}\psi(r)s_r$, is continuous with respect to
$(\W,\|\cdot\|_H)$.
\end{lemma}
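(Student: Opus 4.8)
The plan is to exploit linearity: since $\Psi$ is a linear functional on the normed space $(\W,\|\cdot\|_H)$, it suffices to show that $\Psi$ is bounded, i.e.\ that there is a constant $M$ with $|\Psi(s)|\leq M\|s\|_H$ for all $s\in\W$. Indeed, boundedness gives $|\Psi(s)-\Psi(s')|=|\Psi(s-s')|\leq M\|s-s'\|_H$, so $\Psi$ is Lipschitz and hence continuous. Note that every sum involved is finite because elements of $\W$ have finite support, so there is no convergence issue to worry about.

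First I would extract the constant $M$ from $\psi$. Positive homogeneity applied with $\lambda=0$ gives $\psi(0)=0$, so the index $r=0$ contributes nothing to $\Psi(s)$. For the remaining directions, I would restrict $\psi$ to the unit sphere $\Sigma=\{r\in W\st \|r\|=1\}$. Since $W$ is a finite-dimensional subspace of $\R^n$, $\Sigma$ is compact, and $\psi$ is continuous by hypothesis, so $M:=\sup_{r\in\Sigma}|\psi(r)|$ is finite. Positive homogeneity then upgrades this to a global bound: for every $r\neq 0$,
$$|\psi(r)|=\|r\|\,\left|\psi\!\left(\tfrac{r}{\|r\|}\right)\right|\leq M\|r\|.$$

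With this bound in hand, the estimate is immediate. Using $\psi(0)=0$ and the triangle inequality,
$$|\Psi(s)|=\left|\sum_{r\in W}\psi(r)s_r\right|\leq\sum_{r\in W\sm\{0\}}|\psi(r)|\,|s_r|\leq M\sum_{r\in W\sm\{0\}}\|r\|\,|s_r|\leq M\|s\|_H,$$
where the last inequality follows because $\|s\|_H=|s_0|+\sum_{r\neq 0}\|r\|\,|s_r|$ already contains the sum on its right-hand side (the extra term $|s_0|$ only helps). This proves boundedness and hence continuity.

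There is essentially no hard step here: the argument is a standard "bounded linear functional'' computation. The only two points that deserve a word of care are (i) that the index $r=0$ must be separated out and killed using $\psi(0)=0$, since the $\|\cdot\|_H$-weight of that coordinate is $1$ rather than $\|0\|=0$; and (ii) that the compactness of $\Sigma$ — and thus the finiteness of $M$ — relies on $W$ being finite-dimensional, which is exactly why continuity can fail for general linear functionals in the infinite-dimensional space $\W$ (as the $\mathbf 0$-separation example in the introduction illustrates). The tailored norm $\|\cdot\|_H$ is precisely what makes the weights match the homogeneity of $\psi$.
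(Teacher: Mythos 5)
Your proof is correct and follows essentially the same route as the paper: both arguments take the supremum of $|\psi|$ over the unit sphere of $W$ (finite by compactness and continuity), use positive homogeneity to convert this into the bound $|\psi(r)|\leq M\|r\|$, and then compare term by term with the weighted norm $\|\cdot\|_H$ to get a Lipschitz estimate for the linear functional $\Psi$. Your treatment is in fact slightly cleaner in two spots where the paper glosses: you explicitly dispose of the $r=0$ coordinate via $\psi(0)=0$, and you state the reduction from continuity to boundedness of a linear functional rather than estimating $|\Psi(s')-\Psi(s)|$ directly.
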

\begin{pf}
Define $\gamma=\sup\{|\psi(r)|\,:\, r\in W,\,\|r\|=1\}$. Since the
set $\{r\in R_f(W)\,:\,\|r\|=1\}$ is compact and $\psi$ is
continuous, $\gamma$ is well defined (that is, it is finite). Given
$s,s'\in\W$, we will show $|\Psi(s')-\Psi(s)|\leq \gamma
\|s'-s\|_H$, which implies that $\Psi$ is continuous. Indeed
\begin{eqnarray*}
|\Psi(s')-\Psi(s)|&=&|\sum_{r\in W} \psi(r) (s'_r-s_r)|\\
&\leq & \sum_{r\in W} |\psi(r)|\, |s'_r-s_r|\\
&=& \sum_{r\in W\,:\,\|r\|=1}\sum_{\alpha>0}|\psi(\alpha r)| \, |s'_{\alpha r}-s_{\alpha r}|\\
&=& \sum_{r\in W\,:\,\|r\|=1}|\psi(r)|\sum_{\alpha>0}\alpha |s'_{\alpha r}-s_{\alpha r}|\quad\mbox{(by positive homogeneity of }\psi)\\
&\leq& \gamma \sum_{r\in W\,:\,\|r\|=1}\sum_{\alpha>0}\alpha |s'_{\alpha r}-s_{\alpha r}|\\
&=& \gamma \sum_{r\in W\backslash \{0\}} \|r\| |s'_r-s_r|\quad\leq \,\gamma \|s'-s\|_H
\end{eqnarray*}
\end{pf}

\begin{proof}[Proof of Theorem~\ref{thm:closure}]
``$\subseteq$'' By Lemma~\ref{lemma:Psi}, $\Psi_B$ is continuous in
$(\W,\|\cdot\|_H)$ for every $B\in\mathcal{B}_W$,
therefore $\{s\in\W\,:\, \Psi_B(s)\geq 1\}$ is a closed
half-space of  $(\W,\|\cdot\|_H)$. It is immediate to
show that also $\{s\in\W\,:\, s_r\geq 0,\, r\in W\}$ is a
closed set in $(\W,\|\cdot\|_H)$. Since $\V=\{s\in\W\st \sum_{r\in W}(Cr)s_r=d-Cf\}$, and since for each row $c_i$ of $C$ the function $r\mapsto c_i r$ is positive homogeneous, then by Lemma~\ref{lemma:Psi} $\V$ is also closed.
Thus
$$\{s\in\V\,:\, \Psi_B(s)\ge 1 , B\in\mathcal{B}_W;\;
s_r\geq 0,\,  r\in W\}$$ is an intersection of closed sets, and is
therefore a closed set of  $(\W,\|\cdot\|_H)$. Thus,
since it contains $\conv(R_f(W))$, it also contains
$\ol\conv(R_f(W))$.
\medskip

\noindent ``$\supseteq$'' We only need to show that, for every
$\bar{s}\in\V$ such that $\bar s\notin\ol\conv(R_f(W))$
and $\bar s_r\geq 0$ for every $r\in W$, there exists $B\in\mathcal{B}_W$
such that $\sum_{r\in W}\psi_B(r)\bar s_r< 1$.

The theorem of Hahn-Banach implies the following. \begin{quote}\sl
Given a closed convex set $A$ in $(\W,\|\cdot\|_H)$ and a
point $b\notin A$, there exists a continuous linear function
$\Psi\,:\,\W\rightarrow \R$ that strictly separates $A$
and $b$, i.e. for some $\alpha\in \R$, $\Psi(a)\geq \alpha$ for
every $a\in A$, and $\Psi(b)< \alpha$.\end{quote}

Therefore, there exists a linear function
$\Psi\,:\,\W\rightarrow \R$ such that $\Psi(\bar
s)<\alpha$ and $\Psi(s)\geq \alpha$ for every $s\in\ol\conv(R_f)$.
By the first part of Theorem~\ref{thm:min-ineq-intr}, we may assume that $\Psi(s)\geq \alpha$ is a nontrivial minimal
valid linear inequality. By the second part of Theorem~\ref{thm:min-ineq-intr}, this
inequality is equivalent to an inequality of the form $\sum_{r\in
W}\psi_B(r)s_r\ge 1$ for some maximal lattice-free convex set $B$ of
$W$ with $f$ in its interior.
\end{proof}

\section{Proof of Theorem~\ref{thm:extreme}}\label{sec:extreme}

By Theorem~\ref{thm:lattice-free}, $L:=\lin(B)$ is a rational space
and $P$ is a polytope. Also, by construction, $\psi_B(r^j)=1$ for
$j=1,\ldots,k$, and $\psi_B(r^j)=0$ for $j=k+1,\ldots,k+h$. Thus
$\sum_{j=1}^k s_{j}\geq 1$ is a valid inequality for
$R_f(r^1,\ldots,r^{k+h})$. We recall that,  by
Theorem~\ref{thm:min-ineq-intr}, $\sum_{r\in\R^q}\psi_B(r)s_r\geq 1$
is a minimal valid inequality for $R_f(\R^q)$.\medskip

We first show that $R_f(r^1,\ldots,r^{k+h})$ is nonempty. Since
$r^{k+1},\ldots,r^{k+h}$ are rational, there exists a positive
integer $N$ such that $Nr^j$ is integral for $j=k+1,\ldots,k+h$.
Since $f\in\intr(B)$, it follows that, for every $r\in\R^q$, there
exists $s\in\R^{k+h}$ such that $r=\sum_{j=1}^{k+h}r^js_j$ and
$s_j\geq 0$, $j=1,\ldots,k$. Thus, given $\bar x\in\Z^n$, there
exists $\bar s$ such that $\bar x-f=\sum_{j=1}^{k+h}r^j\bar s_j$ and
$s_j\geq 0$, $j=1,\ldots,k$. Let $\lambda$ be a positive integer
such that $\bar s+\lambda N\sum_{j=k+1}^{k+h}e^j\geq 0$, where $e_j$
denotes the $j$th unit vector in $\R^{k+h}$. Then $\bar s+\lambda
N\sum_{j=k+1}^{k+h}e^j\geq 0\in R_f(r^1,\ldots,r^{k+h})$.
\bigskip

``$\Leftarrow$'' Let us assume that $\sum_{j=1}^k s_{j}\geq 1$ is an
extreme inequality for $R_f(r^1,\ldots,r^{k+h})$. We show that
$\sum_{r\in\R^q}\psi_B(r)s_r\geq 1$ is extreme for $R_f(\R^q)$.

Let $\sum_{r\in \R^q}\psi_i(r)s_r\geq 1$, $i=1,2$, be valid
inequalities for $R_f(\R^q)$ such $\psi_B \geq \frac 12
(\psi_1+\psi_2)$. We will show that $\psi_B=\psi_1=\psi_2$. Since
$\psi_B$ is minimal and $\psi_B \geq \frac 12 (\psi_1+\psi_2)$, then
$\psi_1,\,\psi_2$ are both minimal. Thus, given $B_i=\{x\in
\R^q\,:\, \psi_i(x-f)\leq 1\}$, $i=1,2$, $B_1$ and $B_2$ are maximal
lattice-free convex sets. Furthermore, since $\sum_{j=1}^k s_j\geq
1$ is extreme for $R_f(r^1,\ldots,r^{k+h})$, then
$\psi_1(r^j)=\psi_2(r^j)=\psi_B(r^j)$ for $j=1,\ldots,k+h$. In
particular, $\psi_1(r^j)=\psi_2(r^j)=1$ for $j=1,\ldots,k$ and
$\psi_1(r^j)=\psi_2(r^j)=0$ for $j=k+1,\ldots,k+h$. Hence $B_1$ and
$B_2$ contain $B$, since they contain the vertices of $P$ and their
lineality space contains $r^{k+1},\ldots,r^{k+h}$. By the maximality
of $B$, $B_1=B_2=B$, therefore $\psi_B=\psi_1=\psi_2$, proving that
$\sum_{r\in \R^q}\psi_B(r)s_r\geq 1$ is extreme.
\medskip

``$\Rightarrow$'' Let us assume that  $\sum_{r\in
\R^q}\psi_B(r)s_r\geq 1$ is an extreme inequality for $R_f(\R^q)$.
We prove that $\sum_{j=1}^k s_{j}\geq 1$ is an extreme inequality
for $R_f(r^1,\ldots,r^{k+h})$.

Let $\alpha,\, \beta\in\R^{k+h}$ be vectors such that $\alpha s\geq
1$ and $\beta s\geq 1$ are valid for $R_f(r^1,\ldots,r^{k+h})$, and
$\frac 1 2 (\alpha_j+\beta_j) \leq \psi_B(r^j)$, $j=1,\ldots,k+h$.
We will show that it must follow that $\alpha_j=\beta_j=\psi_B(r^j)$
for $j=1,\ldots,k+h$. \medskip

We first observe that, for $j=k+1,\ldots,k+h$, $\alpha_j=\beta_j=0$.
If not, since $\frac 12 (\alpha_j+\beta_j)\leq \psi_B(r^j)=0$ for
$j=k+1,\ldots,k+h$, then we may assume that $\alpha_j<0$ for some
$j\in\{k+1,\ldots,k+h\}$. Given $\bar s\in R_f(r^1,\ldots,r^{k+h})$
it now follows that $\bar s+\lambda N e^j\in R_f(r^1,\ldots,r^{k+h})$ for every positive integer $\lambda$.
However, $\lim_{\lambda\rightarrow +\infty}\alpha (\bar s+\lambda N
e^j)=\alpha\bar s+\lim_{\lambda\rightarrow +\infty}\lambda N
\alpha_j=-\infty$, contradicting the fact that $\alpha s\geq 1$ is
valid for $R_f(r^1,\ldots,r^{k+h})$.

Define, for every $r\in\R^q$,
\begin{equation}\label{eq:psi-alpha}\psi^{\alpha}(r)=\min \{\alpha
s\st\sum_{j=1}^{k+h}r^j s_j=r,\,s_j\geq 0, \,
j=1,\ldots,k\}.\end{equation}

Note that, for every $r\in\R^q$, the above linear program is
feasible. We also observe that, for every $\bar x\in\Z^q$,
$\psi^\alpha(\bar x-f)\geq 1$. Indeed, given $\bar s\in\R^{k+h}$
such that $\psi^\alpha(\bar x-f)=\alpha \bar s$ and
$\sum_{j=1}^{k+h}r^j \bar s_j=\bar x-f$, $s_j\geq 0$ for
$j=1,\ldots,k$, then there exists a positive integer $\lambda$ such
that $\tilde s=\bar s+\lambda N\sum_{j=k+1}^{k+h} e^j\in
R_f(r^1,\ldots,r^{k+h})$. Since $\alpha_j=0$, $j=k+1,\ldots,k+h$,
then $\psi^\alpha(\bar x-f)=\alpha \bar s=\alpha \tilde s\geq 1$.

The above fact also implies that the linear
program~\eqref{eq:psi-alpha} admits a finite optimum for every
$r\in\R^q$.

We show that $\sum_{r\in\R^q}\psi^\alpha(r)s_r\geq 1$ is a valid
inequality for $R_f(\R^q)$. The function $\psi^\alpha$ is sublinear
(the proof is similar to the one of Lemma~\ref{lemma:sublinear}).
Therefore, for every $s\in R_f(\R^q)$, given $\bar
x=f+\sum_{r\in\R^q}r s_r$, it follows that
$$\sum_{r\in\R^q}\psi^\alpha(r)s_r\geq \psi^\alpha(\bar x-f)\geq 1.$$

We may define $\psi^\beta$ similarly. It now follows that the sets
$B_{\psi^{\alpha}}$ and $B_{\psi^{\beta}}$ are lattice-free convex
sets with $f$ in their interior.

By definition, $\psi^\alpha(r^j)\leq \alpha_j$ and
$\psi^\beta(r^j)\leq \beta_j$, $j=1,\ldots,k+h$.

Let $\psi=\frac 12 (\psi^\alpha+\psi^\beta)$. We will show that
$\psi= \psi_B$. Indeed, $\psi(r^j)\leq\frac 12
(\alpha_j+\beta_j)\leq \psi_B(r^j)$, $j=1,\ldots,k+h$. Thus
$\psi(r^j)\leq 1$ for $j=1,\ldots,k$ and $\psi(r^j)\leq 0$ for
$j=k+1,\ldots,k+h$. In particular, $f+r^1,\ldots,f+r^k\in B_\psi$
and $r^{k+1},\ldots, r^{k+h}\in\rec(B_\psi)$. Thus $B_\psi\supseteq
B$. Since $\psi$ is a convex combination of $\psi^\alpha$ and
$\psi^\beta$, it follows that $\sum_{r\in\R^q}\psi(r)s_r\geq 1$ is a
valid inequality for $R_f(\R^q)$. Thus $B_\psi$ is a lattice-free
convex set. Since $B$ is maximal, it follows that $B_\psi=B$. Hence
$\psi=\psi_B$.

Since $\psi_B$ is extreme, it follows that
$\psi^\alpha=\psi^\beta=\psi_B$. Hence
$\alpha_j=\beta_j=\psi_B(r^j)$, $j=1,\ldots,k+h$. \hfill $\Box$

\small

\end{document}